\pdfoutput=1
\documentclass[10pt]{article}
\usepackage{amssymb}
\usepackage{graphicx}
\usepackage{xcolor} 
\usepackage{tensor}
\usepackage{fullpage} 
\usepackage{amsmath}
\usepackage{amsthm}
\usepackage{verbatim}
\usepackage{mathrsfs}
\usepackage{hyperref}
\usepackage{authblk}

\newcommand*{\avint}{\mathop{\ooalign{$\int$\cr$-$}}}

\usepackage{scalerel,stackengine}
\stackMath
\newcommand\wwhat[1]{%
\savestack{\tmpbox}{\stretchto{%
  \scaleto{%
    \scalerel*[\widthof{\ensuremath{#1}}]{\kern-.6pt\bigwedge\kern-.6pt}%
    {\rule[-\textheight/2]{1ex}{\textheight}}
  }{\textheight}%
}{0.5ex}}%
\stackon[1pt]{#1}{\tmpbox}%
}
\usepackage{enumitem}
\setlist[enumerate]{leftmargin=1.5em}
\setlist[itemize]{leftmargin=1.5em}

\usepackage{seqsplit}

\definecolor{green}{rgb}{0,0.8,0} 

\newtheorem{maintheorem}{Theorem}

\newtheorem{theorem}{Theorem}[section]
\newtheorem{lemma}[theorem]{Lemma}
\newtheorem{proposition}{Proposition}[section]

\theoremstyle{definition}
\newtheorem{definition}[maintheorem]{Definition}

\theoremstyle{remark}
\newtheorem{remark}[proposition]{Remark}

\numberwithin{equation}{section}
\newcommand{\RN}[1]{%
  \textup{\uppercase\expandafter{\romannumeral#1}}%
}

\newcommand{\brk}[1]{\langle#1\rangle}
\newcommand{\R}{{ \mathbb{R}  }}

\newcommand{\bke}[1]{\left( #1 \right)}
\newcommand{\bkt}[1]{\left[ #1 \right]}
\newcommand{\bket}[1]{\left\{ #1 \right\}}
\newcommand{\norm}[1]{\left\Vert #1 \right\Vert}
\newcommand{\abs}[1]{\left| #1 \right|}

\newcommand{\nnrm}[1]{{\vert\kern-0.25ex\vert\kern-0.25ex\vert #1 
    \vert\kern-0.25ex\vert\kern-0.25ex\vert}}

\newcommand{\ud}{\mathrm{d}}
\newcommand{\rd}{\partial}

\newcommand{\ackn}[1]{
\addtocontents{toc}{\protect\setcounter{tocdepth}{1}}
\subsection*{Acknowledgements} {#1}
\addtocontents{toc}{\protect\setcounter{tocdepth}{2}} }

\begin{document}

\title{Stability and instability for the fully parabolic Keller-Segel system around constant equilibrium}
	
\author[1]{Jaewook Ahn\thanks{jaewookahn@dgu.ac.kr}}
\author[2]{Jae-Myoung Kim\thanks{jmkim02@anu.ac.kr}}
\author[3]{Junha Kim\thanks{junha02@ajou.ac.kr}}
\affil[1]{Department of Mathematics, Dongguk University, Seoul, 04620, Republic of Korea}
\affil[2]{Department of Mathematics Education, Gyeongkuk National University, Andong-si, Gyeongsangbuk-do, 36729, Republic of Korea}
\affil[3]{Department of Mathematics, Ajou University, Suwon-si, Gyeonggi-do, 16499, Republic of Korea}
\maketitle

\begin{abstract}
This paper studies the Cauchy problem for the fully parabolic Keller-Segel system. The main results show that there exists a critical threshold $A_{\rm crit}>0$ for steady states $(A,A)$ such that the steady states are nonlinearly stable when $A\le A_{\rm crit}$ and nonlinearly unstable when $A>A_{\rm crit}$. 
 We discuss asymptotic convergence rates as well.  In the subcritical case $A<A_{\rm crit}$, the rates correspond to those of the heat equation, and in the critical case $A=A_{\rm crit}$, the rates correspond to half those of the heat equation.\end{abstract}

\section{Introduction}
In the 1970s, Keller and Segel proposed a mathematical model describing chemotaxis \cite{KS71}.  The Keller-Segel model captures in principle the dynamic interplay between cell density and the concentration of a chemoattractant, providing a framework to explore the emergence of complex structures. 
 
This paper studies the Keller-Segel system
\begin{equation}\label{INTROKS}
\rd_t b -\Delta b= -\nabla \cdot (b \nabla a),\qquad\qquad
\tau\rd_t a -\Delta a  = b- \gamma a,
\end{equation} 
where $b$ and $a$ denote the cell density and the chemoattractant concentration, respectively.  One of the central questions in the study of the Keller-Segel system has been the global existence and blow-up of solutions. Depending on the initial conditions and parameters $\tau$ and $\gamma$, solutions may either remain globally bounded for all time or experience finite- or infinite-time blow-up. Indeed, extensive study has been devoted to establishing rigorous conditions for global existence and blow-up over the past decades \cite{B99,BKLN06,OY01,W10,W13}.  Classical results indicate that in two-dimensional space, if the total mass of the initial cell density is below or equal to a certain critical value, solutions exist globally, whereas for larger mass values, blow-up can occur in finite time \cite{BKLN06,BCM08,BDP06}. 
For more results on global existence and blow-up, including those in dimensions three and higher, see the review paper~\cite{BBTW15}. 
A related question in the mathematical analysis of the Keller-Segel system is the stability problem. Determining the conditions under which solutions remain stable or become unstable is crucial for predicting their behavior. This is also a classical topic in the context of the Keller-Segel model. In the very beginning, Keller and Segel regarded the onset of slime mold aggregation as an instability and analyzed the linear instability near constant steady states in a bounded domain~\cite{KS71}. Similar stability problems have been further investigated for more general Keller–Segel systems by Schaaf~\cite{S85}. For the Cauchy problem of \eqref{INTROKS} in $\R^{2}$ with $\tau=\gamma=0$, the asymptotic stability of steady states with a critical mass have been investigated in \cite{BCC12,CF13,LNY13}. We refer the reader to \cite{CGMN22, DDDMW24, GS24, RS14,V02} for related results on the stability and instability of singular solutions. 

The main purpose of this study is, first, to find a critical threshold that determines the Keller-Segel system’s transition between stable and unstable regimes, and second, to establish a complete dichotomy between nonlinear stability and instability, including the critical case.
  There have been several thresholds proposed that distinguish the stability and instability of the Keller-Segel system. For instance, Winkler~\cite{W18} considered
a parabolic-elliptic counterpart of \eqref{INTROKS} with the second equation 
$
-\Delta a-\avint_{B(0;R)} b=b$ in the ball $B(0;R)\subset\R^{d}$, $d\ge2$, and identified a critical mass threshold
$
 M_{\rm crit}
$
for radial initial data, beyond which constant steady states become unstable and eventually undergo finite-time blow-up. If the total mass is below $M_{\rm crit}$, then there exist radial initial data that are more concentrated than the constant steady states, but the system still admits globally bounded solutions. The question remains open when the total mass equals $M_{\rm crit}$.
On the other hand, a critical threshold
 \[
 A_{\rm crit}=1
 \] 
for
 the Cauchy problem 
 \eqref{INTROKS} of parabolic-elliptic type, $\tau=0$ and $\gamma=1$, 
 was found by 
 Cygan et al.~\cite{CKKW21}. The constant steady states $(A,A)$ of the Cauchy problem are nonlinearly stable in the subcritical case $A< A_{\rm crit}$, and nonlinearly unstable in the supercritical case  $A> A_{\rm crit}$.
  For the critical case $A= A_{\rm crit}$, the linear stability result was shown in \cite[Rmk 4.9]{CKKW21} but the nonlinear stability result remains open.
Compared to the parabolic-elliptic system, only particular results have been established for the fully parabolic case.  The result we are aware of in this direction is due to Guo-Hwang~\cite{GH10}, who obtained the nonlinear instability of constant steady states in a periodic domain setting. 

 In the present work, we study the 
stability problem 
 for the fully parabolic Keller-Segel system near a constant steady state. Since the values of $\tau>0$ and  $\gamma>0$ do not play any role in establishing a complete dichotomy, we
assume, for simplicity, that 
\[
\tau=1=\gamma,
\] and consider the Cauchy problem for \eqref{INTROKS} near $(A,A)$ with $A\in [0,\infty)$. Denoting the perturbations 
$
n=b-A$ and $c=a-A$,
 we reformulate it  as 
\begin{equation}\label{eq_crt}
\left\{
\begin{aligned} 
&\rd_tn -\Delta(n-Ac)   = -\nabla \cdot (n \nabla c), \\
& \rd_tc +(1-\Delta)c - n = 0,\\
& n(\cdot,0)=n_{0},\quad c(\cdot,0)=c_{0},
\end{aligned}
\right.\qquad x\in \R^{d},\,t>0.
\end{equation} 
Our notion of nonlinear (Lyapunov) stability and instability reads as follows (see, e.g., \cite[Def.~2.1]{FPS06}).
\begin{definition}
	Let $<\!\!X, Z\!\!>$ be a pair of Banach spaces. An equilibrium $(A,\, A)$, $A\in [0,\infty)$, of \eqref{INTROKS} is called $<\!\!X,Z\!\!>$ nonlinearly (Lyapunov) stable if for any $\varepsilon > 0$, there exists $\delta > 0$ such that  $(n_0,\, c_0)\in X$ and \begin{equation*}
		\begin{gathered}
			\| (n_0,c_0) \|_{Z} \le \delta
		\end{gathered}
	\end{equation*} imply that there exists a global-in-time solution to \eqref{eq_crt} such that $
			(n(t),c(t)) \in \mathcal{C}([0,\infty);X)$
 and  \[
			\sup_{t\ge0} \| (n(t), c(t)) \|_{Z} \le \varepsilon.
			\] 
The equilibrium $(A,\,A)$ that is not nonlinearly stable is called nonlinearly unstable.
\end{definition}  
Before stating our main results, let us summarize the contributions of this paper.
\begin{itemize}
\item {\bf Stable versus unstable dichotomy}.  A dichotomy between nonlinear stability and instability, including the critical case, is established.
 It will turn out that   steady states $(A,A)$ of  \eqref{INTROKS} with $\tau=\gamma =1$
 are nonlinearly stable if  $A\le   A_{\rm crit}(=1)$ and nonlinearly  unstable if $A>   A_{\rm crit}$. 
\item {\bf Nonexistence of a critical mass threshold}. The Cauchy problem  \eqref{INTROKS} with $\tau > 0$, $\gamma = 0$ in $\R^{2}$ has
a critical threshold 
$
M_{\rm crit}(\tau)>0
$
such that all solutions emanating from smooth radial initial data with  supercritical mass
$\int_{\R^{2}}b(\cdot,0)>M_{\rm crit}(\tau)$ blow up in finite time \cite{M20} (see also \cite{M16,M20A}). It will turn out that
such a critical mass phenomenon does not hold whenever $\gamma>0$. Indeed, there is a global bounded solution of \eqref{INTROKS} at any mass level. See Remark~\ref{RMK12}.
\item {\bf Estimation of convergence rates}. The asymptotic convergence rates of $b$ and $a$ are obtained. If $A<   A_{\rm crit}$, the convergence rates correspond to those of the heat equation and if 
$A=  A_{\rm crit}$, the convergence rates correspond to half those of the heat equation. It might also be interesting to note that $b$ and $a$ get closer to each other at a faster rate than their convergence rates.
\end{itemize}
Throughout this paper, $C$ denotes a generic constant that may change from line to
line, $\mathcal{S}(\R^{d})$ denotes the Schwartz class. We denote $D^{k}=(-\Delta)^{\frac{k}{2}}$ for integer $k\ge0$, $\|f\|_{X\cap Y}:=\|f\|_{X}+\|f\|_{ Y}$, and  
\[
\mathscr{F}f(\xi)=\widehat{f}(\xi)=\int_{\mathbb{R}^{d}}e^{-ix\cdot\xi}f(x) \,\ud x,\qquad \mathscr{F}^{-1}f(x)=\frac{1}{(2\pi)^{d}}\int_{\mathbb{R}^{d}}e^{ix\cdot\xi}f(\xi)\,\ud \xi.
\]
We use the notation $g\lesssim h$ to indicate the estimate $g\le Ch$ for a universal constant $C>0$.
We mostly suppress  $\R^{d}$ in $L^{p}(\R^{d})$.

The first main result shows nonlinear stability in the subcritical case 
$ A<1(=A_{\rm crit})$.
 \begin{theorem}\label{thm1}
Let  $ A\in[0,1)$, $d \geq 2$,  $(n_0, c_0) \in  (H^{s}\times  H^{s+1})(\R^d)$ for some integer $s>\frac{d}{2}$, and let $j=0$ or $1$. Then, there exist $\delta_{j}>0$ such that if 
	\begin{equation}\label{cond1}
	\begin{aligned}
		\|\widehat{n_0}\|_{L^{1}(\R^d)} +  \||\xi|^{j}\widehat{c_0}\|_{L^{1}(\R^d)} \le \delta_{j},
	\end{aligned}
\end{equation}
then \eqref{eq_crt} possesses a unique global and smooth solution $(n,c)$ satisfying for some $C=C(A)>0$ 
\[
\|\widehat{n}\|_{L^{\infty}(0,\infty;L^{1}(\R^d))} +  \||\xi|^{j}\widehat{c}\|_{L^{\infty}(0,\infty;L^{1}(\R^d))} \leq C \delta_j,
\]
\[
 (n,c) \in \mathcal{C}([0,t);(H^{s}\times H^{s+1})(\R^d))\cap L^{2}(0,t;(H^{s+1}\times H^{s+2})(\R^d))\,\,\mbox{ for all }\,\,t>0.
\]
Moreover, the solution $(n,c)$  decays in time, in the sense that for  any integer $m\ge0$, there exists $C>0$ satisfying 
\[
\| D^{m} n(\cdot,t) \|_{L^2(\R^{d})} + \| D^{m} c (\cdot,t)\|_{L^2(\R^{d})}  \le Ct^{-\frac{m}{2}}\qquad\mbox{ for all }\,\, t>1, 
\]
\[
 \| D^m(n-c)(\cdot,t) \|_{L^2(\R^{d})} \le C(t^{-(\frac m2 +\frac d4+1)} + At^{-(\frac{m}{2}+1)})\qquad\mbox{ for all }\,\, t>1. 
\]
\end{theorem}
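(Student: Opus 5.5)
We work on the Fourier side in the Wiener-type space $\widehat{L^1}$ and run a fixed point argument driven by the linear symbol. Taking Fourier transforms of \eqref{eq_crt}, the linear part reads
\[
\partial_t\begin{pmatrix}\widehat n\\ \widehat c\end{pmatrix}=M(\xi)\begin{pmatrix}\widehat n\\ \widehat c\end{pmatrix},\qquad M(\xi)=\begin{pmatrix}-|\xi|^2 & A|\xi|^2\\ 1 & -(1+|\xi|^2)\end{pmatrix}.
\]
We have $\mathrm{tr}\,M=-(1+2|\xi|^2)$ and $\det M=|\xi|^2(1+|\xi|^2-A)$, so for $A<1$ both eigenvalues have negative real part for $\xi\neq0$.

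\textbf{Step 1 (linear analysis).} We compute the eigenvalues
\[
\lambda_\pm=\tfrac12\Bigl(-(1+2|\xi|^2)\pm\sqrt{1+4A|\xi|^2}\Bigr).
\]
Near $\xi=0$ this gives $\lambda_+\approx-(1-A)|\xi|^2$, a heat-like mode, and $\lambda_-\approx-1$. For large $|\xi|$ we get $\lambda_\pm\approx-|\xi|^2\pm\sqrt{A}|\xi|$, which is still parabolic.

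From this we aim for a pointwise bound on the matrix exponential,
\[
|e^{tM(\xi)}|\lesssim e^{-\kappa t|\xi|^2/(1+|\xi|^{2})}e^{-\kappa t\min(|\xi|^2,1)} ,
\]
or more simply $|e^{tM}|\le Ce^{-\kappa(1-A)t\min(1,|\xi|^2)}$ with $\kappa>0$. We must also track the entries separately.

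The $(1,2)$ entry carries a factor $A|\xi|^2$. The $(2,1)$ entry behaves like $(e^{\lambda_+t}-e^{\lambda_-t})/(\lambda_+-\lambda_-)$, which is bounded, and this is why $c$ is naturally one derivative better. Away from the set where $\lambda_+=\lambda_-$ the projections are bounded, and here $\lambda_+-\lambda_-=\sqrt{1+4A|\xi|^2}\ge1$ never vanishes. Hence the spectral projections are uniformly bounded, which makes Step 1 clean.

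\textbf{Step 2 (global small solution in $\widehat{L^1}$).} Write Duhamel's formula with nonlinearity $-i\xi\cdot\widehat{n\nabla c}=-i\xi\cdot(\widehat n*(i\eta\widehat c))/(2\pi)^d$. We estimate in the norm
\[
X=\sup_t\Bigl(\|\widehat n\|_{L^1}+\||\xi|^j\widehat c\|_{L^1}\Bigr)+\int_0^\infty\Bigl(\||\xi|^2\widehat n\|_{L^1}+\||\xi|^{j}(1+|\xi|^2)\widehat c\|_{L^1}\Bigr)dt .
\]
The dissipative integral is obtained in the standard Lei--Lin way: integrate $|e^{(t-s)M}|$ pointwise in $\xi$ against the forcing, and use $|\xi|\,|\eta|\le|\xi|(|\xi-\eta|+|\xi|)$ together with the elementary inequality $|\xi|\le|\xi-\eta|+|\eta|$ to split derivatives. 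The bilinear term is then bounded by $\|\widehat n\|_{L^1}\,\||\eta|^2\widehat c\|_{L^1}+\||\xi|\widehat n\|_{L^1}\,\||\eta|\widehat c\|_{L^1}$, and both pieces are controlled via interpolation by $X^2$.

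\textbf{Main obstacle.} The main difficulty is the low-frequency part of $n$. The damping of $n$ there is only $|\xi|^2$, and the factor $\xi$ from the divergence must absorb one power. This is exactly where $j=0$ versus $j=1$ matters: for $j=0$ the $c$-component has the extra damping $1$ at low frequency, which compensates.

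With the a priori estimate $X\lesssim\delta_j+X^2$ in hand, a continuity argument together with local $H^s\times H^{s+1}$ well-posedness (standard energy methods, $s>d/2$) gives the global solution. Propagation of the $H^s$ regularity then follows from the energy inequality and Gronwall's inequality, using $\int_0^\infty\|\nabla c\|_{L^\infty}\,dt\lesssim X$ for the commutator terms.

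\textbf{Step 3 (decay).} Since the data lie only in $L^2$, we obtain $t^{-m/2}$ decay from the uniform $\widehat{L^1}$ smallness, using the Fourier splitting method of Schonbek. Multiply the $L^2$ energy identity for $D^mn$ and $D^mc$ by $(1+t)^k$ and split frequencies at the ball $|\xi|^2\le k/((1-A)t)$. On that ball, bound the low-frequency mass by $\|\widehat n\|_{L^\infty}^2 t^{-d/2}\le$ something. Alternatively, use directly the bound $|\xi|^m e^{-\kappa t|\xi|^2}\lesssim t^{-m/2}$ on the Duhamel formula in $L^2$, with $L^2$ data; this gives $\|D^m(n,c)\|_{L^2}\lesssim t^{-m/2}$ by induction on $m$, where the nonlinear term is handled as smaller by the smallness of $\|\widehat n\|_{L^1}$.

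For $n-c$ we use the second equation, $n-c=\partial_tc-\Delta c$. At low frequency, $\widehat n-\widehat c$ is given along the eigenvector of $\lambda_+$, namely $(1-\lambda_+ -1-|\xi|^2)\widehat c$. Hence $|\widehat n-\widehat c|\lesssim |\xi|^2(1+A)|\widehat c|$ plus the fast mode $e^{-t/2}$ plus the nonlinear contribution. This yields the $At^{-(m/2+1)}$ term. The remaining $t^{-(m/2+d/4+1)}$ comes from the quadratic term, which decays like the $L^1$-to-$L^2$ heat rate $t^{-d/4}$ times two extra derivatives.
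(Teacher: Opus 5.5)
\textbf{The central estimate of Step~2 fails already for the linearized flow.} The space $X$ asks for time-integrated dissipation of $c$ at low frequencies that the system does not have.

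Your claim that the $c$-component ``has the extra damping $1$ at low frequency'' holds only for the fast mode $\lambda_-\approx-1$. Since $c$ is forced by $n$, along the slow eigenvector ($\lambda_+\approx-(1-A)|\xi|^2$) one has $\widehat c\approx\widehat n$, so $c$ is only heat-damped there. Concretely, for $c_0=0$ the linear solution is $\widehat c(t)=\frac{e^{\lambda_+t}-e^{\lambda_-t}}{\lambda_+-\lambda_-}\,\widehat{n_0}$. Its kernel is positive and has time integral $1/\det M=1/\bigl(|\xi|^2(1-A+|\xi|^2)\bigr)$, hence
\[
\int_0^\infty\bigl\||\xi|^j(1+|\xi|^2)\widehat c\bigr\|_{L^1}\,\ud t=\int_{\R^d}\frac{|\xi|^{j}(1+|\xi|^2)}{|\xi|^2(1-A+|\xi|^2)}\,|\widehat{n_0}|\,\ud\xi\ \ge\ \int_{|\xi|\le1}|\xi|^{j-2}|\widehat{n_0}|\,\ud\xi .
\]
This is not bounded by $C\|\widehat{n_0}\|_{L^1}$: concentrate $\widehat{n_0}$ near $\xi=0$. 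It is even infinite for $d=2$, $j=0$, e.g.\ for $n_0\in\mathcal S$ with $\int n_0\neq0$.

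Consequences:
\begin{itemize}
\item $X\lesssim\delta_j+X^2$ cannot hold.
\item The uniform $H^s$ bound you derive from $\int_0^\infty\|\nabla c\|_{L^\infty}\,\ud t\lesssim X$ is unsupported.
\item The spectral projections are not uniformly bounded. Their $(1,2)$ entry $A|\xi|^2/\sqrt{1+4A|\xi|^2}$ grows like $|\xi|$; only the combination in $e^{tM}$ is bounded.
\end{itemize}

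The paper avoids diagonalizing here. It treats $A\Delta c$ as a source in the heat equation for $n$, and $n$ as a source in the damped equation for $c$. It closes in the mixed norms $\int_{\R^d}\||\xi|\widehat n\|_{L^2(0,T)}\,\ud\xi$, $\int_{\R^d}\||\xi|\widehat c\|_{L^2(0,T)}\,\ud\xi$ and $\int_{\R^d}\|\widehat{n\nabla c}\|_{L^2(0,T)}\,\ud\xi$: one derivative, $L^2$ in time pointwise in $\xi$. The linear flow does control these, since $\||\xi|e^{-|\xi|^2t}\|_{L^2_t}\le1$. The condition $A<1$ enters as the factor $1/(1-A)$ when the two estimates are combined (Lemma~\ref{REVLEM31}, Proposition~\ref{NCSMALLLEM}). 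The index $j$ only determines whether $\|c\|_{L^\infty}$ or $\|\nabla c\|_{L^\infty}$ is small, and hence which of \eqref{eng_n}, \eqref{eng_nn} gives the uniform $H^s$ bound. It has nothing to do with low-frequency damping of $c$.

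\textbf{Step~3 does not close as written either.} In the Duhamel induction on $m$, on $[t/2,t]$ you pay $(t-s)^{-1/2}$ for the divergence. Then $\|D^m(n\nabla c)\|_{L^2}$ costs one of two things:
\begin{itemize}
\item $\|D^{m+1}n\|_{L^2}\|c\|_{L^\infty}$, which is the next level, so there is no induction;
\item $\|D^mn\|_{L^2}\|\nabla c\|_{L^\infty}$, which yields $t^{\frac{1-m}{2}}$ unless you already know $\|\nabla c\|_{L^\infty}\lesssim s^{-1/2}$.
\end{itemize}
Moreover, for $j=1$ the data do not make $\|c\|_{L^\infty}$ small.

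The Fourier-splitting variant cannot use $\|\widehat n\|_{L^\infty}$, since $n_0\notin L^1$ here; the uniform $L^2$ bound is what yields $t^{-m/2}$. It also still needs a single-level energy inequality with a good sign.

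The paper supplies this. \eqref{eng_each} holds once $\|n\|_{L^\infty}+\|c\|_{L^\infty}$ is small, and the paper proves that smallness for $t\ge T_\varepsilon$ in steps:
\begin{enumerate}
\item Derive $\|Dn\|_{L^2}^2\lesssim(1+t)^{-1}$ from an ODI, which needs only $\|n\|_{L^\infty}$ small.
\item Use Gagliardo--Nirenberg for $n$ and the Duhamel formula for $c$.
\item The ODI $y'+Cy^{\frac{m}{m-1}}\le0$, with uniformly bounded $\|n\|_{L^2}$ and $\|Dc\|_{L^2}$, then gives the rate. It is improved for $c$ and for $n-c$ through $e^{t(\Delta-1)}$.
\end{enumerate}
Finally, the factor $t^{-d/4}$ in the $n-c$ rate comes from $\|n\|_{L^\infty}\lesssim\|n\|_{L^2}^{1/2}\|D^dn\|_{L^2}^{1/2}$ and the derivative decay. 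It does not come from an $L^1$--$L^2$ heat estimate.
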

\begin{remark}
If $d\ge2$ and $s>\frac{d}{2}$, then 
$\|\widehat{f} \|_{L^{1}(\R^d)}\lesssim \| f \|_{H^{s}(\R^d)}$ for all $f\in H^{s}(\R^d)$.\end{remark}
\begin{remark}\label{RMKL1}
When $A=0$, if we further assume in Theorem~\ref{thm1} that $n_{0}, c_{0}$ are in $L^{1}(\R^{d})$ and nonnegative, then it holds that $\|n(\cdot,t)\|_{L^{1}(\R^{d})}= \int_{\R^{d}} n_{0}$,  $ \|c(\cdot,t)\|_{L^{1}(\R^{d})}=e^{-t} \int_{\R^{d}} c_{0} +(1-e^{-t}) \int_{\R^{d}} n_{0}$, and $n$ and $c$ are non-negative for all $t>0$. Moreover, these $L^{1}$-bounds yield better decay rates,  
$\| D^{m} n(\cdot, t)\|_{L^2(\R^{d})}+\| D^{m} c(\cdot, t) \|_{L^2(\R^{d})} \lesssim  t^{-(\frac{m}{2}+\frac{d}{4})}$, $\| D^{m} (n-c)(\cdot, t)\|_{L^2(\R^{d})} \lesssim  t^{-(\frac{m}{2}+\frac{3d}{4}+1)}$ for all $t\ge1$ and integer $m\ge0$. For the proof, see Appendix~\ref{secA2}.
\end{remark}
\begin{remark}\label{RMK12} Consider the case $A=0$, $d=2$. According to
Theorem~\ref{thm1} and Remark~\ref{RMKL1}, for any $M>0$ there exists $(n_{0},c_{0})$ with $\int_{\R^{2}}n_{0}=M$ such that   \eqref{eq_crt} has a global bounded solution. For instance, if   
 $n_{0}(x)=\mu e^{-\sigma |x|^2}$, $\mu,\sigma>0$,  i.e., $\widehat{n_{0}}(\xi)= \pi(\frac{\mu}{\sigma}) e^{- |\xi|^{2}/(4\sigma)}$, then   we can take small 
$\|\widehat{n_0}\|_{L^{1}(\R^2)}=  4\pi^{2}\mu$ while $\int_{\R^{2}}n_{0}=\pi\frac{\mu}{\sigma}$
to be arbitrarily large.
\end{remark}
Under a stronger assumption on $(n_0,c_0)$, 
nonlinear stability  is shown in the critical case $A=1$.
\begin{theorem}\label{thm_crt}
Let $A= 1$, $d \geq 2$, and   $(n_0, c_0) \in  (H^{s}\times H^{s+1}) (\R^d)$ for some  integer $s>\frac{d}{2}$. Then, 
 there exists $\delta >0$ such that if
$n_{0}$ and $c_{0}$ further satisfy  $n_{0},\nabla c_{0}\in L^{1}(\R^d)$, $\widehat{c_0}\in L^{\infty}(\R^d)$, and  
\begin{equation}\label{cond2}
 \norm{    \widehat{n_0}  }_{L^{1}\cap L^{\infty}(\R^{d})}  + \norm{|\xi|\widehat{c_0} }_{L^{1}\cap L^{\infty}(\R^{d})}\le \delta,
\end{equation}
then \eqref{eq_crt} possesses a unique global and smooth solution $(n,c)$ satisfying for some $C>0$ independent of $\delta$  
\[
  \norm{    \widehat{n}   }_{L^{\infty}(0,\infty;L^{1}\cap L^{\infty}(\R^{d}))}  + \norm{|\xi|\widehat{c} }_{L^{\infty}(0,\infty;L^{1}\cap L^{\infty}(\R^{d}))} \le C\delta, 
 \]
\[
 (n,c) \in \mathcal{C}([0,t);(H^{s}\times H^{s+1})(\R^d))\cap L^{2}(0,t;(H^{s+1}\times H^{s+2})(\R^d))\,\,\mbox{ for all }\,\, t>0.
\]
Moreover, the solution $(n,c)$  decays in time, in the sense that for  any integer $m\ge0$,  there exists $C>0$ satisfying
\[
\| D^{m} n(\cdot,t) \|_{L^2(\R^{d})} + \| D^{m} c (\cdot,t)\|_{L^2(\R^{d})}  \le Ct^{-\frac{1}{2}(\frac{m}{2} + \frac{d}{4})}\qquad\mbox{ for all }\,\, t>1, 
\]
\[
\| D^m(n-c)(\cdot,t) \|_{L^2(\R^{d})} \le C t^{-\frac{1}{2}(\frac{m}{2} + \frac{d}{4}+1)}\qquad\mbox{ for all }\,\, t>1. 
\]
\end{theorem}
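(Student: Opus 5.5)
We prove Theorem~\ref{thm_crt} by a Fourier-side analysis of the linearized operator, closed by a bootstrap in the norm
\[
X(T):=\sup_{0\le t\le T}\Bigl(\|\widehat n(t)\|_{L^1\cap L^\infty}+\||\xi|\widehat c(t)\|_{L^1\cap L^\infty}\Bigr),
\]
followed by energy estimates for regularity and decay. In Fourier variables the linear part of \eqref{eq_crt} with $A=1$ reads
\[
\partial_t\begin{pmatrix}\widehat n\\ \widehat c\end{pmatrix}=M(\xi)\begin{pmatrix}\widehat n\\ \widehat c\end{pmatrix},\qquad M(\xi)=\begin{pmatrix}-|\xi|^2&|\xi|^2\\ 1&-(1+|\xi|^2)\end{pmatrix}.
\]
We have $\operatorname{tr}M=-(1+2|\xi|^2)$ and $\det M=|\xi|^4+|\xi|^2-|\xi|^2=|\xi|^4$. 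The eigenvalues are therefore $\lambda_\pm=\tfrac12\bigl(-(1+2|\xi|^2)\pm\sqrt{1+4|\xi|^2}\bigr)$. Near $\xi=0$ this gives $\lambda_+\approx-|\xi|^4$ and $\lambda_-\approx-1$. For $|\xi|\ge1$ both eigenvalues satisfy $\lambda_\pm\le -c|\xi|^2$. This degeneracy $e^{-t|\xi|^4}$, in place of $e^{-t|\xi|^2}$, is exactly why the decay rates are half those of the heat equation. The eigenvector of $\lambda_+$ is approximately $(1,1)$, and $\widehat n-\widehat c$ is damped by an extra factor $|\xi|^2$, which explains the improved rate for $n-c$.

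First I would write the semigroup explicitly as $e^{tM}=\sum_\pm e^{t\lambda_\pm}P_\pm(\xi)$ and bound each entry pointwise. The projections $P_\pm$ are uniformly bounded, since $\lambda_+-\lambda_-=\sqrt{1+4|\xi|^2}\ge1$. Second, I write Duhamel's formula with the nonlinearity $\widehat{\nabla\cdot(n\nabla c)}=i\xi\cdot(\widehat n*\widehat{i\eta c})$, which enters only the $n$-equation. It carries one factor $|\xi|$ in front of a convolution, and that convolution is controlled in $L^1$ and in $L^\infty$ by $\|\widehat n\|_{L^1\cap L^\infty}\||\xi|\widehat c\|_{L^1\cap L^\infty}$ via Young's inequality.

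The main obstacle is absorbing this factor $|\xi|$ through the weak low-frequency dissipation $e^{-(t-s)|\xi|^4}$. A bound such as $\int_0^t|\xi|e^{-(t-s)|\xi|^4}\,ds\lesssim|\xi|^{-3}$ is not integrable near $0$ in the $L^1_\xi$ sense for $d=2,3$. The pointwise-in-time sup norm alone therefore does not close. I would resolve this in one of two ways.
\begin{itemize}
\item Use the null structure: the $(1,1)$ and $(2,1)$ entries of $e^{tM}$ applied to the forcing contain, at low frequency, an extra factor coming from $P_+$ projected onto the $n$-component. I would verify this explicitly.
\item Add to the bootstrap a space-time norm $\int_0^T\||\xi|^2\widehat n\|_{L^1}\,dt$ and the analogous norm for $\widehat c$. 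Multiplying $|\widehat n|$ by its equation, energy-type inequalities give $\partial_t|\widehat n|+|\xi|^4|\widehat n|\lesssim|\xi||\widehat{n\nabla c}|$, and integrating in time and in $\xi$ yields dissipation $\int\||\xi|^4\ldots\|$ for both components.
\end{itemize}
This second route is where the extra hypotheses enter. The $L^\infty_\xi$ part of the norm, supported by $n_0,\nabla c_0\in L^1$ and $\widehat c_0\in L^\infty$, handles the region $|\xi|\le1$: there one factor is estimated in $L^\infty$ and the $|\xi|$ is integrated against the bounded measure. The $L^1$ part handles the high frequencies. I expect $X(T)+Y(T)\le C\delta+C(X+Y)^2$, which closes the bootstrap for $\delta$ small. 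Local existence in $H^s\times H^{s+1}$, together with a continuation criterion involving $\|\widehat n\|_{L^1}+\||\xi|\widehat c\|_{L^1}$, which dominate $\|n\|_{L^\infty}$ and $\|\nabla c\|_{L^\infty}$, gives global existence. Uniqueness and the regularity class follow from standard energy estimates.

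For the decay, I would use $\|D^m n(t)\|_{L^2}^2=\int|\xi|^{2m}|\widehat n|^2\,d\xi$. On $|\xi|\le1$, the uniform $L^\infty$ bound on $\widehat n$ together with $e^{-t|\xi|^4}$ gives $\int_{|\xi|\le1}|\xi|^{2m}e^{-2t|\xi|^4}\,d\xi\sim t^{-(2m+d)/4}$. This produces precisely $t^{-\frac12(\frac m2+\frac d4)}$ after taking the square root. The Duhamel contribution is treated in the same way using the bootstrap bounds, as in the proof of Theorem~\ref{thm1}. On $|\xi|\ge1$ the decay is exponential. For $n-c$ the low-frequency symbol carries an additional factor $|\xi|^2$, which yields the extra $t^{-1/2}$.
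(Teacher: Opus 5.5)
Your proposal has a genuine gap at the decisive step, which is closing the bootstrap. You correctly note that the unweighted norm $X(T)$ cannot absorb the divergence factor $|\xi|$ against the degenerate dissipation $e^{-(t-\tau)|\xi|^4}$, but neither of your two remedies works.

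\textbf{No null structure.} For a forcing $F e_1$, the slow mode contributes $\frac{1+\sqrt{1+4|\xi|^2}}{2\sqrt{1+4|\xi|^2}}\,F$ to $\widehat n$ and $\frac{1}{\sqrt{1+4|\xi|^2}}\,F$ to $\widehat c$. Both multipliers equal $1$ at $\xi=0$, since the left and right slow eigenvectors are approximately $(1,|\xi|^2)$ and $(1,1)$. So the single $|\xi|$ you already have is the only low-frequency gain.

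\textbf{The space-time route fails.} It starts from $\partial_t|\widehat n|+|\xi|^4|\widehat n|\lesssim|\xi||\widehat{n\nabla c}|$, which is false. The $n$-equation damps $\widehat n$ at rate $|\xi|^2$ and feeds it back through $|\xi|^2\widehat c$; only the slow eigencoordinate $w$ is damped at rate $\lambda\sim|\xi|^4$. Even for $w$, integrating $\partial_t|w|+\lambda|w|\le|F|$ bounds $\int_0^T\!\int\lambda|w|$ only by $\int_0^T\||\xi|\widehat{n\nabla c}\|_{L^1}\,dt$. At low frequency this requires $\int_0^T\||\xi|\widehat n\|_{L^1}\||\xi|\widehat c\|_{L^1}\,dt$, and interpolating between sup-in-time norms and $|\xi|^4$-weighted $L^1_t$ norms leaves a factor growing like $T^{1/2}$. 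So $X+Y\le C\delta+C(X+Y)^2$ is asserted rather than derived, and everything downstream rests on it.

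\textbf{The missing idea} is to build the decay rates into the bootstrap norm itself. The paper uses
\[
Y(T)=\sup_{t\le T}\Bigl(\|\widehat n(t)\|_{L^\infty}+(1+t)^{\frac d4}\|\widehat n(t)\|_{L^1}+(1+t)^{\frac{d+1}{4}}\||\xi|\widehat c(t)\|_{L^1}\Bigr).
\]
The $L^\infty_\xi$ smallness in \eqref{cond2} produces these rates at the linear level, through $\||\xi|^k e^{-t|\xi|^4}\|_{L^1}\sim t^{-\frac{d+k}{4}}$. Its role is not to integrate $|\xi|$ against a bounded measure near $\xi=0$.

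In the Duhamel term one splits at $t/2$:
\begin{itemize}
\item On $[0,t/2]$, the kernel $|\xi|e^{-(t-\tau)|\xi|^4}$ goes in $L^1_\xi$ and the nonlinearity in $L^\infty_\xi$, bounded by $\|\widehat n\|_{L^\infty}\||\xi|\widehat c\|_{L^1}$.
\item On $[t/2,t]$, and throughout the $L^\infty_\xi$ estimate, the kernel goes in $L^\infty_\xi$, where $\||\xi|e^{-s|\xi|^4}\|_{L^\infty}\lesssim s^{-1/4}$ is integrable. The resulting $\int_0^t(t-\tau)^{-\frac14}(1+\tau)^{-\frac{d+1}{4}}\,d\tau$ is bounded uniformly in $t$ precisely when $d\ge2$.
\end{itemize}
This gives $Y\lesssim\delta+Y^2$ (Lemmas~\ref{LEM42}--\ref{LEM43}). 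The continuity argument also needs $Y$ to be continuous in $T$. That is where $n_0,\nabla c_0\in L^1$ enter, via Riemann--Lebesgue for $t\mapsto\|\widehat n(t)\|_{L^\infty}$.

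\textbf{Smaller points.} Your projections $P_\pm$ are not uniformly bounded: their $(1,2)$ entry has size $|\xi|^2/\sqrt{1+4|\xi|^2}$. This is harmless only because you measure $|\xi|\widehat c$. In the decay step, the high-frequency Duhamel part does not decay exponentially. For general $m$ you still need energy estimates as in Lemma~\ref{L2ESTLEM}, interpolated against the bounds on $\|\widehat n\|_{L^\infty}$ and $\|\widehat c\|_{L^\infty}$.
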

The last result shows nonlinear instability in the supercritical case $A>1$.
\begin{theorem}\label{thm3}
Let $A>1$, $d\ge2$, and let  $ s>\frac{d}{2}$ be an integer. Then, the  steady state $(0,0)$ of the system \eqref{eq_crt} is nonlinearly unstable; that is, there exist $C_{0}>1$ and $\eta_0>0$ such that for all $\eta\in(0,\eta_{0})$ there exist $T_{\eta}=\frac{4A}{(A-1)^{2}}\ln \frac{C_{0}\eta_{0}}{\eta} <T_{\rm max}$ and
 $(n_{0,\eta},c_{0,\eta})\in  (H^{s}\times H^{s+1}) (\R^d)$ with the property that
$
\|(n_{0,\eta},c_{0,\eta})\|_{(H^{s}\times H^{s+1})(\R^{d})}= \eta  
$
but the solution $(n,c)$ to \eqref{eq_crt} with $(n_{0},c_{0})=(n_{0,\eta},c_{0,\eta})$  satisfies
\begin{equation}\label{THM13GOAL}
\min\{\| n(\cdot,T_{\eta})\|_{L^{2}}, \| c(\cdot,T_{\eta})\|_{L^{2}},\|\nabla  c(\cdot,T_{\eta})\|_{L^{2}}\}\ge \frac{\eta_{0}}{4}.
\end{equation}
\end{theorem}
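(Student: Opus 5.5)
The plan is a Guo–Strauss type bootstrap: build initial data from a linearly growing Fourier mode, show the nonlinear solution follows the linear one up to the escape time $T_\eta$, and then read off the lower bound \eqref{THM13GOAL}.

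\textbf{Linear analysis.} Taking Fourier transforms, the linearization of \eqref{eq_crt} at $(0,0)$ is
\[
\partial_t\widehat n=-|\xi|^2\widehat n+A|\xi|^2\widehat c,\qquad \partial_t\widehat c=\widehat n-(1+|\xi|^2)\widehat c .
\]
Its symbol matrix has trace $-(1+2|\xi|^2)$ and determinant $|\xi|^2(1+|\xi|^2)-A|\xi|^2=|\xi|^2(|\xi|^2+1-A)$. Hence for $A>1$ and $0<|\xi|^2<A-1$ there is a positive eigenvalue
\[
\lambda(\xi)=\tfrac12\Bigl(-(1+2|\xi|^2)+\sqrt{1+4A|\xi|^2}\Bigr).
\]
Maximizing over $r=|\xi|^2$ gives $\sqrt{1+4Ar}=2A$, i.e.\ $r_*=(4A^2-1)/(4A)$. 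For the rate stated in the theorem I need $\Lambda:=\sup_\xi\lambda(\xi)$. A direct computation gives $\Lambda=\frac{A}{2}-1+\frac{1}{4A}=\frac{(2A-1)^2-2A}{4A}$. This does not match the rate $\frac{(A-1)^2}{4A}$ implicit in $T_\eta$, so I would recheck it. Possibly the authors use a cruder lower bound on $\lambda$ at a specific frequency. Either way, I will fix a small annulus $\mathcal A$ of frequencies where $\lambda(\xi)\ge\lambda_0:=\frac{(A-1)^2}{4A}$, or any rate compatible with $T_\eta$, and where the upper bound $\lambda\le\Lambda$ holds globally.

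\textbf{Initial data.} Take $\widehat{n_0},\widehat{c_0}$ smooth and supported in the annulus, with $(\widehat{n_0},\widehat{c_0})(\xi)$ along the unstable eigenvector $(\lambda+1+|\xi|^2,\,1)$. Normalize so that $\|(n_{0,\eta},c_{0,\eta})\|_{H^s\times H^{s+1}}=\eta$. The linear solution $(n^L,c^L)$ then satisfies two-sided bounds on the annulus: each of $\|n^L(t)\|_{L^2}$, $\|c^L(t)\|_{L^2}$, $\|\nabla c^L(t)\|_{L^2}$ lies between $c_1\eta e^{\lambda_0 t}$ and $C\eta e^{\Lambda' t}$. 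The support is compact, so all Sobolev norms are comparable. If needed, I will shrink the annulus so that $\lambda$ is nearly constant on it and the upper and lower exponential rates are essentially equal.

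\textbf{Nonlinear energy estimate.} Write $(n,c)=(n^L,c^L)+(n^R,c^R)$, where the remainder has zero initial data and is forced by $-\nabla\cdot(n\nabla c)$. Let $\mathcal E(t)=\|n\|_{H^s}+\|c\|_{H^{s+1}}$ on the local existence interval, using the local well-posedness underlying Theorem~\ref{thm1}. A standard $H^s$ energy estimate, with $s>d/2$ and the dissipation absorbing the top derivatives, gives
\[
\frac{d}{dt}\mathcal E^2\le 2\Lambda_1\mathcal E^2+C\mathcal E^3 .
\]
Here $\Lambda_1$ is a growth bound for the linear operator in $H^s\times H^{s+1}$. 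This is the main obstacle: the operator is not symmetric, so the sharp rate $\Lambda$ is not automatic. I would symmetrize with the weighted norm $\|n\|^2+A\|\nabla c\|^2+\dots$, or else work with the Duhamel formula and the explicit Fourier semigroup bound $\|e^{tL}\|\le Ce^{\Lambda t}$. That gives $\mathcal E(t)\le 2C\eta e^{\Lambda t}$ as long as $\eta e^{\Lambda t}\le\eta_0$ with $\eta_0$ small.

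Duhamel then gives
\[
\|(n^R,c^R)(t)\|\le C\int_0^t e^{\Lambda(t-s)}\mathcal E(s)^2\,ds\le C\eta^2e^{2\Lambda t}.
\]
Define $T_\eta$ by $\eta e^{\lambda_0 T_\eta}\approx C_0\eta_0$, which matches $T_\eta=\lambda_0^{-1}\ln(C_0\eta_0/\eta)$ with $\lambda_0=\frac{(A-1)^2}{4A}$. At $t=T_\eta$ the linear part is $\ge c_1C_0\eta_0$, while the remainder is $\le C\eta_0^2$ times a factor $e^{2(\Lambda-\lambda_0)T_\eta}$. That factor is harmless once the annulus is chosen so that $\Lambda$ is effectively $\lambda_0$; concretely, I would apply the semigroup bound with rate $\lambda_0+\epsilon$ on the chosen annulus. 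Choosing $\eta_0$ small and $C_0$ appropriate then yields each of the three norms $\ge\eta_0/4$. Finally, $T_\eta<T_{\max}$ follows from the a priori bound $\mathcal E\lesssim\eta_0$ on $[0,T_\eta]$ together with the blow-up alternative.
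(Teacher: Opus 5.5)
\textbf{The rate bookkeeping is wrong, and it starts with a computational slip.} With $r=|\xi|^2$, the derivative $\frac{d}{dr}\bigl(\sqrt{1+4Ar}-1-2r\bigr)=\frac{2A}{\sqrt{1+4Ar}}-2$ vanishes at $\sqrt{1+4Ar}=A$, not $2A$. Your $r_*=A-\frac{1}{4A}$ actually lies outside the unstable band $0<r<A-1$. The maximum of $-\lambda_-$ is attained at $|\xi|^2=\frac{A^2-1}{4A}$ and equals $\frac{A-1}{2}-\frac{A^2-1}{4A}=\frac{(A-1)^2}{4A}$. This is exactly the rate in $T_\eta$ (see \eqref{LAMML}).

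Two steps of your plan then fail as written.
\begin{itemize}
\item There is no annulus on which $-\lambda_-\ge\frac{(A-1)^2}{4A}$; that set is a single sphere. The data must sit on an annulus where $-\lambda_-\ge\theta\frac{(A-1)^2}{4A}$, with $\theta=\theta(\eta)\to1$ as $\eta\to0$, chosen so that $e^{\theta\frac{(A-1)^2}{4A}T_\eta}\ge\frac12 e^{\frac{(A-1)^2}{4A}T_\eta}$. This is the paper's \eqref{THETATETA}. Because the annulus stays centred at a fixed radius, your lower constant $c_1$ remains uniform in $\eta$.
\item Your remedy, applying ``the semigroup bound with rate $\lambda_0+\epsilon$ on the chosen annulus'' to the remainder, is not legitimate. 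The term $\widehat{n\nabla c}=\widehat n*(i\xi\widehat c)$ is not supported in the annulus, so the Duhamel term must be propagated with the global supremum of $-\lambda_-$. Had that supremum genuinely exceeded the rate defining $T_\eta$, the factor $e^{2(\Lambda-\frac{(A-1)^2}{4A})T_\eta}$ would diverge as $\eta\to0$. No localization of the data could cure that, and a fixed $+\epsilon$ would likewise cost $e^{2\epsilon T_\eta}$.
\end{itemize}
The argument closes only because every upper bound uses exactly $\frac{(A-1)^2}{4A}$, while all the slack $\theta<1$ goes into the lower bound for the linear part.

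\textbf{With that correction, your single-norm bootstrap works and is a genuine alternative to the paper's.} Your symmetrization idea is exactly right: $\mathrm{diag}(1,A|\xi|^2)L_A(\xi)$ is symmetric. Hence in the norm $\|n\|_{H^s}^2+A\|\nabla c\|_{H^s}^2$ the linear flow grows at precisely $\frac{(A-1)^2}{4A}$. The surplus $\lambda_-+\frac{(A-1)^2}{4A}\ge\frac{|\xi|^2}{2}-C$ absorbs the top derivative of $\nabla\cdot(n\nabla c)$ at the price of $C\mathcal E^3$. The missing $\|c\|_{L^2}$ follows from the $c$-equation. On the Duhamel route you would instead need two facts you do not mention: the gap $\lambda_+-\lambda_-=\sqrt{1+4A|\xi|^2}\ge1$, which bounds the spectral projections on $H^s\times H^{s+1}$; and the smoothing estimate $|\xi|e^{-\lambda_-\sigma}\lesssim e^{\frac{(A-1)^2}{4A}\sigma}(1+\sigma^{-1/2})$, which handles the derivative loss.

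The paper instead runs two stopping times. $T_*$ only keeps $\|n\|_{H^s}\le 2C_0\eta_0$, while $T_{**}$ tracks $\|n\|_{L^2}\le 3\eta e^{\frac{(A-1)^2}{4A}t}$. On $|\xi|\le\sqrt{2(A-1)}$, the nonlinearity is bounded by $\|n\|_{L^2}\|\nabla c\|_{L^\infty}$ together with \eqref{FGHS} and Lemma~\ref{LEMHSHS1}. This gives the superlinear power $\frac{4s-d}{2s}>1$ of the growing quantity. On $|\xi|>\sqrt{2(A-1)}$, where $\lambda_-$ is bounded below by a positive constant, smoothing plus the $H^s$ bound suffice. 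The paper also takes $c_0=0$ and $\widehat{n_0}$ an indicator function, so the decaying $\lambda_+$-mode must be estimated separately.

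Your eigenvector-aligned data eliminates that mode, and your estimate is shorter. The paper's Grenier-type scheme never needs the top norm to track $\eta e^{\frac{(A-1)^2}{4A}t}$, and it yields explicit $C_0$ and $\eta_0$.
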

We briefly describe the overall proof strategy. 
The starting point is the spectral analysis of the linearized operator, which plays a central role in both the stability and instability analyses.
In Section~\ref{SEC:PRE}, we reformulate the problem in the frequency domain and analyze the associated linear operator, which turns out to be stable for $A\le 1$ and unstable for $A>1$.
In the subcritical (Section~\ref{SEC11}) and critical (Section~\ref{SEC12}) cases, we obtain global-in-time bounds and establish nonlinear Lyapunov stability by controlling the nonlinear term $-\nabla\cdot(n\nabla c)$ under suitable smallness assumptions on the initial perturbations. Subsequently, we derive temporal decay estimates, revealing that differing low-frequency structures account for the heat-type decay rate when $A<1$ and half that rate when $A=1$.
  In the supercritical case (Section~\ref{SEC13}), we construct initial data concentrated near an unstable Fourier mode. The growing linear part is shown to dominate the nonlinear remainder up to a suitable escape time, which proves nonlinear instability.

\section{Preliminary}\label{SEC:PRE}
The local existence result for \eqref{eq_crt} reads as follows. We omit the proof since it can be shown by the standard fixed point argument. See, e.g., \cite[Theorem~2.1]{AKKL17}.
\begin{lemma}\label{LEMLWP}
Let $A\ge0$, $d\ge2$, and $(n_0,c_0) \in (H^{s}\times H^{s+1})(\mathbb{R}^d)$ for some integer $s > \frac d2$. Then, there exists the maximal time of existence $T_{\rm max}\le\infty$, such that a unique regular solution $(n,c)$ of \eqref{eq_crt} exists and satisfies for all $t<T_{\rm max}$
\[
(n,c) \in \mathcal{C}([0,t);(H^{s}\times H^{s+1})(\R^d))\cap L^{2}(0,t;(H^{s+1}\times H^{s+2})(\R^d)).
\]
Moreover, if $ T_{\rm max}<\infty$, then
$
\displaystyle{\lim_{t\nearrow T_{\rm max}}} (\|n(\cdot,t)\|_{H^{s}(\R^{d})}+\|c(\cdot,t)\|_{H^{s+1}(\R^{d})})=\infty$.
\end{lemma}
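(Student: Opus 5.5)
We prove Lemma~\ref{LEMLWP} by a contraction argument in the natural energy space, followed by a standard continuation argument. Fix $T>0$ small and let
\[
X_T=\mathcal{C}([0,T];H^{s}\times H^{s+1})\cap L^{2}(0,T;H^{s+1}\times H^{s+2}),
\]
with norm $\|(n,c)\|_{X_T}=\sup_{t\le T}(\|n\|_{H^s}+\|c\|_{H^{s+1}})+\|n\|_{L^2_tH^{s+1}}+\|c\|_{L^2_tH^{s+2}}$.

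\textbf{Linear estimates.} Given $(\tilde n,\tilde c)$ in a closed ball $B_R\subset X_T$ with $R=2C_1\|(n_0,c_0)\|_{H^s\times H^{s+1}}$, define $(n,c)=\Phi(\tilde n,\tilde c)$ as the solution of the linear problem
\[
\rd_t c+(1-\Delta)c=n,\qquad \rd_t n-\Delta n=-A\Delta c-\nabla\cdot(\tilde n\nabla\tilde c),
\]
with data $(n_0,c_0)$. The linear part is an upper-triangular-in-spirit parabolic system, so it is convenient to solve it jointly. Apply $D^k$, $k\le s$, to the $n$-equation and test with $D^k n$; apply $D^k$, $k\le s+1$, to the $c$-equation and test with $D^kc$. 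The coupling terms are handled as follows:
\begin{itemize}
\item $A\langle D^k\Delta c,D^kn\rangle\le \tfrac14\|\nabla D^kn\|^2+A^2\|\nabla D^kc\|^2$, which is controlled by Gronwall together with the $c$-dissipation $\|\nabla D^{k}c\|$ for $k\le s+1$.
\item The term $\langle D^kn,D^kc\rangle$ at level $s+1$ is bounded by $\|D^{s}n\|\,\|D^{s+2}c\|$, which is absorbed.
\end{itemize}
This yields
\[
\|(n,c)\|_{X_T}\le C_1e^{CT}\bigl(\|(n_0,c_0)\|+\|\tilde n\nabla\tilde c\|_{L^2_TH^s}\bigr).
\]

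\textbf{Nonlinear bound (the main step).} The key estimate is that of $\|\tilde n\nabla\tilde c\|_{L^2_TH^s}$. Since $H^s$ is an algebra for $s>d/2$,
\[
\|\tilde n\nabla\tilde c\|_{H^s}\lesssim\|\tilde n\|_{H^s}\|\tilde c\|_{H^{s+1}},
\]
so that
\[
\|\tilde n\nabla\tilde c\|_{L^2_TH^s}\lesssim T^{1/2}R^2 .
\]
Note that only $H^s$ regularity of the product is needed, because the divergence is absorbed by the $n$-dissipation via $\langle D^k\nabla\cdot F,D^kn\rangle\le\|D^kF\|\,\|\nabla D^kn\|$. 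The difference estimate follows in the same way by bilinearity, giving a Lipschitz constant $\lesssim T^{1/2}R$. Choosing $T$ small, depending only on $\|(n_0,c_0)\|$, makes $\Phi$ a contraction on $B_R$. Time continuity with values in $H^s\times H^{s+1}$ follows from the energy identity (Lions--Magenes). Uniqueness in the class stated in the lemma is obtained by the same difference estimate.

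\textbf{Maximal time and blow-up criterion.} Define $T_{\max}$ as the supremum of the existence times; uniqueness allows local solutions to be glued together. Since the local time $T$ depends only on an upper bound for $\|(n_0,c_0)\|_{H^s\times H^{s+1}}$, suppose $T_{\max}<\infty$ and the norm stays bounded along a sequence $t\nearrow T_{\max}$. Restarting from a time close enough to $T_{\max}$ then extends the solution beyond $T_{\max}$, a contradiction. Hence the norm cannot remain bounded along any sequence, which gives $\lim_{t\nearrow T_{\max}}(\|n\|_{H^s}+\|c\|_{H^{s+1}})=\infty$. Smoothness of the solution for $t>0$ follows from parabolic bootstrapping.

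The only delicate point is placing the derivative of the product at the correct regularity level so that no loss occurs; the algebra property of $H^s$ together with the extra derivative gained on $c$ handles this.
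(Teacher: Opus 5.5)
Your proposal is correct and follows the route the paper intends: the paper omits the proof and calls it a standard fixed-point argument, and your contraction in $\mathcal{C}([0,T];H^{s}\times H^{s+1})\cap L^{2}(0,T;H^{s+1}\times H^{s+2})$, using $\|n\nabla c\|_{H^{s}}\lesssim\|n\|_{H^{s}}\|c\|_{H^{s+1}}$ with the divergence absorbed by the $n$-dissipation, is exactly that argument. Your continuation step is also sound: because the local time depends only on an upper bound for the data norm, it yields the full limit $\lim_{t\nearrow T_{\rm max}}$ in the blow-up criterion, not merely a $\limsup$.
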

\begin{remark}
Note that the blow-up criteria,
 the last line of Lemma~\ref{LEMLWP}, can be refined to 
\begin{equation}\label{BUCRITREF}
\lim_{t\nearrow T_{\rm max}} \|n(\cdot,t)\|_{H^{s}(\R^{d})}=\infty
\end{equation}
because    
$
 \frac{d}{dt}\|c\|_{H^{s+1}(\R^{d})}^{2}+\|c\|_{H^{s+2}(\R^{d})}^{2}+\|c\|_{H^{s+1}(\R^{d})}^{2}\lesssim \|n\|_{H^{s}(\R^{d})}^{2}$  for all $t<T_{\rm max}$. Note also that, by a bootstrapping argument, the solution constructed in Lemma~\ref{LEMLWP} is smooth for all $t\in(0,T_{\rm max})$.
\end{remark}

In the following Lemma~\ref{KATOTYPELEM}, we introduce a Kato-Ponce type inequality which is useful for our purpose. We omit the proof since
it is similar to that in  \cite[Lemma~3.4]{MB03}, where the classical H\"older and  Gagliardo-Nirenberg inequalities are the main ingredients.
\begin{lemma}\label{KATOTYPELEM}
Let $\alpha\ge0$, $\beta\ge0$ be integers. There exists $C>0$ such that for all $f,g\in (L^{\infty}\cap H^{\alpha+\beta})(\R^{d})$
\begin{equation}\label{KATO1}
\|D^{\alpha}(fD^{\beta}g)\|_{L^{2}(\R^{d})}\le C(\|D^{\alpha+\beta}f\|_{L^{2}(\R^{d})}\|g\|_{L^{\infty}(\R^{d})}+\|f\|_{L^{\infty}(\R^{d})}\|D^{\alpha+\beta}g\|_{L^{2}(\R^{d})}).
\end{equation}
\end{lemma}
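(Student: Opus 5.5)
The plan is to reduce \eqref{KATO1} to the classical Kato--Ponce/Moser-type product estimate of \cite[Lemma~3.4]{MB03}. I will first pass from the fractional-looking operator $D^{k}=(-\Delta)^{k/2}$ to ordinary derivatives: by Plancherel, $\|D^{k}h\|_{L^2}\simeq \sum_{|\gamma|=k}\|\partial^\gamma h\|_{L^2}$ for every integer $k\ge0$, since $|\xi|^{2k}\simeq\sum_{|\gamma|=k}|\xi^\gamma|^2$. It therefore suffices to bound $\|\partial^{\gamma}(f\,D^\beta g)\|_{L^2}$ for $|\gamma|=\alpha$. For the inner operator $D^\beta$ I will split into the cases $\beta$ even and $\beta$ odd. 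If $\beta$ is even, $D^\beta=(-\Delta)^{\beta/2}$ is a sum of pure derivatives $\partial^{\theta}$ with $|\theta|=\beta$. If $\beta$ is odd, I write $D^\beta=\sum_j R_j\partial_j(-\Delta)^{(\beta-1)/2}$ with the Riesz transforms $R_j$, so $D^\beta g=\sum_j\partial^{\theta_j}\tilde g_j$ with $\tilde g_j=R_j g$ and $|\theta_j|=\beta$. The price is that $\tilde g_j$ must be controlled in $L^\infty$, where $R_j$ is not bounded.

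With $D^\beta g=\partial^\theta \tilde g$, Leibniz gives a sum of terms $\partial^{\gamma_1}f\,\partial^{\gamma_2+\theta}\tilde g$ with $|\gamma_1|+|\gamma_2|=\alpha$. Set $k=|\gamma_1|\in[0,\alpha]$ and $N=\alpha+\beta$. Then H\"older with exponents $p=2N/k$ and $q=2N/(N-k)$, followed by the Gagliardo--Nirenberg inequality
\[
\|\partial^{k}h\|_{L^{2N/k}}\lesssim \|h\|_{L^\infty}^{1-k/N}\|D^{N}h\|_{L^2}^{k/N},
\]
gives
\[
\|f\|_{L^\infty}^{1-k/N}\|D^Nf\|_{L^2}^{k/N}\,\|\tilde g\|_{L^\infty}^{k/N}\|D^N \tilde g\|_{L^2}^{1-k/N}.
\]
Young's inequality then bounds this by $\|D^Nf\|_{L^2}\|\tilde g\|_{L^\infty}+\|f\|_{L^\infty}\|D^N\tilde g\|_{L^2}$. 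Since $\|D^N\tilde g\|_{L^2}=\|D^Ng\|_{L^2}$ (Riesz transforms are $L^2$-isometric on the Fourier side up to the unit multiplier), the only remaining issue is $\|\tilde g\|_{L^\infty}$. The endpoint cases $k=0$ and $k=N$ need no interpolation; they are already of the stated form.

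The main obstacle is the odd-$\beta$ case, where the $L^\infty$ norm of $R_jg$ is not controlled by $\|g\|_{L^\infty}$. I will try to avoid this by never moving Riesz transforms onto $g$ at the interpolation step. Concretely, I would apply Gagliardo--Nirenberg in the form $\|D^{j}h\|_{L^{2N/j}}\lesssim\|h\|_{L^\infty}^{1-j/N}\|D^Nh\|_{L^2}^{j/N}$ directly with the fractional-type operator $D^{j}$, which is valid for integer $j$ via Littlewood--Paley theory; in MB03 this is exactly how $D^s$ interpolation is used. To make this work I will redo the Leibniz step on the Fourier side: split $\widehat{fD^\beta g}$ by Littlewood--Paley (paraproduct) into high--low, low--high and high--high pieces. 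In each piece the derivative $D^{\alpha}$ falls essentially on the higher-frequency factor, so the result is bounded by $\|D^Nf\|_{L^2}\|g\|_{L^\infty}$ or $\|f\|_{L^\infty}\|D^Ng\|_{L^2}$. Only Bernstein and $\|\Delta_jg\|_{L^\infty}\lesssim\|g\|_{L^\infty}$ are used, so no Riesz transform ever meets $L^\infty$. This paraproduct route also handles the even case uniformly, so I would ultimately present the proof that way and keep the Leibniz/Gagliardo--Nirenberg computation only as motivation.
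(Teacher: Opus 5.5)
Your final argument, a homogeneous Littlewood--Paley paraproduct splitting of $f\,D^{\beta}g$, is a valid proof, but it is not the paper's route. The paper omits the proof and cites the Moser-type argument of \cite[Lemma~3.4]{MB03}: Leibniz, H\"older with exponents $2N/k$ and $2N/(N-k)$, Gagliardo--Nirenberg, then Young. That is exactly the computation you kept only as ``motivation.'' The obstacle that made you abandon it comes from moving $R_j$ onto $g$ before interpolating, and it disappears if you do not. In a term $\partial^{\gamma_1}f\,\partial^{\gamma_2}D^{\beta}g$ with $|\gamma_1|=k$, the $g$-factor carries $N-k\ge\beta$ derivatives. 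It is placed in $L^\infty$ only when $k=N$, which forces $\beta=0$. Whenever $\beta\ge1$ it sits in $L^{q}$ with $q=2N/(N-k)\in[2,\infty)$, where Riesz transforms are bounded. Hence $\|\partial^{\gamma_2}D^{\beta}g\|_{L^q}\lesssim\sum_{|\mu|=N-k}\|\partial^{\mu}g\|_{L^q}\lesssim\|g\|_{L^\infty}^{k/N}\|D^{N}g\|_{L^2}^{1-k/N}$, and the elementary proof closes. In the paper's applications $D^{1}g$ is really $\nabla c$, so no Riesz transform appears at all.

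The elementary route uses only integer orders and classical inequalities. Your paraproduct route avoids Gagliardo--Nirenberg and would extend to fractional $\alpha,\beta$, but it needs more care than your sketch shows in the high--high remainder $R=\sum_j\dot\Delta_j f\,\tilde\Delta_j D^{\beta}g$. There the output is not localized at frequency $2^{j}$, so the claim that ``$D^\alpha$ falls on the higher-frequency factor'' does not apply directly. Instead you must estimate $2^{k\alpha}\|\dot\Delta_k R\|_{L^2}\lesssim\sum_{j\ge k-3}2^{(k-j)\alpha}\,2^{jN}\|\dot\Delta_j f\|_{L^2}\|g\|_{L^\infty}$ and sum in $\ell^2$ with a discrete Young inequality. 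This needs $\alpha>0$; the case $\alpha=0$ is just H\"older. The decomposition must also be the homogeneous one, since the right-hand side controls only $\dot H^{N}$ seminorms and not $\|f\|_{L^2}$. Finally, the individual $R_j$ are not $L^2$ isometries, only contractions, but that is all you use.
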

We close this section by deriving the Duhamel formula for \eqref{eq_crt}. 
Applying the Fourier transform to  \eqref{eq_crt} leads to  
\[
\partial_{t}\widehat{n}+|\xi|^{2}\widehat{n}-A|\xi|^{2}\widehat{c}= -i\xi\cdot\widehat{ (n \nabla c)},\qquad \partial_{t}\widehat{c}-\widehat{n}+(1+|\xi|^{2})\widehat{c}=0,
\]
 or equivalently,
\begin{equation} \label{FTROFEQ}
\left\{
\begin{aligned} 
&\widehat{n}(t) = e^{-|\xi|^2 t} \widehat{n_0} - \int_0^t e^{-|\xi|^2(t-\tau)} i \xi \cdot (\widehat{n \nabla c}+Ai\xi \,\widehat{c}) \,\ud \tau, \\
&\widehat{c}(t) = e^{-(1+|\xi|^2) t} \widehat{c_0} + \int_0^t e^{-(1+|\xi|^2)(t-\tau)} \widehat{ n} \,\ud \tau.
\end{aligned}
\right.
\end{equation}
By denoting 
$
\widehat{u}:=(\widehat{n},\widehat{c})^{\mathsf{T}}$, 
we can rewrite it  as 
\begin{equation}\label{FTROFEQU}
\partial_t \widehat{u} + L_A(\xi) \widehat{u}  =  -i\xi\cdot\widehat{ (n \nabla c)}e_1,\,\,\mbox{ where }\,\, L_A(\xi) := \begin{pmatrix}
	|\xi|^2 & -A|\xi|^2 \\
	-1 & 1+|\xi|^2
\end{pmatrix}. 
\end{equation}
Note that solving the characteristic equation,
$
\left| L_A(\xi) - \lambda I \right| = (|\xi|^2 - \lambda)(1+|\xi|^2 - \lambda) - A|\xi|^2$,
we have two eigenvalues
\[
\lambda_{\pm}=\lambda_{\pm}(|\xi|):= \frac {1+2|\xi|^2 \pm \sqrt{1+4A|\xi|^2}}{2},
\]
and its complex conjugates, $\overline{\lambda_{\pm}}$, are eigenvalues of $L_A^{*}$, the conjugate transpose of $L_A$. Thus, for given $A>0$, we have $L_A^{*}a_{\pm}=\overline{\lambda_{\pm}}a_{\pm}$ with eigenvectors  
\[
a_{\pm}(\xi):= \bke{\frac{-2}{1\pm\sqrt{1+4A|\xi|^{2}}},1}^{\mathsf{T}}.
\] 
Moreover, it holds that
$
	  \sum_{l=+,-}b_{l}\,\overline{a_{l}}^{\mathsf{T}}
	= I
$
with
\[
b_{\pm}(\xi) := \frac{\pm 1}{\sqrt{1+4A|\xi|^{2}}}
\bke{
-A|\xi|^{2},
\frac{1\pm\sqrt{1+4A|\xi|^{2}}}{2}
}^{\mathsf{T}}
\]
and thus, we have the decomposition formula
\begin{equation*}
  w =I  w= \sum_{l=+,-} b_l\overline{\langle  a_l, w  \rangle} =\sum_{l=+,-} \langle  w,a_l \rangle b_l\quad\mbox{ for all } w\in\mathbb{C}^{2}.
\end{equation*}
Here,  $\langle v,w\rangle$ denotes the inner product of two vectors $v,w \in\mathbb{C}^{2}$ and its value is  $v^{\mathsf{T}} \overline{w}$. 
Now, from   
  \[
 \langle L_{A} w,  a_\pm \rangle = \langle w, L_{A}^{*} a_\pm \rangle= \lambda_{\pm} \langle  w, a_\pm \rangle \quad\mbox{ for all } w\in\mathbb{C}^{2}
\]
and \eqref{FTROFEQU}, 
we have
$
\rd_t \langle\widehat{u}, a_\pm \rangle b_\pm+ \lambda_{\pm} \langle   \widehat{ u}, a_\pm \rangle  b_\pm =-\langle i\xi\cdot \widehat{(n \nabla c)}e_1, a_\pm \rangle b_\pm$,
which entails the Duhamel formula for $ \widehat{u} =\sum_{l=+,-} \langle  \widehat{u},a_l \rangle b_l$, 
\begin{equation}\label{PREdf_u}
	\begin{aligned}
		\langle \widehat{u},a_{\pm} \rangle b_{\pm} = e^{-\lambda_{\pm}t} \langle \widehat{u_0},a_{\pm} \rangle b_{\pm} - \int_0^t e^{-\lambda_{\pm}(t-\tau)} \langle i\xi \cdot\widehat{  (n \nabla c)}e_1, a_{\pm} \rangle b_{\pm} \,\ud \tau.
	\end{aligned}
\end{equation}
\begin{remark}\label{RMK22}
Note that $\lambda_{-}$
	is   non-negative for $A \leq 1$, but is negative at some $\xi$  for $A > 1$. Thus, it is expected that \eqref{eq_crt} is nonlinear stable for $A \leq 1$ and unstable for $A > 1$. This corresponds to the result of the parabolic-elliptic counterpart in \cite{CKKW21}, where the cases $A < 1$ and $A > 1$ are considered. 
\end{remark}
\section{Subcritical Case: Proof of Theorem~\ref{thm1}}\label{SEC11}
Throughout this section, we assume the hypotheses of Theorem~\ref{thm1}. Let $(n,c)$ be the solution given by Lemma~\ref{LEMLWP}. 
\subsection{A Priori Estimates}
We first show that $(n,c)$ or $(n,\nabla c)$ is small 
in $L^{\infty}_{x,t}$. Since $\|f\|_{L^{\infty}(\R^d)}\lesssim \|\widehat{f}\|_{L^{1}(\R^d)}$ for all $f\in \mathcal{S}(\mathbb{R}^{d})$, it is enough to show the smallness of either $(\widehat{n}, \widehat{c})$ or $(\widehat{n}, \xi\widehat{c})$ in $(L^{1}(\R^d))^{2}$ as long as the solution exists. 
To this end, we begin with a priori estimates for $\widehat{n}$ and $\widehat{c}$, established in the following lemma. For the proof, see Appendix~\ref{secA1}.
\begin{lemma}\label{REVLEM31}
Let the same assumptions as in Theorem~\ref{thm1} be satisfied, and let $j=0$ or $1$.    It holds that for $T<T_{\rm max}$ 
\begin{equation}\label{est_NABLAn_L2+}
\int_{\R^{d}} \| |\xi|\widehat{n}\|_{L^2(0,T)} \,\ud \xi \leq  \frac{\| \widehat{n_0} \|_{L^1(\R^{d})} }{1-A} +\frac{A}{1-A}    \||\xi|^{j} \widehat{c_0} \|_{L^1(\R^{d})}+  \frac{\int_{\R^{d}} \| \widehat{n \nabla c} \|_{L^2(0,T)} \,\ud \xi}{1-A},
\end{equation}
\begin{equation}\label{est_NABLAc_L2+}
\int_{\R^{d}} \| |\xi|\widehat{c} \|_{L^2(0,T)} \,\ud \xi \leq  \frac{\| \widehat{n_0}\|_{L^1(\R^{d})} }{1-A} +\frac{  \| |\xi|^{j}\widehat{c_0} \|_{L^1(\R^{d})}}{1-A}+  \frac{\int_{\R^{d}} \| \widehat{n \nabla c} \|_{L^2(0,T)}\,\ud \xi}{1-A},
\end{equation}
 \begin{equation}\label{est_nc_L2+}
	\begin{aligned}
		\int_{\R^{d}} \| \widehat{n \nabla c} \|_{L^2(0,T)} \,\ud \xi  
		&\leq  \| \widehat{n_0} \|_{L^1(\R^{d})} \||\xi|^{j} \widehat{c_0} \|_{L^1(\R^{d})} 
		+  \| \widehat{n_0} \|_{L^1(\R^{d})} \int_{\R^{d}} |\xi|\| \widehat{ n} \|_{L^2(0,T)} \,\ud \xi \\
		&\quad+  \| |\xi|^{j}\widehat{c_0} \|_{L^1(\R^{d})}  \biggr{(}\int_{\R^{d}} \| \widehat{n \nabla c} \|_{L^2(0,T)} \,\ud \xi+A\int_{\R^{d}} \| |\xi|\widehat{c}  \|_{L^2(0,T)} \,\ud \xi \biggr{)} \\
		&\quad+ \biggr{(}\int_{\R^{d}} \| \widehat{n \nabla c} \|_{L^2(0,T)} \,\ud \xi+A\int_{\R^{d}} \||\xi| \widehat{c}  \|_{L^2(0,T)} \,\ud \xi \biggr{)} \int_{\R^{d}} \||\xi| \widehat{n} \|_{L^2(0,T)} \,\ud \xi,
	\end{aligned}
\end{equation}
and 
\begin{equation}\label{est_L1+}
	\begin{aligned}
		\sup_{t\le T} \| \widehat{n}(\cdot,t) \|_{L^1(\R^{d})} \leq  \| \widehat{n_0} \|_{L^1(\R^{d})} + \int_{\R^{d}} \| \widehat{n \nabla c} \|_{L^2(0,T)} \,\ud \xi+A\int_{\R^{d}} \|\xi|| \widehat{ c}  \|_{L^2(0,T)} \,\ud \xi,
	\end{aligned}
\end{equation}
\begin{equation}\label{est_L1++}
	\begin{aligned}
 &\sup_{t \le T} \| \widehat{c}(\cdot,t) \|_{L^1(\R^{d})} \le \| \widehat{c_0} \|_{L^1(\R^{d})} 
		+ \sup_{t\le T} \| \widehat{n}(\cdot,t) \|_{L^1(\R^{d})},\\
 &\sup_{t \le T} \| |\xi|\widehat{c}(\cdot,t) \|_{L^1(\R^{d})} \le \||\xi| \widehat{c_0} \|_{L^1(\R^{d})} 
		+ \int_{\R^{d}} \||\xi| \widehat{n} \|_{L^2(0,T)} \,\ud \xi.
	\end{aligned}
\end{equation}
\end{lemma}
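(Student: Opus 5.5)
We work pointwise in $\xi$ on the Fourier side, with frequency-wise $L^2$-in-time (Plancherel-in-time) energy estimates, and only at the end integrate in $\xi$. Write $F:=-i\xi\cdot\widehat{n\nabla c}$ for the nonlinear forcing, so that
\[
\partial_t\widehat n+|\xi|^2\widehat n=A|\xi|^2\widehat c+F,\qquad \partial_t\widehat c+(1+|\xi|^2)\widehat c=\widehat n .
\]

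\textbf{Step 1: estimates for $\widehat c$.} For fixed $\xi$, multiply the $\widehat c$ equation by $\overline{\widehat c}$ and take real parts:
\[
\tfrac12\tfrac{d}{dt}|\widehat c|^2+(1+|\xi|^2)|\widehat c|^2\le |\widehat n||\widehat c|.
\]
Using the form $\tfrac{d}{dt}|\widehat c|+(1+|\xi|^2)|\widehat c|\le|\widehat n|$ (valid where $\widehat c\neq0$), the time-$L^2$ version of Young's inequality for the convolution with $e^{-(1+|\xi|^2)t}$ gives
\[
\||\xi|\widehat c\|_{L^2(0,T)}\le |\xi|\,\|e^{-(1+|\xi|^2)t}\|_{L^2}|\widehat{c_0}|+\frac{|\xi|}{1+|\xi|^2}\|\widehat n\|_{L^2(0,T)} .
\]
Here $|\xi|\|e^{-(1+|\xi|^2)t}\|_{L^2}\le \min(|\xi|^j,1)$. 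The same convolution argument with $\|\cdot\|_{L^1_t}\le$ sup bounds yields \eqref{est_L1++}: for the sup of $|\widehat c|$ use $\|e^{-(1+|\xi|^2)t}\|_{L^1}\le1$. For $|\xi|\widehat c$ use Cauchy--Schwarz, $\|e^{-(1+|\xi|^2)t}\|_{L^2}\le 1/|\xi|$, and the fact that the weight $|\xi|$ is transferred to $\widehat n$. Integrating in $\xi$ gives \eqref{est_L1++}.

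\textbf{Step 2: the coupled system \eqref{est_NABLAn_L2+}--\eqref{est_NABLAc_L2+}.} The energy estimate for $\widehat n$ gives
\[
\tfrac12\tfrac{d}{dt}|\widehat n|^2+|\xi|^2|\widehat n|^2\le A|\xi|^2|\widehat c||\widehat n|+|F||\widehat n| .
\]
Integrating in time, and using $|F|\le|\xi||\widehat{n\nabla c}|$ with Cauchy--Schwarz, we get
\[
\||\xi|\widehat n\|_{L^2}^2\le \tfrac12|\widehat{n_0}|^2+\big(A\||\xi|\widehat c\|_{L^2}+\|\widehat{n\nabla c}\|_{L^2}\big)\||\xi|\widehat n\|_{L^2} .
\]
Hence
\[
\||\xi|\widehat n\|_{L^2}\le|\widehat{n_0}|+A\||\xi|\widehat c\|_{L^2}+\|\widehat{n\nabla c}\|_{L^2} .
\]
Step 1 gives $\||\xi|\widehat c\|_{L^2}\le|\xi|^j|\widehat{c_0}|+\||\xi|\widehat n\|_{L^2}$ after bounding $\frac{|\xi|}{1+|\xi|^2}\|\widehat n\|\le\||\xi|\widehat n\|$; this crude bound is too lossy at low frequency. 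We therefore use the sharper relation $\frac{|\xi|^2}{1+|\xi|^2}\le1$: writing Step 1 as $\||\xi|\widehat c\|\le\dots+\frac{1}{1+|\xi|^2}\||\xi|\widehat n\|$ and substituting, the coefficient of $\||\xi|\widehat n\|$ on the right is $A<1$. Absorbing it gives \eqref{est_NABLAn_L2+} pointwise in $\xi$ with factor $\frac1{1-A}$. Feeding this back into Step 1 gives \eqref{est_NABLAc_L2+}, since $1+\frac{A}{1-A}=\frac1{1-A}$. Integrating in $\xi$ completes this step. Running this sharp absorption pointwise in $\xi$ before integrating is the delicate point, and is exactly where $A<1$ is used.

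\textbf{Step 3: estimate \eqref{est_L1+} for $\sup_t|\widehat n|$.} This follows from Duhamel \eqref{FTROFEQ}: write $\widehat n$ as the free part plus the convolution of $e^{-|\xi|^2 t}$ with $|\xi|\,(A|\xi||\widehat c|+|\widehat{n\nabla c}|)$. Cauchy--Schwarz in time with $|\xi|\|e^{-|\xi|^2t}\|_{L^2}\le 2^{-1/2}\le1$ gives the bound, and we then integrate in $\xi$.

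\textbf{Step 4: the product estimate \eqref{est_nc_L2+}.} We have $\widehat{n\nabla c}=(2\pi)^{-d}\,\widehat n*(i\eta\widehat c)$. By Minkowski's inequality,
\[
\int\|\widehat{n\nabla c}\|_{L^2_t}\,d\xi\le\iint\big\||\widehat n(\xi-\eta)||\eta||\widehat c(\eta)|\big\|_{L^2_t}\,d\eta\, d\xi .
\]
In each product we put one factor in $L^\infty_t$ and the other in $L^2_t$, and then use Steps 1 and 3. Placing $\sup_t|\widehat n|$ by \eqref{est_L1+} against $\||\eta|\widehat c\|_{L^2}$ is not quite the right pairing. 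Instead, split $|\eta|\widehat c(\eta)$ through its Duhamel formula into the initial part $e^{-(1+|\eta|^2)t}|\eta|\widehat{c_0}$ plus the forced part. This produces terms of the form $\widehat{c_0}\times$(time-$L^2$ norm of $\widehat n$-type terms) and $(\widehat{n_0}+\text{nonlinear}+A|\xi|\widehat c)\times\||\xi|\widehat n\|$, matching the four lines of \eqref{est_nc_L2+}.

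The main bookkeeping obstacle is choosing, in Step 4, which factor goes in $L^\infty_t$ so that only $\||\xi|\widehat n\|_{L^2}$ (not $\|\widehat n\|_{L^2}$) appears. This is resolved by writing $\widehat n$ via its Duhamel formula and applying the heat-kernel bound $|\xi|\|e^{-|\xi|^2t}\|_{L^2}\lesssim1$.
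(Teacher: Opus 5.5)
Your argument is correct in substance and is essentially the paper's. Like the paper, you use frequency-wise time-$L^2$ bounds from \eqref{FTROFEQ} with Young's inequality in $t$, absorb $A\||\xi|\widehat c\|_{L^2(0,T)}$ using $A<1$, apply Cauchy--Schwarz against $|\xi|\|e^{-|\xi|^2t}\|_{L^2(0,\infty)}\le 1$ for \eqref{est_L1+}, and split by Duhamel inside the convolution. Your Step 4 is the paper's $J_1,\dots,J_4$ regrouped as $(J_1+J_3)+(J_2+J_4)$, with the $\widehat n$ factor bounded in $L^\infty_t$ frequency-wise. The only real variation is Step 2. There you get $\||\xi|\widehat n\|_{L^2(0,T)}\le|\widehat{n_0}|+A\||\xi|\widehat c\|_{L^2(0,T)}+\|\widehat{n\nabla c}\|_{L^2(0,T)}$ from the energy identity and a quadratic inequality, rather than from Duhamel with $\||\xi|^2e^{-|\xi|^2t}\|_{L^1(0,\infty)}=1$. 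Both are equally short, and absorbing pointwise in $\xi$ instead of after integrating (as the paper does) is equally fine.

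Some details need fixing.

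First, the bound $\||\xi|\widehat c\|_{L^2(0,T)}\le|\xi|^j|\widehat{c_0}|+\||\xi|\widehat n\|_{L^2(0,T)}$ is not ``too lossy at low frequency''. Substituting it already gives coefficient exactly $A$, which is what the paper uses. Moreover $(1+|\xi|^2)^{-1}$ equals $1$ at $\xi=0$, so your sharper version gains nothing there.

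Second, for the first line of \eqref{est_L1++}, you cannot bound the time convolution pointwise in $\xi$ by $\sup_{\tau}|\widehat n(\xi,\tau)|$ and then integrate. That gives $\int_{\R^d}\sup_{\tau\le T}|\widehat n(\xi,\tau)|\,\ud\xi$, which is larger than $\sup_{\tau\le T}\|\widehat n(\cdot,\tau)\|_{L^1}$, so you prove a weaker inequality than stated. This is harmless for Proposition~\ref{NCSMALLLEM}, but it is not the claim. Instead, use $e^{-(1+|\xi|^2)(t-\tau)}\le e^{-(t-\tau)}$, integrate in $\xi$ first by Fubini, and only then bound $\int_0^te^{-(t-\tau)}\|\widehat n(\cdot,\tau)\|_{L^1}\,\ud\tau$.

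Third, for the second line of \eqref{est_L1++}, the kernel bound you need is $\|e^{-(1+|\xi|^2)t}\|_{L^2(0,\infty)}\le1/\sqrt2$, not $\le1/|\xi|$. The weight $|\xi|$ must stay on $\widehat n$.

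Finally, the Step 4 pairing you rejected does work. Take $\int\sup_t|\widehat n|\,\ud\zeta$ (from the frequency-wise form of Step 3) times $\int\||\eta|\widehat c\|_{L^2(0,T)}\,\ud\eta$, and bound the latter by Step 1. This expands to exactly \eqref{est_nc_L2+}, and your Duhamel split of $\widehat c$ is just that Step 1 bound inlined. The only requirement is that you use $\int\sup_t$, i.e.\ the frequency-wise version of \eqref{est_L1+}, not \eqref{est_L1+} as stated, because Minkowski puts the supremum inside the $\eta$-integral.
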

Next, we show that either $(\widehat{n}, \widehat{c})$ or $(\widehat{n}, \xi\widehat{c})$ remains small in $(L^{1}(\R^d))^{2}$.
\begin{proposition}\label{NCSMALLLEM}
Let the same assumptions as in Theorem~\ref{thm1} be satisfied, and let $j=0$ or $1$.  
There exists $\delta_{j}>0$ such that if 
\begin{equation}\label{NCSMALLLEMASS}
	\begin{aligned}
		\|\widehat{n_0}\|_{L^{1}(\R^d)} +  \||\xi|^{j}\widehat{c_0}\|_{L^{1}(\R^d)} \leq  \delta_j,
	\end{aligned}
\end{equation}
then for some $C=C(A)>0$, 
\[
		\sup_{t<T_{\rm max}}(\|\widehat{n}(\cdot,t)\|_{L^{1}(\R^d)} +  \||\xi|^{j}\widehat{c}(\cdot,t)\|_{L^{1}(\R^d)}) \leq C \delta_j.
\]
\end{proposition}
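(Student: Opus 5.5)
Set $\delta:=\|\widehat{n_0}\|_{L^1}+\||\xi|^j\widehat{c_0}\|_{L^1}$ and, for $T<T_{\rm max}$, introduce the three time-integrated quantities
\[
X(T):=\int_{\R^d}\||\xi|\widehat n\|_{L^2(0,T)}\,\ud\xi,\qquad Y(T):=\int_{\R^d}\||\xi|\widehat c\|_{L^2(0,T)}\,\ud\xi,\qquad Z(T):=\int_{\R^d}\|\widehat{n\nabla c}\|_{L^2(0,T)}\,\ud\xi .
\]
We would close a bootstrap on $Z$ using Lemma~\ref{REVLEM31}, and then read off the $L^\infty_t L^1_\xi$ bounds from \eqref{est_L1+}--\eqref{est_L1++}.

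\textbf{Step 1: control $X$ and $Y$ by $\delta$ and $Z$.} By \eqref{est_NABLAn_L2+} and \eqref{est_NABLAc_L2+},
\[
X+Y\le K(\delta+Z),\qquad K=K(A)=\frac{2+A}{1-A}.
\]
Inserting this into \eqref{est_nc_L2+} gives a quadratic-type inequality
\[
Z\le \delta^2+\delta K(\delta+Z)+\delta(1+AK)(\delta+Z)+(1+AK)(\delta+Z)\,K(\delta+Z)\le C_1\delta^2+C_1\delta Z+C_1Z^2,
\]
with $C_1=C_1(A)$.

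\textbf{Step 2: continuity argument.} We need $Z(T)$ to be finite and continuous in $T$, with $Z(0)=0$. This should follow from the regularity in Lemma~\ref{LEMLWP}: $n\nabla c\in L^2(0,T;H^{s})$, and $\int\|\cdot\|_{L^2_t}\ud\xi$ is controlled through a weighted $L^2_\xi$ Cauchy--Schwarz with $s>d/2$. If the local regularity does not immediately give finiteness, we would use a standard approximation. We expect this to be the main technical point.

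Choose $\delta_j$ so small that $C_1\delta_j\le 1/4$ and $16C_1^2\delta_j^2<1$. On any interval where $Z\le 4C_1\delta^2$, we get
\[
Z\le C_1\delta^2+\tfrac14 Z+16C_1^3\delta^4,
\]
hence $Z\le \tfrac43\bigl(C_1\delta^2+16C_1^3\delta^4\bigr)<4C_1\delta^2$, which is a strict improvement of the bootstrap hypothesis. By continuity, $Z(T)\le 4C_1\delta^2$ for all $T<T_{\rm max}$, and consequently $X+Y\le C\delta$.

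\textbf{Step 3: conclusion.} From \eqref{est_L1+},
\[
\sup_{t<T_{\rm max}}\|\widehat n(t)\|_{L^1}\le \delta+Z+AY\le C\delta.
\]
For $j=0$, the first line of \eqref{est_L1++} gives $\|\widehat c\|_{L^1}\le\delta+C\delta$. For $j=1$, the second line gives $\||\xi|\widehat c\|_{L^1}\le\delta+X\le C\delta$.

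Beyond the continuity/finiteness of $Z(T)$ in Step 2, the only delicate point is the bookkeeping in Step 1. The right-hand side of \eqref{est_nc_L2+} contains $Z$ linearly with coefficient $O(\delta)$ and quadratically, and $1-A>0$ is exactly what keeps $K$ finite. This is why the argument works only for $A<1$, and it is the place where the constants degenerate as $A\to1$.
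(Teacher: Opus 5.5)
Your proposal is correct and is essentially the paper's proof. The paper also runs a continuity argument on $\int_{\R^d}\|\widehat{n\nabla c}\|_{L^2(0,T)}\,\ud\xi$ after substituting \eqref{est_NABLAn_L2+}--\eqref{est_NABLAc_L2+} into \eqref{est_nc_L2+}, and then reads off the bounds from \eqref{est_L1+}--\eqref{est_L1++}. For the continuity step you flagged, the paper uses exactly the weighted Cauchy--Schwarz argument you suggest: it applies it to the increment via $\|f\|_{L^2(0,T)}-\|f\|_{L^2(0,t)}\le\|f\|_{L^2(t,T)}$ together with $(1+|\xi|^2)^{-s}\in L^1$, so no approximation argument is needed.
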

\begin{proof}
Denote 
\[
X(t):= \int_{\R^{d}} \| \widehat{n \nabla c} (\xi,\cdot)\|_{L^2(0,t)} \,\ud \xi,\qquad t<T_{\rm max}.
\] 
We first show the continuity of $X$ in $[0,T_{\rm max})$.
Let $s>\frac{d}{2}$ be integer and $j=0$ or $1$. For $0\le t<T<T_{\rm max}$, we use $X'\ge0$, H\"older's inequality,   $(a_1-a_2)^{2}\le a_1^{2}-a_2^{2}$ for all $a_{1}\ge a_{2}\ge0$, $  (1+|\xi|^2)^{-s}\in L^{1}(\R^{d})$, and Fubini's theorem to obtain
\[
\begin{aligned}
X(T)-X(t)
& = \int_{\R^{d}}\bket{ \| \widehat{n \nabla c}(\xi,\cdot)(1+|\xi|^2)^{\frac{s}{2}} \|_{L^2(0,T)}- \| \widehat{n \nabla c}(\xi,\cdot)(1+|\xi|^2)^{\frac{s}{2}} \|_{L^2(0,t)}} (1+|\xi|^2)^{-\frac{s}{2}} \,\ud \xi 
\\& \le  \bke{\int_{\R^{d}}\bket{ \| \widehat{n \nabla c}(\xi,\cdot)(1+|\xi|^2)^{\frac{s}{2}} \|_{L^2(0,T)}- \| \widehat{n \nabla c}(\xi,\cdot)(1+|\xi|^2)^{\frac{s}{2}} \|_{L^2(0,t)}}^{2}\,\ud \xi}^{\frac{1}{2}}
\\&\qquad\cdot \bke{\int_{\R^{d}}  (1+|\xi|^2)^{-s}\,\ud \xi }^{\frac{1}{2}}
\\&\lesssim \bke{\int_{t}^{T} \int_{\R^{d}} |\widehat{n \nabla c}(\xi,\tau)|^{2}(1+|\xi|^2)^{s}  \,\ud \xi  \,\ud \tau }^{\frac{1}{2}}
\\& \lesssim (T-t)^{\frac{1}{2}}\| n\nabla c\|_{L^{\infty}(t,T;H^{s}(\R^{d}))}.
\end{aligned}
\]
By H\"older's inequality, the embedding inequality and Lemma~\ref{LEMLWP}, we can see that
\[
 \| n\nabla c\|_{L^{\infty}(t,T;H^{s}(\R^{d}))}\lesssim \| n\|_{L^{\infty}(t,T;H^{s}(\R^{d}))}\| c\|_{L^{\infty}(t,T;H^{s+1}(\R^{d}))}<\infty
 \] 
 and thus, $X$ is continuous in $[0,T_{\rm max})$.
In particular,  
\begin{equation}\label{XINISMALL}
\lim_{t\rightarrow 0}X(t)=0.
\end{equation}
We also note that substituting \eqref{est_NABLAn_L2+}--\eqref{est_NABLAc_L2+} into \eqref{est_nc_L2+} gives
for some $C=C(A)>0$
\begin{equation}\label{XESTIMATE}
	\begin{aligned}
		X(t)& \leq  \| \widehat{n_0} \|_{L^1}   \||\xi|^{j} \widehat{c_0} \|_{L^1} + \frac{\| \widehat{n_0} \|_{L^1}}{1-A} \left(   \| \widehat{n_0} \|_{L^1}  +A  \| |\xi|^{j}\widehat{c_0} \|_{L^1}+  X(t) \right) 
		\\ &\qquad+   \frac{ \||\xi|^{j} \widehat{c_0} \|_{L^1} }{1-A} ( A\| \widehat{n_0} \|_{L^1} +A \||\xi|^{j} \widehat{c_0} \|_{L^1}+  X(t) )
		\\ &\qquad
		+ \frac{1}{(1-A)^{2}}( \| \widehat{n_0} \|_{L^1}  + A  \||\xi|^{j} \widehat{c_0} \|_{L^1}+   X(t) ) 
		(  A\| \widehat{n_0} \|_{L^1} + A \||\xi|^{j} \widehat{c_0} \|_{L^1}+ X(t))
		\\&\leq  C(\| \widehat{n_0} \|_{L^1}^{2}+  \||\xi|^{j} \widehat{c_0} \|_{L^1}) + C(\| \widehat{n_0} \|_{L^1} + \||\xi|^{j} \widehat{c_0} \|_{L^1}) X(t)+\biggr{(}\frac{X(t)}{1-A}\biggr{)}^{2}.
	\end{aligned}
\end{equation}
Then, due to   \eqref{XINISMALL}--\eqref{XESTIMATE} and the continuity of $X$, there exists  $ \delta_{j}>0$ such that under \eqref{NCSMALLLEMASS}, 
\[
\sup_{t<T_{\rm max}}X(t)\le C \delta_{j}
\] 
for some $C=C(A)>0$.
Plugging it into \eqref{est_NABLAn_L2+} and \eqref{est_NABLAc_L2+} shows that there exists  $C=C(A)>0$ satisfying 
\[
\sup_{t<T_{\rm max}}\biggr{(}\int_{\R^{d}} \| |\xi|\widehat{n}\|_{L^2(0,t)} \,\ud \xi+\int_{\R^{d}} \| |\xi|\widehat{c}\|_{L^2(0,t)} \,\ud \xi  \biggr{)}\le C \delta_{j}.
\]
Hence, by \eqref{est_L1+}--\eqref{est_L1++}, we can deduce the desired result.
\end{proof}
Next, to prove Theorem~\ref{thm1} we prepare some  $L^{2}$-type a priori estimates.
\begin{lemma}
Let the same assumptions as in Theorem~\ref{thm1} be satisfied. For integer $m\ge1$, there exists $C>0$ such that
\begin{equation}\label{eng_each}  
	\begin{aligned}
		\frac {1}{2}&\frac {\ud}{\ud t} ( \|D^{m-1} n\|_{L^{2}(\R^{d})}^2+\|D^{m} c\|_{L^{2}(\R^{d})}^2) \\&+ \Bigr{(}\frac{1-A}{2}-C(\| n \|_{L^{\infty}(\R^{d})}+\| c \|_{L^{\infty}(\R^{d})}) \Bigr{)} (\|D^{m} n\|_{L^{2}(\R^{d})}^2+\|D^{m} c\|_{L^{2}(\R^{d})}^2)   
		+\|D^{m+1} c\|_{L^{2}(\R^{d})}^2  \leq 0,
	\end{aligned}
\end{equation}
\begin{equation}\label{eng_n} 
	\begin{aligned}
\frac {1}{2}&\frac {\ud}{\ud t} \left( \| n \|_{H^m(\R^{d})}^2 + \| D c \|_{H^{m-1}(\R^{d})}^2 \right) \\&
+ \Bigr{(}\frac{1-A}{2} - C(\| n \|_{L^{\infty}(\R^{d})} + \| c \|_{L^{\infty}(\R^{d})}) \Bigr{)} (\| Dn \|_{H^m(\R^{d})}^2 + \| Dc \|_{H^{m }(\R^{d})}^2 )  \leq 0,
	\end{aligned}
\end{equation}
and
\begin{equation}\label{eng_nn} 
	\begin{aligned}
\frac {1}{2}&\frac {\ud}{\ud t} \left( \| n \|_{H^m(\R^{d})}^2 + \| D c \|_{H^{m-1}(\R^{d})}^2 \right) \\&
+ \Bigr{(}\frac{1-A}{2} - C(\| n \|_{L^{\infty}(\R^{d})} + \| \nabla c \|_{L^{\infty}(\R^{d})})\Bigr{)} (\| Dn \|_{H^m(\R^{d})}^2 + \| Dc \|_{H^{m}(\R^{d})}^2 )  \leq 0.
	\end{aligned}
\end{equation}
\end{lemma}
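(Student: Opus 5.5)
The plan is a weighted $L^{2}$ energy estimate, with the nonlinearity absorbed by the $L^{\infty}$ smallness from Proposition~\ref{NCSMALLLEM}. For \eqref{eng_each}, fix an integer $m\ge1$. I would test the $n$-equation of \eqref{eq_crt} with $D^{2m-2}n$ and the $c$-equation with $D^{2m}c$, then add the two identities. Since $-\Delta=D^{2}$ and $\langle n,D^{2m}c\rangle=\langle D^{m}n,D^{m}c\rangle$, the linear part gives
\[
\frac12\frac{\ud}{\ud t}\big(\|D^{m-1}n\|_{L^2}^2+\|D^{m}c\|_{L^2}^2\big)+\|D^{m}n\|_{L^2}^2+\|D^{m}c\|_{L^2}^2+\|D^{m+1}c\|_{L^2}^2-(1+A)\langle D^{m}n,D^{m}c\rangle .
\]
By Young's inequality, $(1+A)|\langle D^{m}n,D^{m}c\rangle|\le\frac{1+A}{2}(\|D^{m}n\|_{L^2}^2+\|D^{m}c\|_{L^2}^2)$. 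This leaves exactly the coercive factor $\frac{1-A}{2}$ in front of $\|D^{m}n\|_{L^2}^2+\|D^{m}c\|_{L^2}^2$, and the full $\|D^{m+1}c\|_{L^2}^2$ survives. This explains the shape of \eqref{eng_each}.

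The nonlinear contribution is $-\langle\nabla\cdot(n\nabla c),D^{2m-2}n\rangle$. After integrating by parts it is bounded by $\|D^{m-1}(n\nabla c)\|_{L^2}\|D^{m}n\|_{L^2}$. To express this in terms of $\|n\|_{L^\infty}$ and $\|c\|_{L^\infty}$ only, I would write $n\nabla c=\nabla(nc)-c\nabla n$. The first piece is handled by Lemma~\ref{KATOTYPELEM} with $\beta=0$, $\alpha=m$:
\[
\|D^{m-1}\nabla(nc)\|_{L^2}\lesssim\|D^{m}n\|_{L^2}\|c\|_{L^\infty}+\|n\|_{L^\infty}\|D^{m}c\|_{L^2}.
\]
For the second piece, $D^{m-1}(c\,\partial_k n)$, I would apply the Kato--Ponce bound with a single $\partial_k$ in place of $D^{\beta}$, $\beta=1$. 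This is the one point I expect to need care, since Lemma~\ref{KATOTYPELEM} is stated with $D^{\beta}$ and $\partial_k=R_kD$ involves a Riesz transform inside a product. The proof of \cite[Lemma~3.4]{MB03} (Leibniz rule plus H\"older and Gagliardo--Nirenberg) works verbatim with $\partial_k$, using that $\|D^{l}f\|_{L^2}\simeq\|\nabla^{l}f\|_{L^2}$. It yields $\|D^{m}c\|_{L^2}\|n\|_{L^\infty}+\|c\|_{L^\infty}\|D^{m}n\|_{L^2}$. Another application of Young's inequality then gives the term $C(\|n\|_{L^\infty}+\|c\|_{L^\infty})(\|D^{m}n\|_{L^2}^2+\|D^{m}c\|_{L^2}^2)$, and \eqref{eng_each} follows.

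For \eqref{eng_n}, I would sum \eqref{eng_each} over levels $1,\dots,m+1$. This produces $\frac12\frac{\ud}{\ud t}(\|n\|_{H^m}^2+\|Dc\|_{H^{m}}^2)$ up to equivalent norms. The dissipation is $\|Dn\|_{H^m}^2+\|Dc\|_{H^m}^2$, and I would simply drop the extra $\|D^{m+2}c\|_{L^2}^2$. The left side of \eqref{eng_n} is written with $\|Dc\|_{H^{m-1}}$, so I would check whether a level should be dropped or whether the norms there are meant up to equivalence, and adjust the range of the sum accordingly.

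For \eqref{eng_nn}, the nonlinear bound must use $\nabla c$ in $L^\infty$ instead of $c$. I would apply Lemma~\ref{KATOTYPELEM}, in its $\partial_k$ version, with $f=n$, $g=\nabla c$ and $\beta=0$:
\[
\|D^{m-1}(n\nabla c)\|_{L^2}\lesssim\|D^{m-1}n\|_{L^2}\|\nabla c\|_{L^\infty}+\|n\|_{L^\infty}\|D^{m}c\|_{L^2}.
\]
For $m\ge2$ the factor $\|D^{m-1}n\|_{L^2}$ is already among the dissipated quantities at the lower level. For the lowest level (testing with $n$ itself), the term is directly $|\langle n\nabla c,\nabla n\rangle|\le\|n\|_{L^\infty}\|\nabla c\|_{L^2}\|\nabla n\|_{L^2}$. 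Summing the levels as before gives \eqref{eng_nn}. The main obstacle throughout is this nonlinear bookkeeping, namely keeping every high-order factor inside the dissipated norms so that only $L^\infty$ norms of the solution multiply them.
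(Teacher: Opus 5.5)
Your derivation of \eqref{eng_each} follows the paper: the same test functions $D^{2m-2}n$ and $D^{2m}c$, the same Young split of $-(1+A)\langle D^m n,D^m c\rangle$, and the same Kato--Ponce bound on $\|D^{m-1}(n\nabla c)\|_{L^2}$. The splitting $n\nabla c=\nabla(nc)-c\nabla n$ is harmless but gains nothing. The piece $c\nabla n$ still needs the $\partial_k$-version of Lemma~\ref{KATOTYPELEM}, and that version applied directly to $n\,\partial_k c$ is what the paper uses with $(\alpha,\beta)=(k,1)$. Your Leibniz/Gagliardo--Nirenberg remark is the right justification for it.

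The gap is in how you assemble \eqref{eng_n} and \eqref{eng_nn}. Inequality \eqref{eng_each} couples $n$ at level $m'-1$ with $c$ at level $m'$. So any sum of it over $m'=1,\dots,M$ gives $\frac12\frac{\ud}{\ud t}\bigl(\sum_{k=0}^{M-1}\|D^k n\|_{L^2}^2+\sum_{k=1}^{M}\|D^k c\|_{L^2}^2\bigr)$, with $n$ always one level below $c$. The functional in \eqref{eng_n} has both $n$ and $c$ up to level $m$. No range, and no nonnegative combination of \eqref{eng_each} alone, produces it. Since the discrepancy sits inside $\frac{\ud}{\ud t}$, "up to equivalent norms" cannot repair it either. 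With $M=m+1$ you get a true but different inequality, with $\|Dc\|_{H^m}^2$ in place of $\|Dc\|_{H^{m-1}}^2$.

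The missing ingredient is a stand-alone top-level $n$-estimate. Add to the sum of \eqref{eng_each} over levels $1,\dots,m$ the bound $\frac12\frac{\ud}{\ud t}\|D^m n\|_{L^2}^2+\|D^{m+1}n\|_{L^2}^2\le\bigl(C(\|n\|_{L^\infty}+\|c\|_{L^\infty})+\frac A2\bigr)(\|D^{m+1}n\|_{L^2}^2+\|D^{m+1}c\|_{L^2}^2)$. Its term $\frac A2\|D^{m+1}c\|_{L^2}^2$ is absorbed by the full $\|D^{m+1}c\|_{L^2}^2$ that \eqref{eng_each} keeps at level $m$. This leaves coefficient $1-\frac A2-C(\cdots)\ge\frac{1-A}{2}-C(\cdots)$ at level $m+1$ for both $n$ and $c$.

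The paper does the equivalent thing directly. It sums the uncoupled $n$-estimates at levels $0,\dots,m$ and the $c$-estimates at levels $1,\dots,m$, then checks that every dissipated level keeps a coefficient of at least $\frac{1-A}{2}$. For \eqref{eng_nn}, use the $\nabla c$-version of the top-level estimate. Its extra term $C\|\nabla c\|_{L^\infty}\|D^m n\|_{L^2}^2$ lies in the dissipation because $m\ge 1$.
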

\begin{proof}
Taking the $L^{2}$ scalar inner product of $\eqref{eq_crt}_{1}$ with $n$ yields
\begin{equation*}
	\begin{aligned}
		\frac {1}{2}\frac {\ud}{\ud t} \|n\|_{L^{2}}^{2} +\|D n\|_{L^{2}}^{2} = \int_{\R^{d}} n \nabla n \cdot \nabla c \,\ud x+ A\int_{\R^{d}} \nabla n \cdot \nabla c \,\ud x,
	\end{aligned}
\end{equation*} 
which gives, after applying H\"older's inequality and Young's inequality, that
\begin{equation}\label{LEM32_0}
\frac {1}{2}\frac {\ud}{\ud t}  \|n\|_{L^{2}}^{2} + \|D n\|_{L^{2}}^{2} \le  (\frac{1}{2}\|n\|_{L^{\infty}}+\frac{A}{2})(\|D n \|_{L^{2}}^{2}+\|D c \|_{L^{2}}^{2}).
\end{equation} 
Next,  let $k\ge1$ be integer. Using $\eqref{eq_crt}_{1}$ and the H\"older inequality, we compute 
 \begin{equation}\label{LEM32_-1}
\begin{aligned}
		\frac {1}{2}\frac {\ud}{\ud t}  \|D^k n\|_{L^{2}}^{2} + \|D^{k+1} n\|_{L^{2}}^2 
	&\le \int |D^{k+1} n||D^{k-1} [\nabla \cdot (n \nabla c)]|  \,\ud x + A\int_{\R^{d}} |D^{k+1}n|  |D^{k+1}c| \,\ud x\\
		&\le \| D^{k+1} n \|_{L^2}  \| D^{k}(n \nabla c)  \|_{L^2} +A  \| D^{k+1} n \|_{L^2} \| D^{k+1} c \|_{L^2}. 		
\end{aligned}
\end{equation}
If we further estimate the right-hand side using Lemma~\ref{KATOTYPELEM} with $(\alpha,\beta)=(k,1)$ and Young's inequality, we can find $C>0$ such that
 \begin{equation}\label{LEM32_1}
	\begin{aligned}
\frac {1}{2}\frac {\ud}{\ud t}  &\|D^k n\|_{L^{2}}^2  + \|D^{k+1} n\|_{L^{2}}^2  
		\\&\le C\| D^{k+1} n \|_{L^2}(  \| D^{k+1}  n  \|_{L^2} \|  c  \|_{L^\infty} + \|   n  \|_{L^\infty} \| D^{k+1}c  \|_{L^2} )+ A \| D^{k+1} n \|_{L^2} \| D^{k+1} c \|_{L^2} \\
		&\leq \Bigr{(}2C (\| n \|_{L^{\infty}} + \| c \|_{L^{\infty}})+\frac{A}{2}\Bigr{)} (\| D^{k+1} n \|_{L^2}^2 + \| D^{k+1} c \|_{L^2}^2).
\end{aligned}
\end{equation}
On the other hand,  we use Lemma~\ref{KATOTYPELEM} with $(\alpha,\beta)=(k,0)$ and Young's inequality to compute the right-hand side of \eqref{LEM32_-1}  as
 \begin{equation}\label{LEM32_11}
	\begin{aligned}
		\frac {1}{2}&\frac {\ud}{\ud t}\|D^k n\|_{L^{2}}^2  + \|D^{k+1} n\|_{L^{2}}^2  
		\\
		&\leq C\| D^{k+1} n \|_{L^2}(  \| D^{k }  n  \|_{L^2} \| \nabla c  \|_{L^\infty} + \|   n  \|_{L^\infty} \| D^{k+1}c  \|_{L^2} )+ A\| D^{k+1} n \|_{L^2}  \| D^{k+1} c \|_{L^2} \\
		&\leq \Bigr{(}C(\| n  \|_{L^\infty} + \| \nabla c  \|_{L^\infty}) +\frac{A}{2}\Bigr{)}  (\| D^{k+1} n \|_{L^2}^2 + \| D^{k+1} c \|_{L^2}^2)+C\| \nabla c  \|_{L^\infty}\| D^{k} n \|_{L^2}^2.
\end{aligned}
\end{equation}
Similarly, for any integer $k\ge0$, appying Young's inequality to the $c$-equation $\eqref{eq_crt}_{2}$  yields that
\begin{equation}\label{LEM32_2}
	\begin{aligned}
		\frac {1}{2}\frac {\ud}{\ud t} \|D^k c\|_{L^{2}}^2  + \|D^k c\|_{L^{2}}^2 + \|D^{k+1} c\|_{L^{2}}^2  &\le \int_{\R^{d}} |D^k n| |D^k c| \,\ud x 
		\\&\le \frac {1}{2} (\| D^k n \|_{L^2}^2 + \| D^k c \|_{L^2}^2).
	\end{aligned}
\end{equation}
Now, using the above estimates, we show \eqref{eng_each}--\eqref{eng_nn}. 
Adding  \eqref{LEM32_0} and \eqref{LEM32_2} with $k=1$ shows  \eqref{eng_each} for $m=1$. To obtain \eqref{eng_each} for $m\ge2$, 
we add \eqref{LEM32_1} with $k=m-1$ and \eqref{LEM32_2} with $k=m$. 
If we add  \eqref{LEM32_0} and \eqref{LEM32_1} for $k=1,\cdots, m$ as well as \eqref{LEM32_2} for $k=1,\cdots, m$, then 
 \eqref{eng_n} is obtained. Adding \eqref{LEM32_0}, \eqref{LEM32_11} for $k=1,\cdots,m$, and \eqref{LEM32_2} for $k=1,\cdots,m$ gives \eqref{eng_nn}. 
\end{proof}
\subsection{Main Proofs}
Based on the preceding a priori estimates, we now prove Theorem~\ref{thm1}.
\subsubsection{Global Existence.}
We show
$T_{\rm max}=\infty$.
In the case of $j=0$,   \eqref{eng_n} with $m=s>\frac{d}{2}$ yields that for some $C>0$
\begin{equation}\label{BDDTHM1}
\sup_{t<T_{\rm max}}(\| n(\cdot,t)\|_{H^s }+\| D c(\cdot,t) \|_{H^{s-1} })\le C
\end{equation}
if $\| n \|_{L^{\infty}_{x,t}}+\| c \|_{L^{\infty}_{x,t}}$ is small.
This smallness is guaranteed by Proposition~\ref{NCSMALLLEM} under \eqref{cond1} with sufficiently small $ \delta_0$  since $\|f\|_{L^{\infty} }\lesssim \|\widehat{f}\|_{L^{1} }$ for all $f\in \mathcal{S}(\R^{d})$. Then, by  \eqref{BUCRITREF}, $T_{\rm max}=\infty$. 
Similarly, when $j=1$, we use \eqref{eng_nn} with $m=s$ to show \eqref{BDDTHM1} if $\| n \|_{L^{\infty}_{x,t}}+\|\nabla  c \|_{L^{\infty}_{x,t}}$ is  small. 
Again, this smallness is guaranteed under \eqref{cond1}, by Proposition~\ref{NCSMALLLEM}, with sufficiently small $\delta_1$. Thus, $T_{\rm max}=\infty$. Moreover, Proposition~\ref{NCSMALLLEM} gives
\[
\sup_{t>0}(\|\widehat{n}(\cdot,t)\|_{L^{1}(\R^d)} +  \||\xi|^{j}\widehat{c}(\cdot,t)\|_{L^{1}(\R^d)}) \leq C \delta_j
\]
for some $C=C(A)>0$.

\subsubsection{Smallness of $n$ and  $c$ in $L^{\infty}$.}
 
 We show   for any $\varepsilon>0$ there exists $T_{\varepsilon}<\infty$ such that
\begin{equation}\label{STEP2GOAL}
\sup_{t\ge T_{\varepsilon}}(\|n(\cdot,t)\|_{L^{\infty}}+\|c(\cdot,t)\|_{L^{\infty}})\le\varepsilon.
\end{equation}
To this end, we first obtain the temporal decay of $\|\nabla n\|_{L^{2}}$.
 Multiplying both sides of $\eqref{eq_crt}_{1}$ by $-\Delta n$ and integrating over $\R^{d}$, we have
 \[
 \frac{1}{2}\frac{d}{dt}\|D n\|_{L^{2}}^{2}+\|D^2 n\|_{L^{2}}^{2}=\int_{\R^{d}}\Delta n  (\nabla n\cdot \nabla c)\,\ud x+\int_{\R^{d}}n \Delta n  \Delta c\,\ud x+A\int_{\R^{d}}\Delta n \Delta c \,\ud x.
 \]
To estimate the first term on the right-hand side, we use integration by parts as
 \[
 \begin{aligned}
 \int_{\R^{d}}\Delta n  (\nabla n\cdot \nabla c)  \,\ud x&=\sum_{l=1}^{d}\sum_{k=1}^{d} \int_{\R^{d}} \partial_{l}^{2}n \partial_{k}n \partial_{k}c  \,\ud x
 \\&=-\sum_{l=1}^{d}\sum_{k=1}^{d} \int_{\R^{d}} \partial_{l}n \partial_{l}\partial_{k}n \partial_{k}c  \,\ud x-\sum_{l=1}^{d}\sum_{k=1}^{d} \int_{\R^{d}} \partial_{l}n \partial_{k}n \partial_{l}\partial_{k}c  \,\ud x
    \\&= \frac{1}{2}\sum_{l=1}^{d}\sum_{k=1}^{d} \int_{\R^{d}} (\partial_{l}n)^{2}  \partial_{k}^{2}c  \,\ud x-\sum_{l=1}^{d}\sum_{k=1}^{d} \int_{\R^{d}} \partial_{l}n \partial_{k}n \partial_{l}\partial_{k}c  \,\ud x.
  \end{aligned}
 \]
If we apply the H\"older inequality and the Gagliardo–Nirenberg inequality of type,
\[
\|D f\|_{L^{4}}\lesssim\|  f\|_{L^{\infty}}^\frac{1}{2}\|D^2 f\|_{L^{2}}^\frac{1}{2}\,\,\mbox{ for all }\,\,f\in (L^{\infty}\cap H^{2})(\R^{d}),
\]
then it follows that for some $C>0$
\[
 \int_{\R^{d}}\Delta n  (\nabla n\cdot \nabla c)  \,\ud x\le C\|n\|_{L^{\infty}}\|D^2 n\|_{L^{2}}\|D^2 c\|_{L^{2}}.
\]
Combining the above estimates, using Young's inequality, and adding 
 \eqref{LEM32_2} with $k=2$ shows that
\begin{equation}\label{YTINEQ}
 \frac{1}{2}\frac{d}{dt}(\|D n\|_{L^{2}}^{2}+ \| D^2 c\|_{L^{2}}^2)+ \frac{1-A}{4} (\|D^2 n\|_{L^{2}}^{2}+\|D^{3} c\|_{L^{2}}^{2}) \le0
\end{equation}
if   $\| n \|_{L^{\infty}}$ is sufficiently small. To guarantee this smallness from Proposition~\ref{NCSMALLLEM}, we pick sufficiently small  $\delta_{j} $  of \eqref{cond1}.  Since $\|n\|_{L^{2}}$ and $\|D c\|_{L^{2}}$ are uniformly bounded due to \eqref{BDDTHM1}, we can see from \eqref{YTINEQ}, after employing 
\[
\|D f\|_{L^{2}}^{2}\lesssim\|f\|_{L^{2}}\|D^{2}f\|_{L^{2}}\,\,\mbox{ for all }\,\, f\in H^{2}(\R^{d}),
\]
 that 
$
y(t):=(\|D n(\cdot,t)\|_{L^{2}}^{2}+ \| D^2 c(\cdot,t)\|_{L^{2}}^2)
$
satisfies 
$
\frac{d}{dt}y(t)+Cy^{2}(t)\le 0$ for some $C>0$.
 Solving this ordinary differential inequality (ODI) yields   there exists $C>0$ satisfying
\[
\|D n(\cdot,t)\|_{L^{2}}^{2}\le C(1+t)^{-1}\qquad\mbox{for all}\quad t>0.
\]
Thus, along with the uniform bound \eqref{BDDTHM1}, we can conclude \eqref{STEP2GOAL} since
\[
 \begin{aligned}
\|n(\cdot,t)\|_{L^{\infty}}
&\lesssim  \|n(\cdot,t)\|_{L^{2}}^{\frac{1}{2}\cdot\frac{2s-d}{2s-1}} \|D n(\cdot,t)\|_{L^{2}}^{\frac{1}{2}\cdot\frac{2s-d}{2s-1}}\|D^{s} n(\cdot,t)\|_{L^{2}}^{\frac{d-1}{2s-1}}
\lesssim  (1+t)^{-\frac{1}{4}\cdot\frac{2s-d}{2s-1}},
 \end{aligned}
\] 
 and
\[
\begin{aligned}
\|c(\cdot,t)\|_{L^{\infty}}
&\lesssim \|e^{t(\Delta-1)}c_0\|_{L^{\infty}}+\int_{0}^{\frac{t}{2}}\|e^{(t-\tau)(\Delta-1)}n(\tau)\|_{L^{\infty}}\,\ud\tau+\int_{\frac{t}{2}}^{t}\|e^{(t-\tau)(\Delta-1)}n(\tau)\|_{L^{\infty}}\,\ud\tau
\\&\lesssim  e^{-t}\|c_0\|_{L^{\infty}}+\Bigr{(}\sup_{t>0}\| n(t)\|_{L^{\infty} }\Bigr{)}\int_{0}^{\frac{t}{2}}e^{-(t-\tau)} \,\ud\tau+ \int_{\frac{t}{2}}^{t}  e^{-(t-\tau)}\|n(\tau)\|_{L^{\infty}}\,\ud\tau
\\&\lesssim  e^{-t}\|c_0\|_{H^{s}}+\Bigr{(}\sup_{t>0}\| n(t)\|_{H^{s} }\Bigr{)}e^{-\frac{t}{2}}\frac{t}{2}+ \Bigr{(}1+\frac{t}{2}\Bigr{)}^{-\frac{1}{4}\cdot\frac{2s-d}{2s-1}}\int_{\frac{t}{2}}^{t}  e^{-(t-\tau)}\,\ud\tau
\\&\lesssim  e^{-t}+ e^{-\frac{t}{2}}\frac{t}{2}+ \Bigr{(}1+\frac{t}{2}\Bigr{)}^{-\frac{1}{4}\cdot\frac{2s-d}{2s-1}}.
\end{aligned}
\]
\subsubsection{Temporal Decay Rates of $n$ and $c$.}\label{SEC323}
 Note that the solution $(n,c)$ given
  is global and smooth. In particular,  for any  $m\ge1$ and $t>1$, by the bootstrapping argument,
\begin{equation}\label{FEB282}
 (n,c)\in (L^{\infty}([1,t];H^{m}(\R^{d}))^{2}.
 \end{equation}
 We claim that for any given integer $m\ge1$, 
\begin{equation}\label{DECAYNC}
 \| D^{m} n (\cdot,t)\|_{L^2} + \| D^{m}c (\cdot,t)\|_{L^2} \lesssim  t^{-\frac{m}{2}}\,\,\mbox{ for all }\,\, t>1.
\end{equation}
 By \eqref{eng_each} and \eqref{STEP2GOAL}, there exists a sufficiently large $T$ satisfying
\[
\frac {1}{2}\frac {\ud}{\ud t} ( \|D^{m-1} n\|_{L^{2} }^2+\|D^{m} c\|_{L^{2} }^2)  + \frac{1-A}{4} (\|D^{m} n\|_{L^{2} }^2+\|D^{m+1} c\|_{L^{2} }^2)  \leq 0\qquad\mbox{for all}\quad t\ge T.
\]
Since $\|n\|_{L^{2}}$ and $\|D c\|_{L^{2}}$ are uniformly bounded by \eqref{BDDTHM1}, 
 we obtain, after using
\begin{equation}\label{SEP121}
\| D^{m-1} f \|_{L^2}\lesssim \| f \|_{L^2}^{\frac{1}{m}}\| D^{m} f \|_{L^2}^{1-\frac{1}{m}} \quad\mbox{ for all }\quad f\in H^{m}(\mathbb{R}^{d}),
\end{equation} 
 that $y(t) := ( \|D^{m-1} n(\cdot,t)\|_{L^{2}}^2+\|D^{m} c(\cdot,t)\|_{L^{2}}^2)$
satisfies   with some $C>0$,
\[
y'(t)+Cy(t)^{\frac{m}{m-1}}\le0\quad\mbox{ for all }\quad t\ge T.
\]
Solving this ODI, due to \eqref{FEB282} we can find  $C>0$ such that
\begin{equation}\label{SEP122}
  \| D^{m-1} n (\cdot,t)\|_{L^2} + \| D^{m}c (\cdot,t)\|_{L^2} \le Ct^{-\frac{ m-1}{2}}\qquad\mbox{for all}\quad t>1.
\end{equation}
The above decay rate of $n$ improves the decay rate of $c$ since for all $t>2$ and integer $l\ge1$, 
\[
\begin{aligned}
\|D^{l}c(t)\|_{L^{2}}
&\lesssim \|D^{l}e^{t(\Delta-1)}c_0\|_{L^{2}}+\int_{0}^{\frac{t}{2}}\|D^{l}e^{(t-\tau)(\Delta-1)}n(\tau)\|_{L^{2}}\,\ud\tau+\int_{\frac{t}{2}}^{t}\|D^{l}e^{(t-\tau)(\Delta-1)}n(\tau)\|_{L^{2}}\,\ud\tau
\\&\lesssim  t^{-\frac{l}{2}}e^{-t}\|c_0\|_{L^{2}}+\Bigr{(}\sup_{t>0}\| n(t)\|_{L^{2} }\Bigr{)}\int_{0}^{\frac{t}{2}}(t-\tau)^{-\frac{l}{2}}e^{-(t-\tau)} \,\ud\tau+ \int_{\frac{t}{2}}^{t}  e^{-(t-\tau)}\|D^{l }n(\tau)\|_{L^{2}}\,\ud\tau
\\&\lesssim  t^{-\frac{l}{2}}e^{-t}\|c_0\|_{L^{2}}+\int_{0}^{\frac{t}{2}}e^{-(t-\tau)} \,\ud\tau\Bigr{(}\frac{t}{2}\Bigr{)}^{-\frac{l}{2}}+ \int_{\frac{t}{2}}^{t}  e^{-(t-\tau)}\,\ud\tau\Bigr{(}\frac{t}{2}\Bigr{)}^{-\frac{l}{2}} 
\\&\lesssim  t^{-\frac{l}{2}}.
\end{aligned}
\]
Hence, \eqref{DECAYNC} is obtained.
\subsubsection{Temporal Decay Rates of $n-c$.}

Moreover, using 
$\rd_t(n-c) -\Delta(n-c) + (n-c) = -\nabla \cdot ( n  \nabla c)-A\Delta c$,  
we can derive decay rates for $n-c$ that are faster than those for $n$ or $c$, \eqref{DECAYNC}. Indeed, for all $t>2$ and integer $m\ge1$ we have    
 \[
 \begin{aligned}
\|&D^{m}(n-c)(t)\|_{L^{2}}
\\&\lesssim \|D^{m}e^{t(\Delta-1)} (n_{0}-c_{0})\|_{L^{2}}+\int_{0}^{\frac{t}{2}} \|D^{m}e^{(t-\tau)(\Delta -1)} (\nabla \cdot (n\nabla c)+A\Delta c)(\tau)\|_{L^{2}} \,\ud \tau 
\\&\quad+\int_{\frac{t}{2}}^{t} \|D^{m}e^{(t-\tau)(\Delta -1)} (\nabla \cdot (n\nabla c)+A\Delta c)(\tau)\|_{L^{2}} \,\ud \tau
\\&\lesssim  t^{-\frac{m}{2}}e^{-t}\|n_{0}-c_{0}\|_{L^{2}}
\\&\quad + \int_{0}^{\frac{t}{2}}  e^{-(t-\tau)} (t-\tau)^{-\frac{m+1}{2}}( \| n(\tau)\|_{L^{\infty}}\|D c (\tau)\|_{L^{2}}+A\|D c(\tau)\|_{L^{2}})d\tau
\\&\quad +\int_{\frac{t}{2}}^{t}e^{-(t-\tau)}( \|  D^{m+1}  n(\tau)\|_{L^{2}}\|D c(\tau)\|_{L^{\infty}}+   \| n(\tau)\|_{L^{\infty}}\|D^{m+2}  c(\tau)\|_{L^{2}}  +A\|D^{m+2}c (\tau)\|_{L^{2}})\,\ud \tau,
 \end{aligned}
 \]
 where we used Lemma~\ref{KATOTYPELEM} with $(\alpha,\beta)=(m+1,0)$ in the last line.
Using \eqref{BDDTHM1}, \eqref{DECAYNC} and
$
\|  f \|_{L^{\infty}}\lesssim \| f \|_{L^2}^{\frac{1}{2}}\| D^{d} f \|_{L^2}^{\frac{1}{2}}$ for all   $f\in H^{d}(\mathbb{R}^{d})$,
 we can further compute the second and third terms on the right hand side as 
  \[
 \begin{aligned}
 \int_{0}^{\frac{t}{2}}  &e^{-(t-\tau)} (t-\tau)^{-\frac{m+1}{2}}( \| n(\tau)\|_{L^{\infty}}\|D c (\tau)\|_{L^{2}}+A\|D c(\tau)\|_{L^{2}})d\tau
\\&\lesssim    \Bigr{(}\sup_{t>0}( \| n(t)\|_{H^{s}}\| Dc(t)\|_{L^{2}}+A  \| Dc(t)\|_{L^{2}}) \Bigr{)}\int_{0}^{\frac{t}{2}}  e^{-(t-\tau)} (t-\tau)^{-\frac{m+1}{2}} d\tau 
\\&\lesssim (1+A)\Bigr{(}\frac{t}{2} \Bigr{)}^{-\frac{m+1}{2}+1}e^{-\frac{t}{2}},
 \end{aligned}
\]
\[
	\begin{aligned}
\int_{\frac{t}{2}}^{t}&e^{-(t-\tau)}( \|  D^{m+1}  n(\tau)\|_{L^{2}}\|D c(\tau)\|_{L^{\infty}}+   \| n(\tau)\|_{L^{\infty}}\|D^{m+2}  c(\tau)\|_{L^{2}}  +A\|D^{m+2}c (\tau)\|_{L^{2}})\,\ud \tau,
\\&\lesssim \int_{\frac{t}{2}}^{t}e^{-(t-\tau)}( \|  D^{m+1}  n(\tau)\|_{L^{2}}\|D c(\tau)\|_{L^{2}}^{ \frac{1}{2}}\|D^{d+1} c(\tau)\|_{L^{2}}^{\frac{1}{2}}
\\&\qquad+   \| n(\tau)\|_{L^{2}}^{ \frac{1}{2}} \| D^{d}n(\tau)\|_{L^{2}}^{\frac{1}{2}}\|D^{m+2}  c(\tau)\|_{L^{2}}  +A\|D^{m+2}c (\tau)\|_{L^{2}}) \,\ud \tau 
\\&\lesssim \bket{\Bigr{(} \frac{t}{2}\Bigr{)}^{-\frac{m}{2}-\frac{d}{4}-1}+A\Bigr{(} \frac{t}{2}\Bigr{)}^{-\frac{m}{2}-1}}\int_{\frac{t}{2}}^{t} e^{-(t-\tau)}d\tau
\\&\lesssim t^{-\frac{m}{2}-\frac{d}{4}-1}+At^{-\frac{m}{2}-1}.
	\end{aligned}
\]
Combining the above estimates and \eqref{FEB282} gives
\[
\|D^{m}(n-c)(t)\|_{L^{2}}
\lesssim t^{-\frac{m}{2}-\frac{d}{4}-1}+At^{-\frac{m}{2}-1}\quad\mbox{ for all }t>1.
\]
\qed
\section{Critical Case: Proof of Theorem~\ref{thm_crt}}\label{SEC12}
In this section, we assume the hypotheses of Theorem~\ref{thm_crt}. Let $(n,c)$ be the solution given by Lemma~\ref{LEMLWP}. 
\subsection{A Priori Estimates}
We first prepare the $L^{2}$-type a priori estimates.
\begin{lemma}\label{L2ESTLEM}
Let the same assumptions as in Theorem~\ref{thm_crt} be satisfied. For any integer $m\ge1$, there exists $C>0$ such that
\begin{equation}\label{A1HM}
	\begin{aligned}
		\frac {1}{2}\frac {\ud}{\ud t} ( \| n\|_{H^{m}(\R^{d})}^2+&\| \nabla c\|_{H^{m}(\R^{d})}^2  ) + \frac{1}{8}\|D^{2} n\|_{H^{m-1}(\R^{d})}^2  +\frac{1}{8}    \|D^{2} c\|_{H^{m}(\R^{d})}^2\\   &\leq  C(\| n\|_{L^{\infty}(\R^{d})}^{2}+\| \nabla c\|_{L^{\infty}(\R^{d})}^{2})( \| n\|_{H^{m}(\R^{d})}^2+\| \nabla c\|_{H^{m}(\R^{d})}^2  ).
	\end{aligned}
\end{equation}
\end{lemma}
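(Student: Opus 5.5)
We prove \eqref{A1HM} by the energy method of the previous section, now with $A=1$. At $A=1$ the cross terms can no longer be absorbed by Young's inequality with a gain, as in \eqref{LEM32_0}--\eqref{LEM32_2}. The remedy is to pair each $n$-level with the \emph{gradient} of $c$ at the same level, so that the linear cross terms combine into a perfect square. Concretely, for each integer $0\le k\le m$ we compute
\[
\frac12\frac{\ud}{\ud t}\|D^k n\|_{L^2}^2+\|D^{k+1}n\|_{L^2}^2=\int_{\R^d}D^{k+1}n\,D^{k+1}c\,\ud x+\int_{\R^d} D^{k}\nabla n\cdot D^{k}(n\nabla c)\,\ud x .
\]
Applying $D^{k+1}$ to $\eqref{eq_crt}_2$ and testing with $D^{k+1}c$ gives
\[
\frac12\frac{\ud}{\ud t}\|D^{k+1}c\|_{L^2}^2+\|D^{k+1}c\|_{L^2}^2+\|D^{k+2}c\|_{L^2}^2=\int_{\R^d}D^{k+1}n\,D^{k+1}c\,\ud x .
\]
Adding the two identities, the linear terms give the exact dissipation $\|D^{k+1}(n-c)\|_{L^2}^2+\|D^{k+2}c\|_{L^2}^2$. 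This is the key identity behind criticality: nothing is lost, but nothing is gained in $\|D^{k+1}n\|_{L^2}$ itself. Summing over $k=0,\dots,m$ produces $\frac12\frac{\ud}{\ud t}(\|n\|_{H^m}^2+\|\nabla c\|_{H^m}^2)$ on the left, together with the dissipation
\[
\mathcal D:=\sum_{k=0}^m\bigl(\|D^{k+1}(n-c)\|_{L^2}^2+\|D^{k+2}c\|_{L^2}^2\bigr).
\]

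Next we recover $n$-dissipation from $\mathcal D$. For $1\le k\le m$ we use
\[
\|D^{k+1}n\|_{L^2}^2\le 2\|D^{k+1}(n-c)\|_{L^2}^2+2\|D^{k+1}c\|_{L^2}^2,
\]
and note that $\|D^{k+1}c\|_{L^2}$ with $k\ge1$ is one of the terms $\|D^{j+2}c\|_{L^2}$ already contained in $\mathcal D$. Hence
\[
\mathcal D\ge \tfrac14\|D^2n\|_{H^{m-1}}^2+\tfrac14\|D^2c\|_{H^m}^2 ,
\]
where the $\|D^2 c\|_{L^2}^2$ part comes from the $k=0$ term. This leaves room to absorb the nonlinear terms into half of it and end with the constants $\frac18$. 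We do not expect, and do not need, control of $\|Dn\|_{L^2}$, which is why the statement only has $D^2 n$.

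Finally we treat the nonlinear terms. For $k=0$ we integrate by parts:
\[
\int_{\R^d} n\nabla n\cdot\nabla c\,\ud x=-\frac12\int_{\R^d} n^2\Delta c\,\ud x\le \|n\|_{L^\infty}\|n\|_{L^2}\|D^2c\|_{L^2},
\]
and Young's inequality bounds this by $\varepsilon\|D^2c\|_{L^2}^2+C\|n\|_{L^\infty}^2\|n\|_{L^2}^2$. For $1\le k\le m$ we bound the term by $\|D^{k+1}n\|_{L^2}\|D^k(n\nabla c)\|_{L^2}$ and apply Lemma~\ref{KATOTYPELEM} with $(\alpha,\beta)=(k,0)$:
\[
\|D^k(n\nabla c)\|_{L^2}\lesssim \|D^kn\|_{L^2}\|\nabla c\|_{L^\infty}+\|n\|_{L^\infty}\|D^{k}\nabla c\|_{L^2}.
\]
Young's inequality then gives $\varepsilon\|D^{k+1}n\|_{L^2}^2+C(\|n\|_{L^\infty}^2+\|\nabla c\|_{L^\infty}^2)(\|n\|_{H^m}^2+\|\nabla c\|_{H^m}^2)$. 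Since $k+1\ge2$, the term $\|D^{k+1}n\|_{L^2}^2$ lies in $\|D^2n\|_{H^{m-1}}^2$ and is absorbed. The main obstacle is exactly this bookkeeping: $\|Dn\|_{L^2}$ never appears in the dissipation, so the $k=0$ term must be handled by the integration by parts above rather than by the Kato–Ponce route. Choosing $\varepsilon$ small yields \eqref{A1HM}.
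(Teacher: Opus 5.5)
Your proof is correct and is essentially the paper's argument. It uses the same pairing of $D^k n$ with $D^{k+1}c$, the same borrowing of the preceding level's $\|D^{k+1}c\|_{L^2}^2$ dissipation to recover $\|D^{k+1}n\|_{L^2}^2$ for $k\ge1$, the integration by parts to $-\frac12\int n^2\Delta c$ at $k=0$, and Lemma~\ref{KATOTYPELEM} with $(\alpha,\beta)=(k,0)$ for $k\ge1$. The only difference is bookkeeping: you keep the linear terms as the exact square $\|D^{k+1}(n-c)\|_{L^2}^2$ and then split it, whereas the paper applies weighted Young's inequality to the cross terms and telescopes the resulting $-\frac12\|D^{k+1}c\|_{L^2}^2$ deficits against the $c$-dissipation of the previous level.
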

\begin{proof}
From the equation, after using Young's inequality and integration by parts, we have 
\[
	\begin{aligned}
		\frac {1}{2}\frac {\ud}{\ud t} \int_{\R^{d}} |n|^2 \,\ud x + \int_{\R^{d}} |\nabla n|^2 \,\ud x  &= \int_{\R^{d}}  \nabla n \cdot \nabla c \,\ud x+ \int_{\R^{d}} n \nabla c \cdot \nabla n \,\ud x\\
		&\le \frac {1}{2} (\| \nabla  n \|_{L^2}^2 +\| \nabla  c \|_{L^2}^2)-  \frac{1}{2}\int_{\R^{d}}   n^{2}  \Delta c \,\ud x,
			\end{aligned}
\]
and
\[
	\begin{aligned}
		\frac {1}{2}\frac {\ud}{\ud t} \int_{\R^{d}} |\nabla  c|^2 \,\ud x + \int_{\R^{d}} (|\nabla  c|^2 + |D^{2} c|^2) \,\ud x &= \int_{\R^{d}} \nabla n \cdot \nabla  c \,\ud x \\
	 	&\leq \frac {1}{2} (\| \nabla  n \|_{L^2}^2 +   \| \nabla  c \|_{L^2}^2).
	\end{aligned}
\]
Thus,  we have
\begin{equation}\label{O0}
\frac {1}{2}\frac {\ud}{\ud t} (\|n\|_{L^{2}}^2+\|\nabla c\|_{L^{2}}^2)+\|D^{2} c\|_{L^{2}}^2 \le  \frac{1}{2}\int_{\R^{d}}   n^{2} | \Delta c|\,\ud x.
\end{equation}
Similarly, for any integer $k>0$ we can see that
\begin{equation*}
	\begin{aligned}
		\frac {1}{2}\frac {\ud}{\ud t} \int_{\R^{d}} |D^k n|^2 \,\ud x + \int_{\R^{d}} |D^{k+1} n|^2 \,\ud x &\le \int_{\R^{d}}       | D^{k+1} n |  | D^{k+1} c | \,\ud x+ \int_{\R^{d}} |D^{k} (n \nabla c)||  D^{k} \nabla n |\,\ud x\\
		&\le   \frac {1}{4} \| D^{k+1} n \|_{L^2}^2 + \| D^{k+1} c \|_{L^2}^2+\| D^{k}  (n \nabla c) \|_{L^2} \| D^{k+1} n \|_{L^2},
	\end{aligned}
\end{equation*}
\begin{equation*}
	\begin{aligned}
		\frac {1}{2}\frac {\ud}{\ud t} \int_{\R^{d}} |D^{k+1} c|^2 \,\ud x + \int_{\R^{d}} (|D^{k+1} c|^2 + |D^{k+2} c|^2) \,\ud x &\le  \int_{\R^{d}} | D^{k+1} n||D^{k+1} c| \,\ud x\\ & \leq \frac {1}{2} (\| D^{k+1} n \|_{L^2}^2 + \| D^{k+1} c \|_{L^2}^2),
	\end{aligned}
\end{equation*}
and thus,
\begin{equation}\label{Ok}
	\begin{aligned}
		\frac {1}{2}\frac {\ud}{\ud t}( \|D^k n\|_{L^{2}}^2+\|D^{k+1} c\|_{L^{2}}^2) +   \frac{1}{4}\|D^{k+1} n\|_{L^{2}}^2  &-\frac{1}{2}    \|D^{k+1} c\|_{L^{2}}^2+ \|D^{k+2} c\|_{L^{2}}^2   \\ &\qquad \leq   \| D^{k}  (n \nabla c) \|_{L^2} \| D^{k+1} n \|_{L^2}.
	\end{aligned}
\end{equation}
If we add \eqref{O0} and \eqref{Ok} with $k=1$, then
\[
	\begin{aligned}
		\frac {1}{2}\frac {\ud}{\ud t} ( \| n\|_{H^{1}}^2+\| \nabla c\|_{H^{1}}^2  )  &+\frac{1}{4}\|D^{2} n\|_{L^{2}}^2 +\frac{1}{2}    \|D^{2} c\|_{L^{2}}^2+ \|D^{3} c\|_{L^{2}}^2\\ &\qquad \leq  \frac{1}{2}\int_{\R^{d}}   n^{2} | \Delta c|\,\ud x+   \| D  (n \nabla c) \|_{L^2} \| D^{2} n \|_{L^2}.
	\end{aligned}
\]
Adding \eqref{Ok} for $k=2,\cdots,m$ to the above estimate gives
\[
	\begin{aligned}
		\frac {1}{2}\frac {\ud}{\ud t} ( \| n\|_{H^{m}}^2+\| \nabla c\|_{H^{m}}^2  ) &+\frac{1}{4}\|D^{2} n\|_{H^{m-1}}^2  +\frac{1}{2}    \|D^{2} c\|_{H^{m-1}}^2+ \|D^{m+2} c\|_{L^{2}}^2\\ &\qquad \leq \frac{1}{2}\int_{\R^{d}}   n^{2} | \Delta c|\,\ud x+  \sum_{k=1}^{m} \| D^{k}  (n \nabla c) \|_{L^2} \| D^{k+1} n \|_{L^2}.
	\end{aligned}
\]
Using H\"older's and Young's inequalities, we estimate the first term on the right-hand side as
\[
\frac{1}{2}\int_{\R^{d}}   n^{2} | \Delta c| \,\ud x\le \frac{1}{2}\|n\|_{L^{2}}^{2}\|n\|_{L^{\infty}}^{2}+\frac{1}{8}\|D^{2}c\|_{L^{2}}^{2}.
\]
Using Lemma~\ref{KATOTYPELEM} with $(\alpha,\beta)=(k,0)$, we compute the rightmost term as
\[
	\begin{aligned}
\sum_{k=1}^{m} \| D^{k}  (n \nabla c) \|_{L^2} \| D^{k+1} n \|_{L^2} &\lesssim \sum_{k=1}^{m} ( \| D^{k} n \|_{L^2} \| \nabla c \|_{L^\infty}+\|   n \|_{L^\infty}\| D^{k+1} c \|_{L^2}        )\| D^{k+1} n \|_{L^2}\\
&\lesssim (\| n\|_{H^{m}}\| \nabla c\|_{L^{\infty}} +  \| n\|_{L^{\infty}}\| \nabla c\|_{H^{m}}) \| D^{2} n \|_{H^{m-1}}.
	\end{aligned}
\]
Combining the above estimates, after using Young's inequality, we have \eqref{L2ESTLEM}.
\end{proof}
The proof of Theorem~\ref{thm_crt} relies on the following two lemmas, whose proofs are deferred to Appendices~\ref{secA3} and~\ref{secA4}.
Note that 
\[
\| (1+t)^{\frac{d}{4}}\hat{n} (t) \|_{L^{1}_{\xi}L^{\infty}_{t}}:=\|(1+t)^{\frac{d}{4}} \hat{n}(\xi,t) \|_{L^{\infty}_{t}(0,T;L^{1}_{\xi}(\R^{d}))},\quad \| \hat{n} \|_{L^{\infty}_{\xi,t}}:=\| \hat{n}(\xi,t) \|_{L^{\infty}_{t}(0,T;L^{\infty}_{\xi}(\R^{d}))},\] 
\[
\|(1+t)^{\frac{d+1}{4}} |\xi|\hat{c} (t)\|_{L^{1}_{\xi}L^{\infty}_{t}}:=\| (1+t)^{\frac{d+1}{4}}|\xi|\hat{c}(\xi,t) \|_{L^{\infty}_{t}(0,T;L^{1}_{\xi}(\R^{d}))}
\]
 are well-defined for all $T<T_{\rm max}$  due to the regularity properties of $(n,c)$ in Lemma~\ref{LEMLWP}. 
\begin{lemma}\label{LEM42}
Let the same assumptions as in Theorem~\ref{thm_crt} be satisfied.
There exists $C>0$ such that for all $T<T_{\rm max}$
\[
\begin{aligned}
&\|(1+t)^{\frac{d}{4}} \hat{n}(t)\|_{L^{1}_{\xi}L^{\infty}_{t}}
\\&
\le C (\norm{    \widehat{n_0}  }_{L^{1}\cap L^{\infty}}  + \norm{|\xi|\widehat{c_0} }_{L^{1}\cap L^{\infty}})+C(\norm{\hat{n} }_{L^{\infty}_{\xi,t}}+\|(1+t)^{\frac{d}{4}} \hat{n}(t)\|_{L^{1}_{\xi}L^{\infty}_{t}})\|(1+t)^{\frac{d+1}{4}}|\xi| \hat{c}(t)\|_{L^{1}_{\xi}L^{\infty}_{t}},
\end{aligned}
\]
\[
\begin{aligned}
\norm{ \hat{n}(t)}_{L^{\infty}_{\xi,t}}
\le C(\norm{    \widehat{n_0}  }_{ L^{\infty}}  + \norm{|\xi|\widehat{c_0} }_{ L^{\infty}})+ C\norm{\hat{n} }_{L^{\infty}_{\xi,t}}\|(1+t)^{\frac{d+1}{4}}|\xi| \hat{c}(t)\|_{L^{1}_{\xi}L^{\infty}_{t}}.
\end{aligned}
\]
\end{lemma}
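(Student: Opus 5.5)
We work entirely from the spectral Duhamel formula \eqref{PREdf_u} with $A=1$, so that $\lambda_{\pm}=\frac{1+2|\xi|^2\pm\sqrt{1+4|\xi|^2}}{2}$. The $n$-component of $\widehat u=\sum_{l}\langle \widehat u,a_l\rangle b_l$ is
\[
\widehat n(\xi,t)=\sum_{l=\pm}e^{-\lambda_l t}\langle \widehat{u_0},a_l\rangle (b_l)_1-\int_0^t\sum_{l=\pm}e^{-\lambda_l(t-\tau)}\langle i\xi\cdot\widehat{(n\nabla c)}e_1,a_l\rangle(b_l)_1\,\ud\tau .
\]

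\textbf{Symbols.} First I record pointwise bounds on these symbols. Since $\lambda_+\ge \frac12$, the $+$ mode decays exponentially. For $\lambda_-$ we have $\lambda_-=\frac{2|\xi|^4}{1+2|\xi|^2+\sqrt{1+4|\xi|^2}}\gtrsim \min(|\xi|^4,|\xi|^2)$, so $e^{-\lambda_- t}\le e^{-c|\xi|^4 t}$ for $|\xi|\le1$. This quartic low-frequency dissipation is why the rates are halved. For the coefficients, $(b_\pm)_1=\mp|\xi|^2/\sqrt{1+4|\xi|^2}$ and $(a_\pm)_1=-2/(1\pm\sqrt{1+4|\xi|^2})$. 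Near $\xi=0$ one has $(a_-)_1\approx 1/|\xi|^2$, so $(a_-)_1(b_-)_1=O(1)$ and the products behave well. Consequently $|\langle \widehat{u_0},a_-\rangle(b_-)_1|\lesssim |\widehat{n_0}|+|\xi|^2|\widehat{c_0}|\cdot O(1/|\xi|^2)\cdot|\xi|^2$, which is bounded by $|\widehat{n_0}|+|\xi||\widehat{c_0}|$ (bounded for small $|\xi|$). The nonlinear coefficient is $|(a_l)_1(b_l)_1|\lesssim 1$, so the forcing contributes at most $\lesssim|\xi|\,|\widehat{n\nabla c}|$ per mode.

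\textbf{Linear part.} For the $+$ mode I bound by $e^{-t/2}(|\widehat{n_0}|+|\xi||\widehat{c_0}|)$, integrable in $\xi$ using the $L^1$ norms. For the $-$ mode I split $|\xi|\le1$ and $|\xi|>1$. At high frequencies, $e^{-ct}$ combined with the $L^1$ norms suffices. At low frequencies I use the $L^\infty$ norms and
\[
\int_{|\xi|\le1}e^{-c|\xi|^4t}\,\ud\xi\lesssim(1+t)^{-d/4}.
\]
This gives the $(1+t)^{d/4}$ weight in $L^1_\xi$ and the trivial $L^\infty_\xi$ bound.

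\textbf{Nonlinear part.} Since $\widehat{n\nabla c}=(2\pi)^{-d}\,\widehat n*(i\eta\widehat c)$, Young's inequality gives two bounds:
\[
\|\widehat{n\nabla c}(\tau)\|_{L^\infty_\xi}\le\|\widehat n\|_{L^\infty}\||\xi|\widehat c\|_{L^1}\lesssim \|\widehat n\|_{L^\infty_{\xi,t}}M(1+\tau)^{-\frac{d+1}{4}},
\]
\[
\|\widehat{n\nabla c}(\tau)\|_{L^1_\xi}\le \|\widehat n\|_{L^1}\||\xi|\widehat c\|_{L^1}\lesssim N M(1+\tau)^{-\frac{2d+1}{4}},
\]
where $M$ and $N$ denote the weighted norms of $|\xi|\widehat c$ and $\widehat n$.

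For the $L^\infty_\xi$ estimate, I bound $\int_0^t e^{-c\min(|\xi|^4,|\xi|^2)(t-\tau)}|\xi|(1+\tau)^{-\frac{d+1}{4}}\,\ud\tau$ uniformly in $\xi$. With $d\ge2$ we have $\frac{d+1}{4}\ge\frac34$, and the factor $|\xi|$ controls the low modes. Splitting at $\tau=t/2$ and computing $|\xi|\min(t,|\xi|^{-4})^{1/4\cdot}$-type bounds gives $O(1)$. For high $|\xi|$, $\int_0^t|\xi|e^{-|\xi|^2(t-\tau)}\,\ud\tau\le|\xi|^{-1}$.

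For the weighted $L^1_\xi$ estimate, I split time into $[0,t/2]$ and $[t/2,t]$.
\begin{itemize}
\item On $[0,t/2]$, I use the $L^\infty_\xi$ bound on $\widehat{n\nabla c}$ together with $\int_{|\xi|\le1}|\xi|e^{-c|\xi|^4 t/2}\,\ud\xi\lesssim t^{-\frac{d+1}{4}}$. The time integral $\int_0^{t/2}(1+\tau)^{-\frac{d+1}{4}}\,\ud\tau$ may grow like $t^{1-\frac{d+1}{4}}$, but the total is at most $t^{-\frac{d+1}{4}}\,t^{\max(0,\,1-\frac{d+1}{4})}\log$, which is $\lesssim t^{-d/4}$ since $\frac{d+1}{4}-1+\frac{d+1}{4}\ge\frac d4$ for $d\ge2$.
\item On $[t/2,t]$, I use the $L^1_\xi$ bound on $\widehat{n\nabla c}$, whose decay $(1+t)^{-\frac{2d+1}{4}}$ times $\sup_\xi\int_{t/2}^t|\xi|e^{-\lambda_-(t-\tau)}\,\ud\tau$ is the issue.
\end{itemize}

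\textbf{Main obstacle.} The hardest step is this last sup: at low frequency it behaves like $|\xi|\min(t,|\xi|^{-4})\lesssim t^{3/4}$. This loses $t^{3/4}$ against the gain $t^{-\frac{d+1}{4}}$, which is acceptable since $\frac{2d+1}{4}-\frac34\ge\frac d4$ exactly when $d\ge2$. If this step is too tight, I would instead interpolate, using $L^\infty_\xi$ at small $|\xi|$ and $L^1_\xi$ elsewhere. Collecting both time pieces yields the two inequalities stated in Lemma~\ref{LEM42}.
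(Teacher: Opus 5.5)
Your proposal is correct and follows essentially the same route as the paper. It uses the same pointwise symbol bounds and $e^{-\lambda_-t}\le e^{-c|\xi|^4t}$ at low frequency, together with an $L^1$/$L^\infty$ split of the linear part. For the first inequality it splits time at $t/2$: on $[0,t/2]$ it pairs $\|\hat n\|_{L^\infty}\||\xi|\hat c\|_{L^1}$ with the $L^1_\xi$ norm of the kernel, and on $[t/2,t]$ it pairs $\|\hat n\|_{L^1}\||\xi|\hat c\|_{L^1}$ with the kernel's $L^\infty_\xi$ norm, where the $t^{3/4}$ loss is absorbed exactly because $d\ge2$. The second inequality uses the $L^\infty_\xi\times L^1_\xi$ pairing.

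Two small points to tidy up:
\begin{enumerate}
\item On $[t/2,t]$ the supremum in $\xi$ must sit inside the time integral, since you only control $\sup_\tau\|\widehat{n\nabla c}(\tau)\|_{L^1_\xi}$. Use $\int_{t/2}^{t}\sup_\xi|\xi|e^{-\lambda_-(t-\tau)}\,\mathrm{d}\tau\lesssim\int_0^{t/2}(s^{-1/4}+s^{-1/2})\,\mathrm{d}s\lesssim t^{3/4}$ for $t\ge1$, which gives the same bound you computed.
\item For $t\lesssim1$, the $L^1_\xi$ norm of the kernel on $[0,t/2]$ blows up at high frequency. There you should use the $L^1\times L^1$ pairing on all of $[0,t]$, as the paper does.
\end{enumerate}
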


\begin{lemma}\label{LEM43}
Let the same assumptions as in Theorem~\ref{thm_crt} be satisfied.
There exists $C>0$ such that for all $T<T_{\rm max}$
\[
\begin{aligned}
&\|(1+t)^{\frac{d+1}{4}} |\xi|\hat{c}(t)\|_{L^{1}_{\xi}L^{\infty}_{t}}
\\&
\le C \bke{\norm{    \widehat{n_0}  }_{L^{1}\cap L^{\infty}}  + \norm{|\xi|\widehat{c_0} }_{L^{1}\cap L^{\infty}}}+C\bke{\norm{\hat{n} }_{L^{\infty}_{\xi,t}}+\|(1+t)^{\frac{d}{4}} \hat{n}(t)\|_{L^{1}_{\xi}L^{\infty}_{t}}}\|(1+t)^{\frac{d+1}{4}}|\xi| \hat{c}(t)\|_{L^{1}_{\xi}L^{\infty}_{t}}.
\end{aligned}
\]
\end{lemma}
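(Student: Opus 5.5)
We estimate $|\xi|\hat c$ from the Duhamel formula \eqref{PREdf_u} with $A=1$, using the eigen-decomposition $\hat u=\sum_{l=\pm}\langle \hat u,a_l\rangle b_l$. The second component of $b_\pm$ is $\pm\frac{1\pm\sqrt{1+4|\xi|^2}}{2\sqrt{1+4|\xi|^2}}$. We also have $\langle w,a_\pm\rangle=\frac{-2w_1}{1\pm\sqrt{1+4|\xi|^2}}+w_2$.

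\textbf{The $-$ branch.} Here $1-\sqrt{1+4|\xi|^2}\approx-2|\xi|^2$ for small $\xi$. The factor $\frac{-2}{1-\sqrt{\cdot}}$ in $a_-$ is of size $|\xi|^{-2}$, while the $c$-component of $b_-$ is of size $|\xi|^2$, so these cancel. The net coefficient in front of $\hat n$ data is $O(1)$, and in front of $\hat c$ data it is $O(|\xi|^2)$.

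\textbf{The $+$ branch.} This carries the exponential factor $e^{-\lambda_+ t}$ with $\lambda_+\ge \frac12+|\xi|^2$.

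\textbf{Decay of the critical mode.} At $A=1$,
\[
\lambda_-=\frac{1+2|\xi|^2-\sqrt{1+4|\xi|^2}}{2}\ge c\min(|\xi|^4,|\xi|^2).
\]
So for $|\xi|\le1$ we get $e^{-\lambda_- t}\le e^{-c|\xi|^4 t}$, and for $|\xi|\ge1$ we get $e^{-\lambda_-t}\le e^{-c|\xi|^2t}$. This $|\xi|^4$ behaviour produces the halved heat rates.

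\textbf{Linear terms.} Their contribution to $|\xi|\hat c$ is bounded by
\[
|\xi|e^{-c|\xi|^4t}\bigl(|\widehat{n_0}|+|\xi\widehat{c_0}|\bigr)
\]
for low frequencies, plus exponentially decaying pieces elsewhere. Split the $L^1_\xi$ integral into $|\xi|\le1$ and $|\xi|\ge1$. On $|\xi|\le 1$, use $L^\infty_\xi$ of the data: $\int|\xi|e^{-c|\xi|^4t}\,\ud\xi\lesssim(1+t)^{-\frac{d+1}{4}}$. On $|\xi|\ge1$, use $e^{-ct}$ together with the $L^1$ norms. This gives the data term.

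\textbf{Nonlinear term.} The forcing is $\xi\cdot\widehat{n\nabla c}$, with $|\widehat{n\nabla c}|\le\|\hat n\|_{L^1_\xi}\,\||\xi|\hat c\|_{L^\infty_\xi}$ or similar convolution bounds. Each factor of $\xi$ gives an extra $|\xi|$, so the low-frequency kernel becomes $|\xi|^2e^{-c|\xi|^4(t-\tau)}$. I would use two convolution bounds:
\[
\|\widehat{n\nabla c}(\tau)\|_{L^\infty_\xi}\le\|\hat n\|_{L^\infty_\xi}\,\||\xi|\hat c(\tau)\|_{L^1_\xi}\lesssim\|\hat n\|_{L^\infty_{\xi,t}}\,M_c(1+\tau)^{-\frac{d+1}{4}},
\]
\[
\|\widehat{n\nabla c}(\tau)\|_{L^1_\xi}\le\|\hat n\|_{L^1_\xi}\,\||\xi|\hat c\|_{L^1_\xi}\lesssim M_nM_c(1+\tau)^{-\frac{2d+1}{4}},
\]
where $M_n$ and $M_c$ denote the two weighted norms.

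Split $\int_0^t$ into $[0,t/2]$ and $[t/2,t]$.
On $[0,t/2]$, use the $L^\infty_\xi$ bound together with
\[
\int|\xi|^2e^{-c|\xi|^4(t-\tau)}\,\ud\xi\lesssim(t-\tau)^{-\frac{d+2}{4}}.
\]
This gives $t^{-\frac{d+2}{4}}\int_0^{t/2}(1+\tau)^{-\frac{d+1}{4}}\,\ud\tau$, which is at most $t^{-\frac{d+1}{4}}$ when $d\ge2$. When $\frac{d+1}{4}<1$ the integral grows like $t^{1-\frac{d+1}{4}}$, so the total is $t^{-\frac{2d-1}{4}}$, which is at most $t^{-\frac{d+1}{4}}$ exactly when $d\ge2$.
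On $[t/2,t]$, use the $L^1_\xi$ bound together with $\sup_\xi|\xi|^2e^{-c|\xi|^4s}\lesssim s^{-1/2}$. This gives $(1+t)^{-\frac{2d+1}{4}}\int_{t/2}^t(t-\tau)^{-1/2}\,\ud\tau\lesssim t^{\frac12-\frac{2d+1}{4}}$, which is at most $t^{-\frac{d+1}{4}}$ iff $d\ge2$.
High frequencies are handled with the heat kernel $e^{-c|\xi|^2(t-\tau)}$, using $|\xi|^2e^{-c|\xi|^2 s}\lesssim s^{-1}$, which is integrable against the $e^{-c|\xi|^2 s}$ tail. More carefully, put one $|\xi|$ into the $\hat c$ factor via $\xi\cdot\widehat{n\nabla c}$ and use $|\xi|e^{-c|\xi|^2 s}\lesssim s^{-1/2}$. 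The $+$ branch is treated the same way with the exponential factor $e^{-(t-\tau)/2}$.

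\textbf{Main obstacle.} I expect the hardest step to be the careful bookkeeping of the eigenvector coefficient cancellation (the $|\xi|^{-2}$ versus $|\xi|^2$ factors in the $-$ branch) uniformly in $\xi$. The $[t/2,t]$ time piece is also delicate: there the $d\ge2$ restriction is sharp, and one must check that the constant is uniform as $t\to0$ through the $(1+t)$ weights.
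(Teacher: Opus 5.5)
Your overall plan matches the paper's proof. You use Duhamel in the eigenbasis, $e^{-\lambda_- s}\le e^{-c|\xi|^4 s}$ for $|\xi|\le 1$ and $e^{-c|\xi|^2 s}$ for $|\xi|\ge 1$, and the linear part from the $L^1\cap L^\infty$ data. For the nonlinear part you pair $\|\hat n\|_{L^\infty_\xi}\||\xi|\hat c\|_{L^1_\xi}$ with the $L^1_\xi$ norm of the kernel on $[0,t/2]$, and $\|\hat n\|_{L^1_\xi}\||\xi|\hat c\|_{L^1_\xi}$ with its $\sup_\xi$ on $[t/2,t]$. The condition $d\ge 2$ enters exactly where you say. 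Your explicit cut at $|\xi|=1$ handles small $t$ a little more cleanly than the paper's global bound $e^{-\lambda_\pm s}\lesssim\sum_{k=1,2}e^{-|\xi|^{2k}s/3}$.

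The step that fails as written is the high-frequency nonlinear estimate. If the multiplier there were $|\xi|^2e^{-c|\xi|^2(t-\tau)}$, neither pairing would be integrable at $\tau=t$. Its $\sup_\xi$ is $\sim (t-\tau)^{-1}$, which is your first suggestion. Its $L^1(\{|\xi|\ge1\})$ norm is $\sim (t-\tau)^{-\frac{d+2}{2}}$.

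Your repair, moving one $|\xi|$ onto the $\hat c$ factor of $\hat n*(i\xi\hat c)$, would need $\||\xi|^2\hat c\|_{L^1_\xi}$. Splitting $|\xi|\le|\xi-\eta|+|\eta|$ instead would need $\||\xi|\hat n\|_{L^1_\xi}$. Neither quantity is controlled by the right-hand side of the lemma, so the estimate would not close.

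The missing derivative is actually supplied by the eigenprojection onto the $c$-component. For both branches and all $\xi$,
\[
\langle e_1,a_\pm\rangle\langle b_\pm,e_2\rangle=\frac{-2}{1\pm\sqrt{1+4|\xi|^2}}\cdot\frac{\pm\bigl(1\pm\sqrt{1+4|\xi|^2}\bigr)}{2\sqrt{1+4|\xi|^2}}=\frac{\mp 1}{\sqrt{1+4|\xi|^2}}.
\]
Hence the nonlinear part of $|\xi|\hat c$ is bounded by $\int_0^t e^{-\lambda_\pm(t-\tau)}\frac{|\xi|^2}{\sqrt{1+4|\xi|^2}}|\widehat{n\nabla c}|\,\ud\tau$, with $\frac{|\xi|^2}{\sqrt{1+4|\xi|^2}}\le\min\{|\xi|^2,\frac{|\xi|}{2}\}$.

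This is the uniform-in-$\xi$ coefficient bookkeeping you flagged as the main obstacle, but you carried it out only at low frequency, where it gives $|\xi|^2$. At high frequency it gives $|\xi|/2$. Then $\sup_{|\xi|\ge 1}|\xi|e^{-c|\xi|^2 s}\lesssim s^{-1/2}e^{-cs/2}$, paired with the $L^1_\xi$ bound $(1+\tau)^{-\frac{2d+1}{4}}$, closes on all of $[0,t]$. The same works for the $+$ branch using $\lambda_+\ge 1+|\xi|^2$. This is what the paper does.
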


\subsection{Main Proofs}
We are in position to prove Theorem~\ref{thm_crt}.
\subsubsection{Global Existence and Long Time Behaviors.}
Assume that condition \eqref{cond2}, given by 
\[\norm{\widehat{n_0}}_{L^{1}\cap L^{\infty}} + \norm{|\xi|\widehat{c_0}}_{L^{1}\cap L^{\infty}}\le \delta,
\] holds, where $\delta>0$ will be specified later.
We claim that $T_{\rm max}=\infty$ and there exists $C>0$ independent of $\delta$ such that
\begin{equation}\label{GOAL42MAIN}
\norm{    \widehat{n}   }_{L^{\infty}(0,\infty;L^{1}\cap L^{\infty}(\R^{d}))}  + \norm{|\xi|\widehat{c} }_{L^{\infty}(0,\infty;L^{1}\cap L^{\infty}(\R^{d}))} \le C\delta.
\end{equation}

  First, we show the continuity of 
\[
Y(T):=
\sup_{t\le T}\Bigr{(}\norm{\hat{n}(\cdot,t)}_{L^{\infty} }+\|(1+t)^{\frac{d}{4}} \hat{n}(\cdot,t)\|_{L^{1} }+\|(1+t)^{\frac{d+1}{4}}|\xi| \hat{c}(\cdot,t)\|_{L^{1} }\Bigr{)}
\]
in $[0,T_{\rm max})$. 
The continuity of $\|(1+t)^{\frac{d}{4}} \hat{n}(\cdot,t)\|_{L^{1} }$ and $\|(1+t)^{\frac{d+1}{4}}|\xi| \hat{c}(\cdot,t)\|_{L^{1} }$ are guaranteed by the regularity properties of $(n,c)$ in Lemma~\ref{LEMLWP}.  It remains to show the continuity of $\norm{\hat{n}(\cdot,t)}_{L^{\infty} }$. Note that for $0\le t<T<T_{\rm max}$
\begin{equation}\label{DIFFNHAT}
\begin{aligned}
\|\hat{n}(\cdot,T) - \hat{n}(\cdot,t)\|_{L^{\infty} }
& \le \sum_{j=+,-}\|(e^{-\lambda_{j}T} -e^{-\lambda_{j}t})  \brk{ \widehat{u_0},a_{j} } \brk{ b_{j}, e_{1}}\|_{L^{\infty}}
\\&\,\, + \sum_{j=+,-}\norm{(e^{-\lambda_{j} t }-e^{-\lambda_{j} T })\int_0^t e^{\lambda_{j} \tau } \brk{ i\xi \cdot\widehat{  (n \nabla c)}e_1, a_{j} } \brk{ b_{j},e_{1}} \,\ud \tau}_{L^{\infty}}
\\&\,\, + \sum_{j=+,-}\norm{e^{-\lambda_{j} T } \int_t^T e^{ \lambda_{j} \tau } \brk{ i\xi \cdot\widehat{  (n \nabla c)}e_1, a_{j} } \brk{ b_{j},e_{1}} \,\ud \tau}_{L^{\infty}},
\end{aligned}
\end{equation}
where
\[
 \brk{ \widehat{u_0},a_{\pm} } \brk{ b_{\pm}, e_{1}}= \bke{ \widehat{n_0}\frac{-2}{1\pm\sqrt{1+4|\xi|^{2}}} +\widehat{c_0} }\bke{\frac{\mp|\xi|^{2}}{\sqrt{1+4|\xi|^{2}}} }, 
\]
\[
\brk{ i\xi \cdot\widehat{  (n \nabla c)}e_1, a_{\pm} } \brk{ b_{\pm},e_{1}} =  \bke{ i\xi \cdot\widehat{  (n \nabla c)}\frac{-2}{1\pm\sqrt{1+4|\xi|^{2}}}  }\bke{\frac{\mp|\xi|^{2}}{\sqrt{1+4|\xi|^{2}}} }.
\]
Note also that a direct computation gives 
\[
\lambda_{\pm}\ge0,
\qquad
|\brk{ \widehat{u_0},a_{\pm} } \brk{ b_{\pm}, e_{1}} |\le |\widehat{n_{0}}|+|\xi||\widehat{c_{0}}|,
\]
  Minkowski's integral inequality shows for $0\le t_{1}<t_{2}<T_{\rm max}$
\[
\|\|\brk{ i\xi \cdot\widehat{  (n \nabla c)}e_1, a_{\pm} } \brk{ b_{\pm},e_{1}} \|_{L^{\infty}(t_{1},t_{2})} \|_{L^{\infty}_{\xi}}
\le\|\|\brk{ i\xi \cdot\widehat{  (n \nabla c)}e_1, a_{\pm} } \brk{ b_{\pm},e_{1}} \|_{L^{\infty}_{\xi}}\|_{L^{\infty}(t_{1},t_{2})},
\]
and  Young's convolution inequality and the Plancherel theorem yields
\[
\begin{aligned}
\|\brk{ i\xi \cdot\widehat{  (n \nabla c)}e_1, a_{\pm} } \brk{ b_{\pm},e_{1}} \|_{L^{\infty}}
&\lesssim  \|i\xi \cdot\widehat{  (n \nabla c)}\|_{L^{\infty}}
\\& \lesssim  \|(|\xi| |\hat{n}|)  * (|\xi| |\hat{c}|) \|_{L^{\infty}}+ \| |\hat{n}|   * (|\xi|^{2} |\hat{c}|) \|_{L^{\infty}}
\\& \lesssim \| |\xi|  \hat{n} \|_{L^{2}}  \| |\xi|  \hat{c} \|_{L^{2}} + \|   \hat{n} \|_{L^{2}}  \| |\xi|^{2}  \hat{c} \|_{L^{2}}
\\& \lesssim \|  n  \|_{H^{s}}  \|  c  \|_{H^{s+1}}.
\end{aligned}
\]
We compute the first term on the right-hand side of \eqref{DIFFNHAT}  as 
\[
\begin{aligned}
\sum_{j=+,-}&\|(e^{-\lambda_{j}T} -e^{-\lambda_{j}t})  \brk{ \widehat{u_0},a_{j} } \brk{ b_{j}, e_{1}}\|_{L^{\infty}} 
\\&\quad\le \sum_{j=+,-}\| e^{-\lambda_{j}t}\|_{L^{\infty}}\|(1-e^{-\lambda_{j}(T-t)} ) \brk{ \widehat{u_0},a_{j} } \brk{ b_{j}, e_{1}}\|_{L^{\infty}}
\\& \quad\le   \sum_{j=+,-}\| (1-e^{-\lambda_{j}(T-t)} ) (|\widehat{n_{0}}|+|\xi||\widehat{c_{0}}|)\|_{L^{\infty}}.
\end{aligned}
\]
Using the Riemann-Lebesgue lemma and $n_0,\nabla c_0\in L^{1}(\R^{d})$, we observe that  for any  $\varepsilon>0$, there exists $R>0$ such that
\[
\|(|\widehat{n_{0}}|+|\xi||\widehat{c_{0}}|)\|_{L^{\infty}(\R^{d}\setminus B(0;R))}\le \varepsilon.
\]
With such $R>0$, we can compute 
\[
\begin{aligned}
\|& (1-e^{-\lambda_{\pm}(T-t)} ) (|\widehat{n_{0}}|+|\xi||\widehat{c_{0}}|)\|_{L^{\infty}}
\\&\quad\le \| (1-e^{-\lambda_{\pm}(T-t)} ) (|\widehat{n_{0}}|+|\xi||\widehat{c_{0}}|)\|_{L^{\infty}(\R^{d}\setminus B(0;R))}+\| (1-e^{-\lambda_{\pm}(T-t)}  ) (|\widehat{n_{0}}|+|\xi||\widehat{c_{0}}|)\|_{L^{\infty}(B(0;R))}
\\&\quad\le \|   (|\widehat{n_{0}}|+|\xi||\widehat{c_{0}}|)\|_{L^{\infty}(\R^{d}\setminus B(0;R))} + \|(|\widehat{n_{0}}|+|\xi||\widehat{c_{0}}|)\|_{L^{\infty}(B(0;R))}\| (1-e^{-\lambda_{\pm}(T-t)}  )  \|_{L^{\infty}(B(0;R))}
\\&\quad\le \varepsilon  + \|(|\widehat{n_{0}}|+|\xi||\widehat{c_{0}}|)\|_{L^{\infty} }\| (1-e^{-\lambda_{\pm}(T-t)}  )  \|_{L^{\infty}(B(0;R))}.
\end{aligned}
 \]
Since $\| (1-e^{-\lambda_{\pm}(T-t)}  )  \|_{L^{\infty}(B(0;R))}$ tends to $0$ as $T\rightarrow t$ and $\varepsilon>0$ is arbitrary,
  the first term on the right-hand side of  \eqref{DIFFNHAT} also  approaches $0$ as $T\rightarrow t$. 
Similarly, we can show the second term on the right-hand side of  \eqref{DIFFNHAT} tends to $0$ as $T\rightarrow t$. Indeed, 
\[
\begin{aligned}
\sum_{j=+,-}&\norm{(e^{-\lambda_{j} t }-e^{-\lambda_{j} T })\int_0^t e^{\lambda_{j} \tau } \brk{ i\xi \cdot\widehat{  (n \nabla c)}e_1, a_{j} } \brk{ b_{j},e_{1}} \,\ud \tau}_{L^{\infty}}
\\ &\quad\lesssim \|  n  \|_{L^{\infty}(0,t;H^{s})}  \|  c  \|_{L^{\infty}(0,t;H^{s+1})} \sum_{j=+,-}\norm{(e^{-\lambda_{j} t }-e^{-\lambda_{j} T })\int_{0}^{t}e^{\lambda_{j}\tau}d\tau}_{L^{\infty}}
 \end{aligned}
 \]
 and observe that for any $\varepsilon>0$, there exists $R>0$ such that 
 \[
 \norm{ 1/\lambda_{\pm}}_{L^{\infty}(\R^{d}\setminus B(0;R))}\le \varepsilon.
 \]
With such $R>0$, we have
\[
\begin{aligned}
&\norm{(e^{-\lambda_{\pm} t }-e^{-\lambda_{\pm} T })\int_{0}^{t}e^{\lambda_{\pm}\tau}d\tau}_{L^{\infty}}
\\&\quad\le \norm{(e^{-\lambda_{\pm} t }-e^{-\lambda_{\pm} T })\int_{0}^{t}e^{\lambda_{\pm}\tau}d\tau}_{L^{\infty}(\R^{d}\setminus B(0;R))}+\norm{(e^{-\lambda_{\pm} t }-e^{-\lambda_{\pm} T })\int_{0}^{t}e^{\lambda_{\pm}\tau}d\tau}_{L^{\infty}( B(0;R))}
\\&\quad\le \norm{(e^{-\lambda_{\pm} t }-e^{-\lambda_{\pm} T })\frac{1}{\lambda_{\pm}}(e^{\lambda_{\pm}t}-1)}_{L^{\infty}(\R^{d}\setminus B(0;R))}+\|(e^{-\lambda_{\pm} t }-e^{-\lambda_{\pm} T })te^{\lambda_{\pm}t}\|_{L^{\infty}( B(0;R))}
\\&\quad\le \norm{1/\lambda_{\pm}}_{L^{\infty}(\R^{d}\setminus B(0;R))}+t\|(1-e^{-\lambda_{\pm} (T-t) })\|_{L^{\infty}( B(0;R))}
\\&\quad\le \varepsilon +t\|(1-e^{-\lambda_{\pm} (T-t) })\|_{L^{\infty}( B(0;R))},
\end{aligned}
\]
which entails that the second term on the right-hand side of  \eqref{DIFFNHAT}  approaches $0$ as $T\rightarrow t$. The rightmost term on the right-hand side of \eqref{DIFFNHAT} also approaches $0$ as $T\rightarrow t$ because 
\[
\begin{aligned}
\sum_{j=+,-}&\norm{e^{-\lambda_{j} T } \int_t^T e^{ \lambda_{j} \tau } \brk{ i\xi \cdot\widehat{  (n \nabla c)}e_1, a_{j} } \brk{ b_{j},e_{1}} \,\ud \tau}_{L^{\infty}}
\\& \lesssim \sum_{j=+,-}\|  n  \|_{L^{\infty}(0,T;H^{s})}  \|  c  \|_{L^{\infty}(0,T;H^{s+1})} \norm{e^{-\lambda_{j} T } \int_t^T e^{ \lambda_{j} \tau }   \,\ud \tau}_{L^{\infty}}
\\& \lesssim \sum_{j=+,-}\|  n  \|_{L^{\infty}(0,T;H^{s})}  \|  c  \|_{L^{\infty}(0,T;H^{s+1})}| T-t|.
\end{aligned}
\]
Therefore, $Y$ is continuous in $[0,T_{\rm max})$.  

Note from Lemma~\ref{LEM42} and Lemma~\ref{LEM43} that  
\[
Y(T)\lesssim \norm{    \widehat{n_0}  }_{L^{1}\cap L^{\infty}}  + \norm{|\xi|\widehat{c_0} }_{L^{1}\cap L^{\infty}}+Y^{2}(T) .
\]
Note also that $
Y(0)\le 
\norm{    \widehat{n_0}  }_{L^{1}\cap L^{\infty}}  + \norm{|\xi|\widehat{c_0} }_{L^{1}\cap L^{\infty}}$. 
Thus, choosing $\delta>0$ sufficiently small in \eqref{cond2} yields  
\begin{equation}\label{YBOUND}\sup_{T<T_{\rm max}}Y(T)\le C\delta
\end{equation}
for some $C>0$ independent of $\delta$.
In particular,  
$
\|f\|_{L^{\infty} }\lesssim \|\widehat{f}\|_{L^{1} }$,   $\|f\|_{L^{2} }^{2}=\|\widehat{f}\|_{L^{2} }^{2}\lesssim \|\widehat{f}\|_{L^{1} }\|\widehat{f}\|_{L^{\infty} }$ for all $f\in \mathcal{S}(\R^{d})$
 shows that
\begin{equation}\label{SEP4}
\|n(\cdot,t)\|_{L^{2}}^{2}+\|n(\cdot,t)\|_{L^{\infty}}\lesssim (1+t)^{-\frac{d}{4}},\qquad \|\nabla c(\cdot,t)\|_{L^{\infty}}\lesssim  (1+t)^{-\frac{d+1}{4}}\,\,\mbox{ for all }\,\, t<T_{\rm max}.
\end{equation}
Using 
\eqref{A1HM} with $m=s>\frac{d}{2}$, Gr\"onwall's inequality, and \eqref{SEP4}, we have $T_{\rm max}=\infty$ by   \eqref{BUCRITREF}. 

Next, we show \eqref{GOAL42MAIN}. As in the Section~\ref{SEC323},
  due to  \eqref{SEP4}, we have   
\begin{equation}\label{FEB19}
\begin{aligned}
\|c(\cdot,t)\|_{L^{\infty}}
&\lesssim \|e^{t(\Delta-1)}c_0\|_{L^{\infty}}+\int_{0}^{\frac{t}{2}}\|e^{(t-\tau)(\Delta-1)}n(\tau)\|_{L^{\infty}}\,\ud\tau+\int_{\frac{t}{2}}^{t}\|e^{(t-\tau)(\Delta-1)}n(\tau)\|_{L^{\infty}}\,\ud\tau
\\&\lesssim  e^{-t}\|c_0\|_{L^{\infty}}+\Bigr{(}\sup_{t>0}\| n(t)\|_{L^{\infty} }\Bigr{)}\int_{0}^{\frac{t}{2}}e^{-(t-\tau)} \,\ud\tau+ \int_{\frac{t}{2}}^{t}  e^{-(t-\tau)}\|n(\tau)\|_{L^{\infty}}\,\ud\tau
\\&\lesssim  e^{-t}\|c_0\|_{H^{s}}+e^{-\frac{t}{2}}\frac{t}{2}+ \Bigr{(}1+\frac{t}{2}\Bigr{)}^{-\frac{d}{4} }\int_{\frac{t}{2}}^{t}  e^{-(t-\tau)}\,\ud\tau
\\&\lesssim (1+t )^{-\frac{d}{4} },
\end{aligned}
\end{equation}
\begin{equation}\label{327PM}
\begin{aligned}
\|c(\cdot,t)\|_{L^{2}}
&\lesssim \|e^{t(\Delta-1)}c_0\|_{L^{2}}+\int_{0}^{\frac{t}{2}}\|e^{(t-\tau)(\Delta-1)}n(\tau)\|_{L^{2}}\,\ud\tau+\int_{\frac{t}{2}}^{t}\|e^{(t-\tau)(\Delta-1)}n(\tau)\|_{L^{2}}\,\ud\tau
\\&\lesssim  e^{-t}\|c_0\|_{L^{2}}+\Bigr{(}\sup_{t>0}\| n(t)\|_{L^{2} }\Bigr{)}\int_{0}^{\frac{t}{2}}e^{-(t-\tau)} \,\ud\tau+ \int_{\frac{t}{2}}^{t}  e^{-(t-\tau)}\|n(\tau)\|_{L^{2}}\,\ud\tau
\\&\lesssim  e^{-t}\|c_0\|_{L^{2}}+e^{-\frac{t}{2}}\frac{t}{2}+ \Bigr{(}1+\frac{t}{2}\Bigr{)}^{-\frac{d}{8} }\int_{\frac{t}{2}}^{t}  e^{-(t-\tau)}\,\ud\tau
\\&\lesssim (1+t )^{-\frac{d}{8} }.
\end{aligned}
\end{equation}
On the other hand, by  $\eqref{FTROFEQ}_{2}$ and \eqref{YBOUND},
we have for $j=0,1$ that
\[
\||\xi|^{j}\widehat{c}(t)\|_{L^{\infty}} \lesssim   \||\xi|^{j}\widehat{c_0} \|_{L^{\infty}}+ \int_0^t (t-\tau)^{-\frac{j}{2}}e^{-(t-\tau)} \|\widehat{ n}(\tau)\|_{L^{\infty}} \,\ud \tau\,\,\mbox{ for all }\,\, t>0.
\]
Thus, along with \eqref{YBOUND}, we can 
conclude \eqref{GOAL42MAIN} and
\begin{equation}\label{328PM}
\sup_{t>0}(\|\widehat{n}(\cdot,t)\|_{L^{\infty}}+\|\widehat{c}(\cdot,t)\|_{L^{\infty}})\le C.
\end{equation}

\subsubsection{Temporal Decay Rates of $n$ and $c$.}  
Note that the solution $(n,c)$ 
is global and smooth. In particular, by a bootstrapping argument, for any $m\ge1$ and $t>1$, 
\begin{equation}\label{FEB28249}
(n,c)\in (L^{\infty}([1,t];H^{m}(\R^{d})))^{2}.
\end{equation}
We claim that for any integer $m\ge0$,
\begin{equation}\label{WEBFEB}
\| D^m n(\cdot,t)\|_{L^{2}}+\| D^m c(\cdot,t)\|_{L^{2}}   \lesssim (1+t)^{-\frac{1}{2}(\frac{m}{2} + \frac{d}{4})}\quad \mbox{ for all }\,\,t> 1.
\end{equation}
Since the case $m=0$ follows from \eqref{SEP4} and \eqref{327PM}, it suffices to consider $m\ge1$.
As in the proof of Lemma~\ref{L2ESTLEM}, we have 
\[
		\frac {1}{2}\frac {\ud}{\ud t} \int_{\R^{d}} |D^m n|^2 \,\ud x + \int_{\R^{d}} |D^{m+1} n|^2 \,\ud x \le   \frac {1}{2} \| D^{m+1} n \|_{L^2}^2 + \frac{1}{2}\| D^{m+1} c \|_{L^2}^2+ \| D^{m}  (n \nabla c) \|_{L^2}\| D^{m+1} n \|_{L^2},
\]
\[		\frac {1}{2}\frac {\ud}{\ud t} \int_{\R^{d}} |D^{m+1} c|^2 \,\ud x + \int_{\R^{d}} (|D^{m+1} c|^2 + |D^{m+2} c|^2) \,\ud x \leq \frac {1}{2} (\| D^{m+1} n \|_{L^2}^2 + \| D^{m+1} c \|_{L^2}^2),
\]
and thus, we can see that
 \begin{equation*}
		\begin{aligned}
			&\frac {\ud}{\ud t} \left( \int_{\R^{d}} |D^m n|^2  +   |D^{m+1} n|^2 +   |D^{m+1} c|^2 \,\ud x \right) + \int_{\R^{d}} |D^{m+2} n|^2  +  |D^{m+2} c|^2 \,\ud x \\
			&\qquad\qquad\qquad\qquad\qquad\leq \| D^{m}  (n \nabla c) \|_{L^2}\| D^{m+1} n \|_{L^2} + \| D^{m+1}  (n \nabla c) \|_{L^2}\| D^{m+2} n \|_{L^2}.
		\end{aligned}
	\end{equation*} 
If we further estimate	 the first term on the right-hand side
using Lemma~\ref{KATO1} with $(\alpha,\beta)=(m,1)$, and interpolation and Young's inequalities,  then for some $C>0$ 
	 \begin{equation*}
		\begin{aligned}
			&\| D^{m}  (n \nabla c) \|_{L^2}\| D^{m+1} n \|_{L^2} \\
			&\lesssim ( \| D^{m+1} n \|_{L^2} \| c \|_{L^\infty} + \| D^{m+1} c \|_{L^2} \| n \|_{L^\infty})\| D^{m+1} n \|_{L^2} \\
			&\lesssim  \| D^{m+2} n \|_{L^2}^{\frac{2(m+1)}{m+2}} \| n \|_{L^2}^{\frac{2}{m+2}} \| c \|_{L^\infty} +  \| D^{m+2} c \|_{L^2}^{\frac{m+1}{m+2}} \| c \|_{L^2}^{\frac{1}{m+2}} \| n \|_{L^\infty} \| D^{m+2} n \|_{L^2}^{\frac{m+1}{m+2}} \| n \|_{L^2}^{\frac{1}{m+2}} \\
			&\leq \frac{1}{8} \| D^{m+2} n \|_{L^2}^2 + \frac{1}{8} \| D^{m+2} c \|_{L^2}^2 + C(\| n \|_{L^2}^2 \| c \|_{L^{\infty}}^{m+2} +   \| n \|_{L^2} \| c \|_{L^2} \| n \|_{L^{\infty}}^{m+2}).
		\end{aligned}
	\end{equation*} 
 Similarly, using Lemma~\ref{KATO1} with $(\alpha,\beta)=(m+1,1)$, 
\eqref{SEP4}--\eqref{FEB19} and Young's inequality,
we can compute the rightmost term to find a sufficiently large $T$ satisfying
\begin{equation*}
		\begin{aligned}
			 \| D^{m+1}  (n \nabla c) \|_{L^2}\| D^{m+2} n \|_{L^2} 
			& \lesssim ( \| D^{m+2} n \|_{L^2} \| c \|_{L^\infty} + \| D^{m+2} c \|_{L^2} \| n \|_{L^\infty})\| D^{m+2} n \|_{L^2}
			\\ &\le \frac{1}{8} (\| D^{m+2} n \|_{L^2}^2 +\| D^{m+2} c \|_{L^2}^2)\,\,\mbox{ for all }\,\, t\ge T.
		\end{aligned}
	\end{equation*}
Combining the above estimates gives, due to \eqref{SEP4}--\eqref{327PM}, that   \begin{equation*}
		\begin{aligned}
			 \frac {\ud}{\ud t}  ( \|D^m n\|_{L^{2}}^{2} + \|D^{m+1} n\|_{L^{2}}^{2}  +   \|D^{m+1}& c\|_{L^{2}}^{2} ) +\frac{1}{2} ( \|D^{m+2} n\|_{L^{2}}^{2}+  \|D^{m+2} c\|_{L^{2}}^{2} ) 
			 \\&\lesssim (\| n \|_{L^2}^2 \| c \|_{L^{\infty}}^{m+2} +   \| n \|_{L^2} \| c \|_{L^2} \| n \|_{L^{\infty}}^{m+2})
			 \\&\lesssim (1+t)^{-(\frac{m}{2} + \frac{d}{4})-1} \,\,\mbox{ for all }\,\,t\ge T.
		\end{aligned}
	\end{equation*} 
It follows from  \eqref{328PM} and the interpolation inequality,	  
\[
  \| |\xi|^{m+1}  \widehat{f} \|_{L^2}\!\!\lesssim\| \widehat{f} \|_{L^\infty}^{\omega}\| |\xi|^{m+2} \widehat{f} \|_{L^2}^{1-\omega}, \quad  \| |\xi|^{m}  \widehat{f} \|_{L^2} \!\!\lesssim \| \widehat{f} \|_{L^\infty}^{2\omega} \| |\xi|^{m+2} \widehat{f} \|_{L^2}^{1-2\omega}\,\,\mbox{ for all }f\in\mathcal{S}(\R^{d}),
\]
where $\omega:=(m+2+\frac{d}{2})^{-1}\in(0,\frac{1}{2})$, 
 that 
$y(t) :=  \|D^m n(\cdot,t)\|_{L^{2}}^{2} + \|D^{m+1} n(\cdot,t)\|_{L^{2}}^{2}  +   \|D^{m+1} c(\cdot,t)\|_{L^{2}}^{2} $ satisfies, with some $C>0$,
\[
y'+\frac{1}{C}\biggr{(} \|D^m n\|_{L^{2}}^{2\frac{1}{1-2\omega}}\!\!\!\!+\|D^{m+1} n\|_{L^{2}}^{2\frac{1}{1-\omega}}  \!\!+\|D^{m+1} c\|_{L^{2}}^{2\frac{1}{1-\omega}} \biggr{)}\!\!\le C (1+t)^{-(\frac{m}{2} + \frac{d}{4})-1}\,\,\mbox{ for all }\,\,t\ge T.
\]	
This gives $ y(t)\lesssim 1$ uniformly in $t\in [T,\infty)$
and thus, with some $C>0$,
\[y'+ \frac{1}{C} y^{\frac{1}{1-2\omega}} \leq C(1+t)^{-(\frac{m}{2} + \frac{d}{4})-1}\,\,\mbox{ for all }\,\,t\ge T.
\]
Equivalently,  $z(t):= (1+t)^{\frac{m}{2}+\frac{d}{4}}y(t)$ satisfies
\[
z'\le (1+t)^{-1}\bkt{ \left(\frac{m}{2}+\frac{d}{4}\right)z +C   -\frac{1}{C}z^{\frac{1}{1-2\omega}}}\,\,\mbox{ for all }\,\,t\ge T
\]
and thus, $ z(t)\lesssim 1$ uniformly in $t\in [T,\infty)$. In particular, with \eqref{FEB28249} we have
\[
\| D^m n(\cdot,t)\|_{L^{2}}   \lesssim (1+t)^{-\frac{1}{2}(\frac{m}{2} + \frac{d}{4})}\,\,\mbox{ for all }\,\,t> 1.
\]
 This entails, due to \eqref{SEP4}, that  for all $t>2$
\[
\begin{aligned}
\|&D^{m} c(\cdot,t)\|_{L^{2}} 
\\&\lesssim  \|  D^m e^{t(\Delta-1)} c_0\|_{L^{2}}  + \int_0^{\frac{t}{2}}\| D^{m}  e^{-(1-\Delta)(t-\tau)} n(\tau)\|_{L^{2}} \,\mathrm{d}\tau+\int_{\frac{t}{2}}^{t} \| D^{m}  e^{-(1-\Delta)(t-\tau)} n(\tau)\|_{L^{2}} \,\mathrm{d}\tau
\\&\lesssim   t^{-\frac{m}{2}}   e^{-t} \|c_0\|_{L^{2}}  + \int_0^{\frac{t}{2}}(t-\tau)^{-\frac{m}{2}}e^{-(t-\tau)}\| n(\tau)\|_{L^{2}} \,\mathrm{d}\tau+\int_{\frac{t}{2}}^{t}  e^{-(t-\tau)}\|    D^{m}n(\tau)\|_{L^{2}} \,\mathrm{d}\tau
\\&\lesssim   t^{-\frac{m}{2}}   e^{-t} \|c_0\|_{L^{2}}  +\Bigr{(} \frac{t}{2}\Bigr{)}^{1-\frac{m}{2}}e^{-\frac{t}{2}}  +\Bigr{(}1+\frac{t}{2}\Bigr{)}^{-\frac{1}{2}(\frac{m}{2} + \frac{d}{4})}\int_{\frac{t}{2}}^{t}  e^{-(t-\tau)} \,\mathrm{d}\tau
\\&\lesssim   (1+t)^{-\frac{1}{2}(\frac{m}{2} + \frac{d}{4})}.
\end{aligned}
\] 
Thus, together with \eqref{FEB28249}, we obtain \eqref{WEBFEB}.

\subsubsection{Temporal Decay Rates of $n-c$.} 
Similarly, for any integer $m\ge0$ and for all $t>2$, using \eqref{SEP4}--\eqref{327PM}, \eqref{WEBFEB}, and 
 Lemma~\ref{KATO1} with $(\alpha,\beta)=(m+1,1)$ we can compute
\[
\begin{aligned}
&\|D^m (n-c)(\cdot,t)\|_{L^{2}}
\\&\lesssim \|D^{m}e^{t(\Delta-1)}(n_0-c_0)\|_{L^{2}}+\int_{0}^{t}\|D^{m} e^{(t-\tau)(\Delta-1)}(\nabla \cdot(n\nabla c)+A\Delta c)(\tau)\|_{L^{2}}\,\ud\tau
\\&\lesssim t^{-\frac{m}{2}}e^{-t}\|n_0-c_0\|_{L^{2}}+\int_{0}^{\frac{t}{2}}e^{-(t-\tau)}\bkt{(t-\tau)^{-\frac{m+1}{2}}\|  n(\tau)\|_{L^{2}}\|\nabla c(\tau)\|_{L^{\infty}}+(t-\tau)^{-\frac{m+2}{2}}\|c(\tau)\|_{L^{2}}}\,\ud\tau
\\&\quad+\int_{\frac{t}{2}}^{t}  e^{-(t-\tau)}\bkt{\|D^{m+1} (n\nabla c)(\tau)\|_{L^{2}}+\|D^{m+2} c(\tau)\|_{L^{2}}}\,\ud\tau
\\&\lesssim t^{-\frac{m}{2}}e^{-t}+e^{-\frac{t}{2}}\int_{0}^{\frac{t}{2}} (t-\tau)^{-\frac{m+1}{2}} +(t-\tau)^{-\frac{m+2}{2}}\,\ud\tau
\\&\quad+\int_{\frac{t}{2}}^{t}  e^{-(t-\tau)}\bkt{\|D^{m+2}  n(\tau)\|_{L^{2}} \| c(\tau)\|_{L^{\infty}}+\|n(\tau)\|_{L^{\infty}} \| D^{m+2}  c(\tau)\|_{L^{2}}+\| D^{m+2}  c(\tau)\|_{L^{2}}}\,\ud\tau
\\&\lesssim t^{-\frac{m}{2}}e^{-t}+e^{-\frac{t}{2}}\Bigr{(} \frac{t}{2}\Bigr{)}^{1-\frac{m+1}{2}}+e^{-\frac{t}{2}}\Bigr{(} \frac{t}{2}\Bigr{)}^{1-\frac{m+2}{2}}
+\Bigr{(} 1+\frac{t}{2}\Bigr{)}^{-\frac{1}{2}(\frac{m+2}{2} + \frac{d}{4})}\int_{\frac{t}{2}}^{t}  e^{-(t-\tau)}\,\ud\tau
\\&\lesssim (1+ t )^{-\frac{1}{2}(\frac{m+2}{2} + \frac{d}{4})}.
\end{aligned}
\]
This shows 
\[
\| D^m (n-c)(\cdot,t)\|_{L^{2}}  \lesssim  (1+ t )^{-\frac{1}{2}(\frac{m+2}{2} + \frac{d}{4})}\quad \mbox{ for all }t> 1.
\]
\qed
\section{Supercritical Case: Proof of Theorem~\ref{thm3}}\label{SEC13}
 The proof relies on a specific choice of parameters and initial conditions. We outline these preliminary definitions below.

Let $A>1$, $d\ge2$, and let  $ s>\frac{d}{2}$ be an integer. We often suppressed $|\xi|$-dependence of $\lambda_{\pm}$ unless any confusion is to be expected. Note that 
\begin{equation}\label{LAMML}
 \begin{aligned}
 &\frac{d}{d\sigma}\lambda_{-}(\sigma)<0\mbox{ for } \sigma< \sqrt{\frac{A^{2}-1}{4A}}\quad\mbox{ and } \quad\frac{d}{d\sigma}\lambda_{-}(\sigma)\ge0 \mbox{ otherwise},
   \\&\lambda_{-}\ge   \lambda_{-}\bigr{|}_{|\xi|= \sqrt{\frac{A^{2}-1}{4A}}}= -\frac{ (A-1)^2 }{4A  },\,\,\mbox{ and }\,\, \lambda_{+}\ge1.
    \end{aligned}
 \end{equation}
 Denote by positive numbers
\begin{equation}\label{DEFTMU}
\begin{aligned}
\lambda_{0}&:= \lambda_{-}(\sqrt{2(A-1)}),\qquad
C_{0} :=  2\sqrt{9A^{2}-5} \bke{1+\frac{9(A^{2}-1)}{16A}}^{\frac{s}{2}}\max\bket{1,\frac{2}{\sqrt{A-A^{-1}}}}, 
\\
C_{1}&:= \int_{0}^{\infty}    \frac{e^{-\frac{1}{2}\lambda_{0}\sigma}}{\sqrt{\sigma}} \,\ud\sigma,
\qquad
C_{2}:=\sup_{\sigma>\sqrt{2(A-1)}}\frac{\sigma}{\sqrt{\lambda_{-}(\sigma)}},
\\
\eta_{0}&:= \frac{1}{6}\biggr{(} \frac{(2A-1)^{\frac{s}{2}}\sqrt{2(A-1)}  C_{d,s}}{\frac{(A-1)^{2}}{4A}\frac{2s-d}{2s}}3^{\frac{4s-d}{2s}}C_{0}^{\frac{2s-d}{2s}}(2\sqrt{2}C_{0})^{\frac{d}{2s}}+9\frac{C_{2}}{\sqrt{e}}C_{1}\tilde{C}_{d,s} C_{0}\biggr{)}^{-1}.
\end{aligned}
\end{equation}
 Here, $C_{d,s}\ge1$ and $\tilde{C}_{d,s}\ge1$  are numbers satisfying
\begin{equation}\label{FGHS}
\| D  f\|_{L^{\infty}}\le  C_{d,s}\|D   f\|_{L^{2}}^{\frac{2s-d}{2s}}\|   f\|_{H^{s+1}}^{\frac{d}{2s}}\quad\mbox{ for all }\,\, f\in H^{s+1}(\R^{d}),
\end{equation}
\begin{equation}\label{FGHS1}
\|f\nabla g\|_{H^{s}}\le \tilde{C}_{d,s}\|f\|_{H^{s}} \|g\|_{H^{s+1}}\quad\mbox{ for all }\,\, f\in H^{s}(\R^{d}), \,\,g\in H^{s+1}(\R^{d}).
\end{equation}
Let $\eta\in(0,\eta_0)$. Define $T_{\eta}:= \frac{4A}{(A-1)^{2}}\ln \frac{C_{0}\eta_{0}}{\eta}$, i.e.,  $\eta e^{\frac{(A-1)^{2}}{4A}T_{\eta}}=C_{0}\eta_{0}$.

We pick the initial data by considering the linear counterpart of \eqref{PREdf_u} as follows.  
\begin{lemma}\label{LININSTA}
There exist 
  $(n_{0,\eta},c_{0,\eta})\in(H^{s}\times H^{s+1})(\R^{d})$ and $\theta\in(0,1)$ such that $\| n_{0,\eta}\|_{ H^{s}(\R^{d})}= \eta$,  $\| c_{0,\eta}\|_{ H^{s+1}(\R^{d})}= 0$,  
\begin{equation}\label{THETATETA}
e^{\theta\frac{(A-1)^{2}}{4A}T_{\eta}}\ge \frac{1}{2}e^{\frac{(A-1)^{2}}{4A}T_{\eta}},
\end{equation}
and for all $t>0$
 \[
\begin{aligned}
 &e^{\theta\frac{(A-1)^{2}}{4A}t} \|\langle \widehat{u_{0,\eta}},a_{-} \rangle \langle b_{-}, e_{1} \rangle\|_{L^{2}(\R^{d})}
\\&\qquad \le \|e^{-\lambda_{-}t} \langle \widehat{u_{0,\eta}},a_{-} \rangle \langle b_{-},e_{1} \rangle  \|_{L^{2}(\R^{d})}
\\&\qquad\qquad\le 
e^{\frac{(A-1)^{2}}{4A}t} \|\langle \widehat{u_{0,\eta}},a_{-} \rangle \langle b_{-}, e_{1} \rangle \|_{L^{2}(\R^{d})},
\end{aligned}
\]
where $\widehat{u_{0,\eta}}:=(\widehat{n_{0,\eta}},\widehat{c_{0,\eta}})^{\mathsf{T}}$.
\end{lemma}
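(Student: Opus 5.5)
Take $c_{0,\eta}=0$ and let $\widehat{n_{0,\eta}}$ be supported in a thin spherical shell around the most unstable frequency $|\xi|=\sigma_*:=\sqrt{\frac{A^2-1}{4A}}$. Concretely, fix a nonnegative radial $\phi\in \mathcal{C}_c^\infty$ supported in $\{\,||\xi|-\sigma_*|\le r\,\}$ with $r>0$ small. Set $\widehat{n_{0,\eta}}=\kappa\phi$, and pick the real constant $\kappa>0$ so that $\|n_{0,\eta}\|_{H^s}=\eta$. This is possible by scaling, since the $H^s$ norm is homogeneous of degree one in $\kappa$. The function $n_{0,\eta}$ is real because $\phi$ is radial, hence even. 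It lies in $H^s$ (indeed in $\mathcal{S}$) because $\phi$ is compactly supported and smooth. Trivially $\|c_{0,\eta}\|_{H^{s+1}}=0$.

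The two-sided estimate is then a pointwise bound on the multiplier $e^{-\lambda_-(|\xi|)t}$ over the support. By \eqref{LAMML}, $\lambda_-\ge -\frac{(A-1)^2}{4A}$ everywhere, so $e^{-\lambda_- t}\le e^{\frac{(A-1)^2}{4A}t}$ for every $\xi$. Integrating this bound against $|\langle \widehat{u_{0,\eta}},a_-\rangle\langle b_-,e_1\rangle|^2$ gives the upper inequality.

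For the lower inequality, use continuity of $\lambda_-$ at $\sigma_*$. Since $\lambda_-(\sigma_*)=-\frac{(A-1)^2}{4A}<0$, for any $\theta\in(0,1)$ we can choose $r$ small enough that
\[
\lambda_-(|\xi|)\le -\theta\frac{(A-1)^2}{4A}\quad\text{on the shell}.
\]
Then $e^{-\lambda_- t}\ge e^{\theta\frac{(A-1)^2}{4A}t}$ on the support of $\widehat{n_{0,\eta}}$, and integrating gives the lower inequality for all $t>0$.

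It remains to fix $\theta$ so that \eqref{THETATETA} holds. That inequality is equivalent to
\[
(1-\theta)\frac{(A-1)^2}{4A}T_\eta\le \ln 2,
\]
i.e. $\theta\ge 1-\frac{4A\ln 2}{(A-1)^2T_\eta}$. Since $T_\eta<\infty$ is fixed once $\eta$ is fixed, choose $\theta:=\max\{\tfrac12,\,1-\frac{4A\ln 2}{(A-1)^2T_\eta}\}\in(0,1)$. After that, choose $r$ accordingly; this is where the order of the choices matters. Both $\theta$ and the shell width depend on $\eta$, which the statement allows. One could shrink the shell further if later steps need $\widehat{n_{0,\eta}}$ localized near $|\xi|\in(\sqrt{2(A-1)}^{-1}\cdot 0,\,\cdot)$, but that is not required here.

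The main point to check is that the projected component $\langle \widehat{u_{0,\eta}},a_-\rangle\langle b_-,e_1\rangle$ is nonzero. Otherwise the norms would be trivial, though the inequalities would then hold anyway. With $c_0=0$ this component equals
\[
\widehat{n_0}\,\frac{-2}{1-\sqrt{1+4A|\xi|^2}}\cdot\frac{-A|\xi|^2}{-\sqrt{1+4A|\xi|^2}}\cdot(-1)^{\cdot},
\]
a multiple of $\widehat{n_0}$ whose factor is smooth and nonvanishing away from $\xi=0$. This factor is bounded above and below on the shell, which is also what later estimates will use to compare $\|n_{0,\eta}\|_{H^s}$ with this quantity, via the constant $C_0$.
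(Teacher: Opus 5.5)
Your proposal is correct and follows essentially the same route as the paper. Both set $c_{0,\eta}=0$, concentrate $\widehat{n_{0,\eta}}$ on a thin shell around $|\xi|=\sqrt{\frac{A^2-1}{4A}}$, where $\lambda_-$ attains its minimum $-\frac{(A-1)^2}{4A}$, normalize in $H^s$, fix $\theta$ close to $1$ to get \eqref{THETATETA}, and only then choose the shell width by continuity of $\lambda_-$. The differences are cosmetic: the paper uses the normalized indicator of the shell instead of your smooth radial bump, while you give an explicit admissible $\theta$. The paper also requires $\varepsilon<\frac12\sqrt{\frac{A^2-1}{4A}}$, which your construction can accommodate; this is not needed for the lemma itself, only for the later lower bounds on $L_1$, $M_1$, $N_1$.
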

For the proof, see Appendix~\ref{secA5}. 

We assume the hypotheses of Theorem~\ref{thm3}. In the subsequent analysis, let $(n,c)$ be the solution provided by Lemma~\ref{LEMLWP} corresponding to the initial data $(n_{0},c_{0})=(n_{0,\eta},c_{0,\eta})$ constructed in Lemma~\ref{LININSTA}. We define 
\[
\begin{aligned}
T_{*}:=\sup\{t<T_{\rm max}\,|\, \|n (\cdot,t)\|_{H^{s}}\le    2C_{0}\eta_{0}\}, \qquad
T_{**}:=\sup\{t<T_{\rm max}\,|\, \|n (\cdot,t)\|_{L^{2}}\le   3\eta e^{\frac{ (A-1)^2 }{4A  }t}\}.
\end{aligned}
\]
\subsection{A Priori Estimates}
  To prove Theorem~\ref{thm3}, we first establish the following lemma.\begin{lemma}\label{LEMHSHS1}
Let  $(n,c)$ be the solution given by Lemma~\ref{LEMLWP} under $(n_{0},c_{0})=(n_{0,\eta},c_{0,\eta})$. 
For any $T<T_{\rm max}$, it holds that
\begin{equation}\label{SEP010}
\sup_{t\le T} \|D  c(\cdot,t)\|_{L^{2}(\R^{d})} \le \sup_{t\le T}\|n(\cdot,t)\|_{L^{2}(\R^{d})} ,
\end{equation}
\begin{equation}\label{SEP0010}
 \sup_{t\le T}\|c(\cdot,t)\|_{H^{s+1}(\R^{d})} \le \sqrt{2} \sup_{t\le T}\|n(\cdot,t)\|_{H^{s}(\R^{d})}.
\end{equation}
\end{lemma}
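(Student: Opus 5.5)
The plan is to exploit the fact that the $c$-equation $\partial_t c+(1-\Delta)c=n$ is linear and decoupled from the nonlinearity. Energy estimates then suffice, combined with $c_{0,\eta}=0$ from Lemma~\ref{LININSTA}; nothing involving $A$ or the nonlinear term $-\nabla\cdot(n\nabla c)$ enters.

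For \eqref{SEP010}, take the $L^{2}$ inner product of $\eqref{eq_crt}_{2}$ with $-\Delta c$, exactly as in \eqref{LEM32_2} with $k=1$:
\[
\frac12\frac{\ud}{\ud t}\|Dc\|_{L^2}^2+\|Dc\|_{L^2}^2+\|D^2c\|_{L^2}^2=\int_{\R^d}\nabla n\cdot\nabla c\,\ud x=-\int_{\R^d} n\,\Delta c\,\ud x\le \|n\|_{L^2}\|D^2c\|_{L^2}.
\]
Young's inequality gives $\|n\|_{L^2}\|D^2c\|_{L^2}\le \frac14\|n\|_{L^2}^2+\|D^2c\|_{L^2}^2$. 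Hence, with $M:=\sup_{t\le T}\|n(\cdot,t)\|_{L^2}$,
\[
\frac{\ud}{\ud t}\|Dc\|_{L^2}^2+2\|Dc\|_{L^2}^2\le \frac12 M^2 .
\]
Since $Dc(0)=0$, Gr\"onwall yields $\|Dc(t)\|_{L^2}^2\le \frac14 M^2(1-e^{-2t})\le M^2$, which is \eqref{SEP010} with room to spare. Equivalently, one can argue in Fourier variables: $\widehat c(t)=\int_0^t e^{-(1+|\xi|^2)(t-\tau)}\widehat n(\tau)\,\ud\tau$, so $|\xi||\widehat c(\xi,t)|\le \frac{|\xi|}{1+|\xi|^2}\sup_\tau|\widehat n(\xi,\tau)|$ with $\frac{|\xi|}{1+|\xi|^2}\le\frac12$. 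However, the sup in $\tau$ sits inside the $\xi$-integral there, so the energy route is cleaner.

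For \eqref{SEP0010}, do the same at every level $k=0,\dots,s+1$ using the $H^{s+1}$ norm. Pairing $\eqref{eq_crt}_{2}$ with $(1-\Delta)^{s+1}c$ and writing $\Lambda=(1-\Delta)^{1/2}$ gives
\[
\frac12\frac{\ud}{\ud t}\|c\|_{H^{s+1}}^2+\|\Lambda^{s+2}c\|_{L^2}^2=\langle \Lambda^{s}n,\Lambda^{s+2}c\rangle\le \frac12\|n\|_{H^s}^2+\frac12\|\Lambda^{s+2}c\|_{L^2}^2 .
\]
Using $\|\Lambda^{s+2}c\|_{L^2}\ge\|c\|_{H^{s+1}}$, this becomes
\[
\frac{\ud}{\ud t}\|c\|_{H^{s+1}}^2+\|c\|_{H^{s+1}}^2\le \sup_{\tau\le T}\|n(\tau)\|_{H^s}^2 ,
\]
and with $c(0)=0$ we get $\|c(t)\|_{H^{s+1}}^2\le (1-e^{-t})\sup_{\tau\le T}\|n(\tau)\|_{H^s}^2$. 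This is in fact better than the stated constant $\sqrt2$. The constant $\sqrt2$ would instead arise if one splits $\|c\|_{H^{s+1}}^2\le\|c\|_{L^2}^2+\|Dc\|_{H^{s}}^2$ and bounds each piece separately by $\sup\|n\|_{H^s}^2$.

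There is no real obstacle beyond justifying the energy identities. The regularity $c\in L^2(0,t;H^{s+2})$ and $n\in \mathcal C([0,t);H^s)$ from Lemma~\ref{LEMLWP} makes the pairings legitimate, and they can be rigorized by the smoothness of the solution for $t>0$ together with continuity at $t=0$.
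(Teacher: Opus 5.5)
Your argument is correct and is essentially the paper's: energy estimates for the linear, decoupled $c$-equation with $c_{0,\eta}=0$, closed by solving a linear ODI. The only difference is that you pair once with $(1-\Delta)^{s+1}c$, whereas the paper sums the homogeneous estimates for $D^{k}c$, $k=1,\dots,s+1$, and adds a separate $L^{2}$ bound; your version gives the sharper constant $1$ and works directly with the Bessel norm $\|f\|_{H^{s}}=\|(1+|\xi|^{2})^{s/2}\widehat{f}\|_{L^{2}}$ that the paper uses, while the paper's closing step $\|c\|_{H^{s+1}}^{2}\le\|c\|_{L^{2}}^{2}+\|Dc\|_{H^{s}}^{2}$ tacitly relies on the sum-of-seminorms convention.
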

\begin{proof}
Let $ t<T_{\rm max}$.
For integer $k\in[1,s+1]$, using $\eqref{eq_crt}_{2}$ and Young's inequality, we have
\[
\begin{aligned}
		\frac {1}{2}\frac {\ud}{\ud t} \|D^k c\|_{L^{2}}^2  + \|D^k c\|_{L^{2}}^2 + \|D^{k+1} c\|_{L^{2}}^2  &\le \int_{\R^{d}} |D^{k-1} n| |D^{k+1} c| \,\ud x \\&\le \frac{1}{2} \|D^{k-1} n\|_{L^{2}}^2+\frac{1}{2} \|D^{k+1} c\|_{L^{2}}^2.
\end{aligned}
\]
This inequality for $k=1$ gives 
	 $\frac {\ud}{\ud t} \|D  c\|_{L^{2}}^2  + \|D  c\|_{L^{2}}^2      \le   \|  n\|_{L^{2}}^2$ and thus, solving this ODI, we have \eqref{SEP010}. 
Similarly,  adding the above inequality for $k=1,\cdots,s+1$ yields
$
	 \frac {\ud}{\ud t} \|D  c\|_{H^{s}}^2  + \|D  c\|_{H^{s}}^2      \le   \|  n\|_{H^{s}}^2$ 
and thus, solving this ODI, we obtain  
\begin{equation}\label{SEP10}
\sup_{t\le T} \|D  c(\cdot,t)\|_{H^{s}}^2\le \sup_{t\le T}\|n(\cdot,t)\|_{H^{s}}^2\quad\mbox{for all}\quad T<T_{\rm max}.
\end{equation}
If we take the $L^{2}$ scalar product of the $c$-equation with $c$ and apply Young's inequality, then  
$
		\frac {1}{2}\frac {\ud}{\ud t} \|  c\|_{L^{2}}^2  + \| c\|_{L^{2}}^2  = - \|D  c\|_{L^{2}}^2+\int_{\R^{d}} nc \,\ud x  
		 \le \frac{1}{2} \|  n\|_{L^{2}}^2+\frac{1}{2} \|  c\|_{L^{2}}^2$.
Solving this ODI, we obtain
\[
\sup_{t\le T} \|   c(\cdot,t)\|_{L^{2}}^2\le \sup_{t\le T}\|n(\cdot,t)\|_{L^{2}}^2\quad\mbox{for all}\quad T<T_{\rm max}.
\]
Adding it with \eqref{SEP10} and square rooting both sides,
we have the desired result, \eqref{SEP0010}.
\end{proof}
\subsection{Main Proofs}
We prove Theorem~\ref{thm3}.  Note from  Lemma~\ref{LEMLWP}, Lemma~\ref{LININSTA}, and a direct computation that
$0<T_{*}<T_{\rm max}$, $T_{**}>0$, $T_{\eta}\in(0,\infty)$.
We begin by establishing
\[
\min\{T_{\eta},T_{*},T_{**}\}= T_{\eta}.
\]
To establish this, we show that $\min\{T_{\eta},T_{*},T_{**}\}$ equals neither $T_{*}$ nor $T_{**}$, treating the two cases in turn.
\subsubsection{$\min\{T_{\eta},T_{*},T_{**}\}\not= T_{*}$.}
\noindent Assume to the contrary that 
$
\min\{T_{\eta},T_{*},T_{**}\}=T_{*}$.
Note that
\[
\|n (\cdot,T_{*})\|_{H^{s}}=\|(1+|\xi|^{2})^{\frac{s}{2}}\widehat{n}(\cdot,T_{*})\|_{L^{2}}=   2C_{0}\eta_{0}.
\]
From \eqref{PREdf_u}, we can compute
\[
\|(1+|\xi|^{2})^{\frac{s}{2}}\widehat{n}(\cdot,T_{*})\|_{L^{2}}\le  J^{*}_{1}+J^{*}_{2}+J^{*}_{3}+J^{*}_{4},
\]
where
 \[
J^{*}_{1}:=  \|e^{-\lambda_{-}T_{*}} (1+|\xi|^{2})^{\frac{s}{2}}\langle \widehat{u_{0,\eta}},a_{-} \rangle \langle b_{-},e_{1} \rangle  \|_{L^{2}},\qquad
J^{*}_{2}:=\|e^{-\lambda_{+}T_{*}} (1+|\xi|^{2})^{\frac{s}{2}}\langle \widehat{u_{0,\eta}},a_{+} \rangle \langle b_{+},e_{1} \rangle  \|_{L^{2}},
 \]
\[
J^{*}_{3}:=\norm{\int_{0}^{T_{*}}e^{-\lambda_{-}(T_{*}-\tau)}(1+|\xi|^{2})^{\frac{s}{2}}  i\xi\cdot\widehat{n \nabla c}\frac{1+\sqrt{1+4A|\xi|^{2}}}{2\sqrt{1+4A|\xi|^{2}}} \,\ud\tau}_{L^{2}},
\]
\[
J^{*}_{4}:=\norm{\int_{0}^{T_{*}}e^{-\lambda_{+}(T_{*}-\tau)}(1+|\xi|^{2})^{\frac{s}{2}} i\xi\cdot\widehat{n \nabla c}\frac{-2}{1+\sqrt{1+4A|\xi|^2{}}}\frac{- A|\xi|^{2}}{\sqrt{1+4A|\xi|^{2}}}
\,\ud\tau}_{L^{2}}.
\]
By a direct computation, \eqref{LAMML}, \eqref{NOHSNORM},  and the definition of $T_{\eta}$,
we have
\[
J_{1}^{*} =\norm{ e^{-\lambda_{-}T_{*}}\frac{1+\sqrt{1+4A|\xi|^{2}}}{2\sqrt{1+4A|\xi|^{2}}} (1+|\xi|^{2})^{\frac{s}{2}} \widehat{n_{0,\eta}} }_{L^{2}}
  \le    e^{ \frac{ (A-1)^2 }{4A  }T_{*}} \|n_{0,\eta}\|_{H^{s}}
  \le\eta e^{ \frac{ (A-1)^2 }{4A  }T_{\eta}}
 = C_{0}\eta_{0}.
\]
Since $-\lambda_{+}\le -\lambda_{-}$, we also obtain  
\[
J_{2}^{*} =\norm{ e^{-\lambda_{+}T_{*}}\frac{-2}{1+\sqrt{1+4A|\xi|^2{}}}\frac{-A|\xi|^{2}}{\sqrt{1+4A|\xi|^{2}}} (1+|\xi|^{2})^{\frac{s}{2}}\widehat{n_{0,\eta} } }_{L^{2}}
 \le \frac{1}{2}     e^{ \frac{ (A-1)^2 }{4A  }T_{*}} \|n_{0,\eta}\|_{H^{s}}
 \le \frac{1}{2}C_{0}\eta_{0}.
\]
To compute $J_{3}^{*}$, we note from \eqref{DEFTMU} that  
\[
\lambda_{-}(|\xi|)> \lambda_{0},
\quad
\frac{|\xi|}{\sqrt{\lambda_{-}}}\le C_{2}\quad \mbox{ for all } |\xi|>\sqrt{2(A-1)}.
\]
We further use
\eqref{LAMML}, the Plancherel theorem, H\"older's inequality, \eqref{FGHS}, and $e^{-x}\sqrt{x}\le \frac{1}{\sqrt{2 e}}$ for all  $x\ge0$ to show
\[
\begin{aligned}
J_{3}^{*}&\le  \norm{\int_{0}^{T_{*}}e^{-\lambda_{-}(T_{*}-\tau)}(1+|\xi|^{2})^{\frac{s}{2}}  |\xi||\widehat{n \nabla c}| \,\ud\tau}_{L^{2}(\{|\xi|\le\sqrt{2(A-1)}\}\cup \{|\xi|>\sqrt{2(A-1)}\})}
\\&\le   (2A-1)^{\frac{s}{2}}\sqrt{2(A-1)}  \int_{0}^{T_{*}} e^{\frac{(A-1)^{2}}{4A}(T_{*}-\tau)}   \|  n \nabla c   \|_{L^{2}}\,\ud\tau  
\\&\quad+\sqrt{2}\norm{ \int_{0}^{T_{*}} e^{-\lambda_{-}(T_{*}-\tau)}\sqrt{\frac{\lambda_{-}(T_{*}-\tau)}{2}} \frac{(1+|\xi|^{2})^{\frac{s}{2}}|\xi|  |\widehat{n \nabla c}|}{\sqrt{\lambda_{-}(T_{*}-\tau)}} \,\ud\tau  }_{L^{2}(|\xi|>\sqrt{2(A-1)})}
\\&\le   (2A-1)^{\frac{s}{2}}\sqrt{2(A-1)}  C_{d,s} \int_{0}^{T_{*}} e^{\frac{(A-1)^{2}}{4A}(T_{*}-\tau)}   \|  n \|_{L^{2}} 
 \|D  c\|_{L^{2}}^{\frac{2s-d}{2s}}\|   c\|_{H^{s+1}}^{\frac{d}{2s}}
\,\ud\tau  
\\&\quad+\frac{C_{2}}{\sqrt{e}}\norm{ \int_{0}^{T_{*}}   \frac{e^{-\frac{1}{2}\lambda_{0}(T_{*}-\tau)}}{\sqrt{ T_{*}-\tau }}(1+|\xi|^{2})^{\frac{s}{2}}  |\widehat{n \nabla c}| \,\ud\tau  }_{L^{2}(|\xi|>\sqrt{2(A-1)})}.
\end{aligned}
\]
Using Lemma~\ref{LEMHSHS1}, the definitions of $T_{**}$ and $T_{\eta}$, we compute the integral in the first term on the right-hand side as
\[
\begin{aligned}
  \int_{0}^{T_{*}} e^{\frac{(A-1)^{2}}{4A}(T_{*}-\tau)}   \|  n \|_{L^{2}} 
 \|D  c\|_{L^{2}}^{\frac{2s-d}{2s}}\|   c\|_{H^{s+1}}^{\frac{d}{2s}}
\,\ud\tau  
  &\le   \int_{0}^{T_{*}} e^{\frac{(A-1)^{2}}{4A}(T_{*}-\tau)} (3\eta e^{\frac{(A-1)^{2}}{4A}\tau})^{\frac{4s-d}{2s}} (2\sqrt{2}C_{0}\eta_{0})^{\frac{d}{2s}}d\tau
\\&  \le  e^{\frac{(A-1)^{2}}{4A}T_{*}} (3\eta   )^{\frac{4s-d}{2s}}(2\sqrt{2}C_{0}\eta_{0})^{\frac{d}{2s}}\int_{0}^{T_{*}}  ( e^{\frac{(A-1)^{2}}{4A}\tau})^{\frac{2s-d}{2s}} d\tau
\\&  \le   (3\eta e^{\frac{(A-1)^{2}}{4A}T_{*}} )^{\frac{4s-d}{2s}}(2\sqrt{2}C_{0}\eta_{0})^{\frac{d}{2s}}\frac{1}{\frac{(A-1)^{2}}{4A}\frac{2s-d}{2s}}
\\&  \le   (3C_{0})^{\frac{4s-d}{2s}}(2\sqrt{2}C_{0})^{\frac{d}{2s}}\frac{1}{\frac{(A-1)^{2}}{4A}\frac{2s-d}{2s}}\eta_{0}^{2}.
\end{aligned}
\]
On the other hand, we use 
\[
\int_{0}^{T}    \frac{e^{-\frac{1}{2}\lambda_{0}(T-\tau)}}{\sqrt{ T-\tau }} \,\ud\tau=\int_{0}^{T}    \frac{e^{-\frac{1}{2}\lambda_{0}\sigma}}{\sqrt{\sigma}} \,\ud\sigma\le  C_{1} \quad \mbox{ for all }\,\,T>0, 
\]
the Plancherel theorem, \eqref{FGHS1}, and \eqref{SEP0010} to compute the rightmost term as 
\[
\begin{aligned}
&\frac{C_{2}}{\sqrt{e}}\norm{ \int_{0}^{T_{*}}   \frac{e^{-\frac{1}{2}\lambda_{0}(T_{*}-\tau)}}{\sqrt{ T_{*}-\tau }}(1+|\xi|^{2})^{\frac{s}{2}}  |\widehat{n \nabla c}| \,\ud\tau  }_{L^{2}(|\xi|>\sqrt{2(A-1)})}
\\&\quad \le \frac{C_{2}}{\sqrt{e}} \int_{0}^{T_{*}}    \frac{e^{-\frac{1}{2}\lambda_{0}(T_{*}-\tau)}}{\sqrt{ T_{*}-\tau }} \,\ud\tau (\sup_{\sigma\le T_{*}}\|( n \nabla c)(\cdot,\sigma)   \|_{H^{s}})
\\&\quad \le \sqrt{2} \frac{C_{2}}{\sqrt{e}}C_{1}\tilde{C}_{d,s}( \sup_{\sigma\le T_{*}}\| n(\cdot,\sigma) \|_{H^{s}})^{2}
\\&\quad \le 9\frac{C_{2}}{\sqrt{e}}C_{1}\tilde{C}_{d,s} C_{0}^{2}\eta_{0}^{2}.
\end{aligned}
\]
Combining the above estimates gives
\[
\begin{aligned}
J_{3}^{*}\le  \biggr{(} \frac{(2A-1)^{\frac{s}{2}}\sqrt{2(A-1)}  C_{d,s}}{\frac{(A-1)^{2}}{4A}\frac{2s-d}{2s}}(3C_{0})^{\frac{4s-d}{2s}}(2\sqrt{2}C_{0})^{\frac{d}{2s}}+9\frac{C_{2}}{\sqrt{e}}C_{1}\tilde{C}_{d,s}    C_{0}^{2}\biggr{)}\eta_{0}^{2}.
\end{aligned}
\]
Similarly, due to $-\lambda_{+}\le -\lambda_{-}$, we can obtain
\[
\begin{aligned}
J_{4}^{*}&\le \frac{1}{2}\norm{\int_{0}^{T_{*}}e^{-\lambda_{-}(T_{*}-\tau)}(1+|\xi|^{2})^{\frac{s}{2}} |\xi| |\widehat{n \nabla c}|
\,\ud\tau}_{L^{2}(\{|\xi|\le\sqrt{2(A-1)}\}\cup \{|\xi|>\sqrt{2(A-1)}\})}
\\&\le \frac{1}{2} \biggr{(} \frac{(2A-1)^{\frac{s}{2}}\sqrt{2(A-1)}  C_{d,s}}{\frac{(A-1)^{2}}{4A}\frac{2s-d}{2s}}(3C_{0})^{\frac{4s-d}{2s}}(2\sqrt{2}C_{0})^{\frac{d}{2s}}+9\frac{C_{2}}{\sqrt{e}}C_{1}\tilde{C}_{d,s}    C_{0}^{2}\biggr{)}\eta_{0}^{2}.
\end{aligned}
\]
Thus, by the definition of $\eta_{0}$ in \eqref{DEFTMU}, we have a contradiction since 
\[
2C_{0}\eta_{0}  =\|n (\cdot,T_{*})\|_{H^{s}}
 \le J_{1}^{*}+J_{2}^{*}+J_{3}^{*}+J_{4}^{*}
 \le   \frac{3}{2}C_{0}\eta_{0}+ \frac{1}{4}C_{0}\eta_{0}=\frac{7}{4}C_{0}\eta_{0}.
\]
\subsubsection{$
\min\{T_{\eta},T_{*},T_{**}\}\not= T_{**}$.} 
\noindent Turning to the second case, suppose to the contrary that $\min\{T_{\eta},T_{*},T_{**}\}=T_{**}$. Note that  
\[
\|n (\cdot,T_{**})\|_{L^{2}}=\|\widehat{n}(\cdot,T_{**})\|_{L^{2}}=   3\eta e^{\frac{ (A-1)^2 }{4A  }T_{**}}
\]
and that by \eqref{PREdf_u},
 \[
 \|\widehat{n}(\cdot,T_{**})\|_{L^{2}}\le  J^{**}_{1}+J^{**}_{2}+J^{**}_{3}+J^{**}_{4},
 \]
 where
 \[
J^{**}_{1}:=  \|e^{-\lambda_{-}T_{**}} \langle \widehat{u_{0,\eta}},a_{-} \rangle \langle b_{-},e_{1} \rangle  \|_{L^{2}},
\qquad
J^{**}_{2}:=\|e^{-\lambda_{+}T_{**}} \langle \widehat{u_{0,\eta}},a_{+} \rangle \langle b_{+},e_{1} \rangle  \|_{L^{2}},
 \]
\[
J^{**}_{3}:=\norm{\int_{0}^{T_{**}}e^{-\lambda_{-}(T_{**}-\tau)}i\xi\cdot\widehat{n \nabla c}\frac{1+\sqrt{1+4A|\xi|^{2}}}{2\sqrt{1+4A|\xi|^{2}}} \,\ud\tau}_{L^{2}},
\]
\[
J^{**}_{4}:=\norm{\int_{0}^{T_{**}}e^{-\lambda_{+}(T_{**}-\tau)}i\xi\cdot\widehat{n \nabla c}\frac{-2}{1+\sqrt{1+4A|\xi|^2{}}}\frac{- A|\xi|^{2}}{\sqrt{1+4A|\xi|^{2}}}
\,\ud\tau}_{L^{2}}.
\]
The computations for $J^{**}_{k}$, $k=1,2,3,4$, are similar to those of $J^{*}_{k}$. Indeed, we have  
\[
J^{**}_{1}=\norm{ e^{-\lambda_{-}T_{**}}\frac{1+\sqrt{1+4A|\xi|^{2}}}{2\sqrt{1+4A|\xi|^{2}}} \widehat{n_{0,\eta}} }_{L^{2}}
  \le    e^{ \frac{ (A-1)^2 }{4A  }T_{**}} \|\widehat{n_{0,\eta}}\|_{L^{2}}
\le \eta e^{ \frac{ (A-1)^2 }{4A  }T_{**}},
\]
\[
J^{**}_{2} =\norm{ e^{-\lambda_{+}T_{**}}\frac{-2}{1+\sqrt{1+4A|\xi|^2{}}}\frac{-A|\xi|^{2}}{\sqrt{1+4A|\xi|^{2}}}\widehat{n_{0,\eta}} }_{L^{2}}
 \le \frac{1}{2}    e^{ \frac{ (A-1)^2 }{4A  }T_{**}}\|n_{0,\eta}\|_{L^{2}}
  \le \frac{1}{2}\eta e^{ \frac{ (A-1)^2 }{4A  }T_{**}},
\]
\[
\begin{aligned}
J^{**}_{3}& \le \norm{\int_{0}^{T_{**}}e^{-\lambda_{-}(T_{**}-\tau)}  |\xi||\widehat{n \nabla c}| \,\ud\tau}_{L^{2}(\{|\xi|\le\sqrt{2(A-1)}\}\cup \{|\xi|>\sqrt{2(A-1)}\})}
\\&\le   \sqrt{2(A-1)}  \int_{0}^{T_{**}} e^{\frac{(A-1)^{2}}{4A}(T_{**}-\tau)}   \|  n \nabla c   \|_{L^{2}}\,\ud\tau  
\\&\quad+\sqrt{2}\norm{ \int_{0}^{T_{**}} e^{-\lambda_{-}(T_{**}-\tau)}\sqrt{\frac{\lambda_{-}(T_{**}-\tau)}{2}} \frac{|\xi|  |\widehat{n \nabla c}|}{\sqrt{\lambda_{-}(T_{**}-\tau)}} \,\ud\tau  }_{L^{2}(|\xi|>\sqrt{2(A-1)})}
\\&\le  \sqrt{2(A-1)}  C_{d,s} \int_{0}^{T_{**}} e^{\frac{(A-1)^{2}}{4A}(T_{**}-\tau)}   \|  n \|_{L^{2}} 
 \|D  c\|_{L^{2}}^{\frac{2s-d}{2s}}\|   c\|_{H^{s+1}}^{\frac{d}{2s}}
\,\ud\tau  
\\&\quad+\frac{C_{2}}{\sqrt{e}}\norm{ \int_{0}^{T_{**}}   \frac{e^{-\frac{1}{2}\lambda_{0}(T_{**}-\tau)}}{\sqrt{ T_{**}-\tau }}   |\widehat{n \nabla c}| \,\ud\tau  }_{L^{2}(|\xi|>\sqrt{2(A-1)})}
\\&\le   \frac{ \sqrt{2(A-1)}  C_{d,s}}{\frac{(A-1)^{2}}{4A}\frac{2s-d}{2s}} (3\eta e^{\frac{(A-1)^{2}}{4A}T_{**}} )^{\frac{4s-d}{2s}}(2\sqrt{2}C_{0}\eta_{0})^{\frac{d}{2s}}+\frac{C_{2}}{\sqrt{e}} C_{1}  (3\eta e^{\frac{(A-1)^{2}}{4A}T_{**}} )^{2},
\end{aligned}
\]
\[
\begin{aligned}
J^{**}_{4}&\le \frac{1}{2}   \norm{\int_{0}^{T_{**}}e^{-\lambda_{-}(T_{**}-\tau)}  |\xi||\widehat{n \nabla c}| \,\ud\tau}_{L^{2}(\{|\xi|\le\sqrt{2(A-1)}\}\cup \{|\xi|>\sqrt{2(A-1)}\})}
\\&\le \frac{1}{2}\Biggr{(} \frac{ \sqrt{2(A-1)}  C_{d,s}}{\frac{(A-1)^{2}}{4A}\frac{2s-d}{2s}} (3\eta e^{\frac{(A-1)^{2}}{4A}T_{**}} )^{\frac{4s-d}{2s}}(2\sqrt{2}C_{0}\eta_{0})^{\frac{d}{2s}}+\frac{C_{2}}{\sqrt{e}} C_{1}  (3\eta e^{\frac{(A-1)^{2}}{4A}T_{**}} )^{2}\Biggr{)}.
\end{aligned}
\]
Combining the above estimates, after using $ 3\eta e^{\frac{(A-1)^{2}}{4A}T_{**}}  \le 3\eta e^{\frac{(A-1)^{2}}{4A}T_{\eta}} =3C_{0}\eta_{0} $ and the definition of $\eta_{0}$ in \eqref{DEFTMU},
leads to a contradiction since 
\[
3\eta e^{\frac{ (A-1)^2 }{4A  }T_{**}}=\|n (\cdot,T_{**})\|_{L^{2}}\le J^{**}_{1}+J^{**}_{2}+J^{**}_{3}+J^{**}_{4}
\le  \frac{1}{2}\eta e^{ \frac{ (A-1)^2 }{4A  }T_{**}}+ \frac{1}{4}\eta e^{ \frac{ (A-1)^2 }{4A  }T_{**}}= \frac{3}{4}\eta e^{ \frac{ (A-1)^2 }{4A  }T_{**}}.
\]

\noindent 
Therefore, we have the desired result, $\min\{T_{\eta},T_{*},T_{**}\}= T_{\eta}$.

Finally, we establish the following lower bounds for the $L^{2}$-norms of $n$, $c$, and $\nabla c$ at time $t=T_{\eta}$, which confirm the escape of the solution from the $\eta$-scale neighborhood, as formulated in \eqref{THM13GOAL}. 
\subsubsection{Lower Estimate for $n$.}
 
\noindent We show 
$
\| n(\cdot,T_{\eta})\|_{L^{2}}\ge \frac{\eta_{0}}{4}$.
 Note from \eqref{PREdf_u}  that  
$
 \|\widehat{n}(\cdot,T_{\eta})\|_{L^{2}}\ge  L_{1}-L_{2}-L_{3}-L_{4}$,
 where
  \[
L_{1}:=  \|e^{-\lambda_{-}T_{\eta}} \langle \widehat{u_{0,\eta}},a_{-} \rangle \langle b_{-},e_{1} \rangle  \|_{L^{2}},
\qquad
L_{2}:=  \|e^{-\lambda_{+}T_{\eta}} \langle \widehat{u_{0,\eta}},a_{+} \rangle \langle b_{+},e_{1} \rangle  \|_{L^{2}},
\]
\[
L_{3}:=\norm{\int_{0}^{T_{\eta}}e^{-\lambda_{-}(T_{\eta}-\tau)}i\xi\cdot\widehat{n \nabla c}\frac{1+\sqrt{1+4A|\xi|^{2}}}{2\sqrt{1+4A|\xi|^{2}}} \,\ud\tau}_{L^{2}},
\]
\[
L_{4}:=\norm{\int_{0}^{T_{\eta}}e^{-\lambda_{+}(T_{\eta}-\tau)}i\xi\cdot\widehat{n \nabla c}\frac{-2}{1+\sqrt{1+4A|\xi|^2{}}}\frac{- A|\xi|^{2}}{\sqrt{1+4A|\xi|^{2}}}
\,\ud\tau}_{L^{2}}.
\]
By a direct computation, \eqref{LAMMLL}, and $\varepsilon<\frac{1}{2}\sqrt{\frac{A^{2}-1}{4A}}$ in \eqref{EPSTHETA}, we have
\[
 \begin{aligned}
L_{1}&=\norm{ e^{-\lambda_{-}T_{\eta}}\frac{1+\sqrt{1+4A|\xi|^{2}}}{2\sqrt{1+4A|\xi|^{2}}} \eta C_{\theta}\chi_{\Bigr{\{} \abs{ |\xi|- \sqrt{\frac{A^{2}-1}{4A}}  }\le \varepsilon\Bigr{\}}} }_{L^{2}}
\\& \ge\frac{1}{2}\eta e^{\theta\frac{(A-1)^{2}}{4A}T_{\eta}}
\Biggr{(}\sigma_{d}\int_{\sqrt{\frac{A^{2}-1}{4A}} -\varepsilon}^{\sqrt{\frac{A^{2}-1}{4A}} +\varepsilon}(1+r^{2})^{s}r^{d-1}\,\ud r\Biggr{)}^{-\frac{1}{2}}\Biggr{(}\sigma_{d}\int_{\sqrt{\frac{A^{2}-1}{4A}} -\varepsilon}^{\sqrt{\frac{A^{2}-1}{4A}} +\varepsilon}r^{d-1}\,\ud r\Biggr{)}^{\frac{1}{2}}
\\& \ge\frac{1}{2}\eta e^{\theta\frac{(A-1)^{2}}{4A}T_{\eta}}\bket{1+\biggr{(}\sqrt{\frac{A^{2}-1}{4A}}+\varepsilon\biggr{)}^{2}   }^{-\frac{s}{2}}
\\& \ge\frac{1}{2}\eta e^{\theta\frac{(A-1)^{2}}{4A}T_{\eta}}\bke{1+\frac{9(A^{2}-1)}{16A}}^{-\frac{s}{2}}.
\end{aligned}
\]
Then, by \eqref{THETATETA}, it follows that
\[
L_{1}\ge\frac{\eta}{4}e^{\frac{(A-1)^{2}}{4A}T_{\eta}}\bke{1+\frac{9(A^{2}-1)}{16A}}^{-\frac{s}{2}}
 =\frac{1}{4}\bke{1+\frac{9(A^{2}-1)}{16A}}^{-\frac{s}{2}}(C_{0}\eta_0).
\]
Using  a direct computation, $\lambda_{+}\ge1$   in \eqref{LAMML} and \eqref{NOHSNORM},  we have
\[
L_{2} 
 =\norm{ e^{-\lambda_{+}T_{\eta}}\frac{-2}{1+\sqrt{1+4A|\xi|^2{}}}\frac{-A|\xi|^{2}}{\sqrt{1+4A|\xi|^{2}}}\widehat{n_{0,\eta}} }_{L^{2}}
 \le \frac{1}{2}e^{-T_{\eta}}
\|n_{0,\eta}\|_{L^{2}} \le  \frac{\eta}{2} 
 \le \frac{\eta_{0}}{2}.
\]
The computations for $L_{k}$, $k=3,4$, are similar to those of $J^{**}_{k}$. Indeed, we have
\[
\begin{aligned}
L_{3}&\le\norm{\int_{0}^{T_{\eta}}e^{-\lambda_{-}(T_{\eta}-\tau)}|\xi| |\widehat{n \nabla c}|  \,\ud\tau}_{L^{2}(\{|\xi|\le\sqrt{2(A-1)}\}\cup \{|\xi|>\sqrt{2(A-1)}\})}
\\&\le   \sqrt{2(A-1)}  \int_{0}^{T_{\eta}} e^{\frac{(A-1)^{2}}{4A}(T_{\eta}-\tau)}   \|  n \nabla c   \|_{L^{2}}\,\ud\tau  
\\&\quad+\sqrt{2}\norm{ \int_{0}^{T_{\eta}} e^{-\lambda_{-}(T_{\eta}-\tau)}\sqrt{\frac{\lambda_{-}(T_{\eta}-\tau)}{2}} \frac{|\xi|  |\widehat{n \nabla c}|}{\sqrt{\lambda_{-}(T_{\eta}-\tau)}} \,\ud\tau  }_{L^{2}(|\xi|>\sqrt{2(A-1)})}
\\&\le  \sqrt{2(A-1)}  C_{d,s} \int_{0}^{T_{\eta}} e^{\frac{(A-1)^{2}}{4A}(T_{\eta}-\tau)}   \|  n \|_{L^{2}} 
 \|D  c\|_{L^{2}}^{\frac{2s-d}{2s}}\|   c\|_{H^{s+1}}^{\frac{d}{2s}}
\,\ud\tau  
\\&\quad+\frac{C_{2}}{\sqrt{e}}\norm{ \int_{0}^{T_{\eta}}   \frac{e^{-\frac{1}{2}\lambda_{0}(T_{\eta}-\tau)}}{\sqrt{ T_{\eta}-\tau }}   |\widehat{n \nabla c}| \,\ud\tau  }_{L^{2}(|\xi|>\sqrt{2(A-1)})}
\\&\le   \frac{ \sqrt{2(A-1)}  C_{d,s}}{\frac{(A-1)^{2}}{4A}\frac{2s-d}{2s}} (3\eta e^{\frac{(A-1)^{2}}{4A}T_{\eta}} )^{\frac{4s-d}{2s}}(2\sqrt{2}C_{0}\eta_{0})^{\frac{d}{2s}}+\frac{C_{2}}{\sqrt{e}} C_{1}  (3\eta e^{\frac{(A-1)^{2}}{4A}T_{\eta}} )^{2},
\\&\le   \frac{ \sqrt{2(A-1)}  C_{d,s}}{\frac{(A-1)^{2}}{4A}\frac{2s-d}{2s}} 3^{\frac{4s-d}{2s}}(2\sqrt{2}C_{0})^{\frac{d}{2s}}\eta_{0}^{2}+9\frac{C_{2}}{\sqrt{e}} C_{1}  \eta_{0}^{2},
\end{aligned}
\]
and
\[
\begin{aligned}
L_{4}&\le \frac{1}{2}   \norm{\int_{0}^{T_{**}}e^{-\lambda_{-}(T_{**}-\tau)}  |\xi||\widehat{n \nabla c}| \,\ud\tau}_{L^{2}(\{|\xi|\le\sqrt{2(A-1)}\}\cup \{|\xi|>\sqrt{2(A-1)}\})}
\\&\le \frac{1}{2}\Biggr{(}  \frac{ \sqrt{2(A-1)}  C_{d,s}}{\frac{(A-1)^{2}}{4A}\frac{2s-d}{2s}} 3^{\frac{4s-d}{2s}}(2\sqrt{2}C_{0})^{\frac{d}{2s}}\eta_{0}^{2}+9\frac{C_{2}}{\sqrt{e}} C_{1}  \eta_{0}^{2}\Biggr{)}.
\end{aligned}
\]
Combining the above estimates and using the definitions of $C_{0}$ and $\eta_{0}$ given in \eqref{DEFTMU}, we have
 \[
 \begin{aligned}
\|\widehat{n}(\cdot,T_{\eta})\|_{L^{2}}\ge  L_{1}-L_{2}-(L_{3}+L_{4})
\ge\frac{1}{4}\bke{1+\frac{9(A^{2}-1)}{16A}}^{-\frac{s}{2}}(C_{0}\eta_{0})-\frac{\eta_{0}}{2}-\frac{\eta_{0}}{4}
\ge\frac{\eta_0}{4}.
\end{aligned}
\]
\subsubsection{Lower Estimate for $c$.}
Next, we establish that  $\| c(\cdot,T_{\eta})\|_{L^{2}}\ge \frac{8\eta_{0}}{3}$.
Note from \eqref{PREdf_u}  that  
$
 \|\widehat{c}(\cdot,T_{\eta})\|_{L^{2}}\ge  M_{1}-M_{2}-M_{3}-M_{4}$,
 where
  \[
M_{1}:=  \|e^{-\lambda_{-}T_{\eta}} \langle \widehat{u_{0,\eta}},a_{-} \rangle \langle b_{-},e_{2} \rangle  \|_{L^{2}},
\qquad
M_{2}:=  \|e^{-\lambda_{+}T_{\eta}} \langle \widehat{u_{0,\eta}},a_{+} \rangle \langle b_{+},e_{2} \rangle  \|_{L^{2}},
\]
\[
M_{3}:=\norm{\int_{0}^{T_{\eta}}e^{-\lambda_{-}(T_{\eta}-\tau)} \frac{i\xi\cdot\widehat{n \nabla c}}{\sqrt{1+4A|\xi|^{2}}}
\,\ud\tau}_{L^{2}},\qquad 
M_{4}:=\norm{\int_{0}^{T_{\eta}}e^{-\lambda_{+}(T_{\eta}-\tau)} \frac{i\xi\cdot\widehat{n \nabla c}}{\sqrt{1+4A|\xi|^{2}}}
\,\ud\tau}_{L^{2}}.
\]
Similarly to $L_{k}$, $k=1,2,3,4$, we can compute 
 \[
 \begin{aligned}
M_{1}&=\norm{ e^{-\lambda_{-}T_{\eta}}\frac{1}{\sqrt{1+4A|\xi|^{2}}} \eta C_{\theta}\chi_{\Bigr{\{} \abs{ |\xi|- \sqrt{\frac{A^{2}-1}{4A}}  }\le \varepsilon\Bigr{\}}} }_{L^{2}}
\\& \ge
\frac{2}{\sqrt{9A^{2}-5}}
 \eta e^{\theta\frac{(A-1)^{2}}{4A}T_{\eta}}
\Biggr{(}\sigma_{d}\int_{\sqrt{\frac{A^{2}-1}{4A}} -\varepsilon}^{\sqrt{\frac{A^{2}-1}{4A}} +\varepsilon}(1+r^{2})^{s}r^{d-1}\,\ud r\Biggr{)}^{-\frac{1}{2}}\Biggr{(}\sigma_{d}\int_{\sqrt{\frac{A^{2}-1}{4A}} -\varepsilon}^{\sqrt{\frac{A^{2}-1}{4A}} +\varepsilon}r^{d-1}\,\ud r\Biggr{)}^{\frac{1}{2}}
\\& \ge \frac{2}{\sqrt{9A^{2}-5}}\bke{1+\frac{9(A^{2}-1)}{16A}}^{-\frac{s}{2}}(C_{0}\eta_0),
\end{aligned}
\]
\[
M_{2} 
 =\norm{ e^{-\lambda_{+}T_{\eta}}\frac{1}{\sqrt{1+4A|\xi|^{2}}}\widehat{n_{0,\eta}} }_{L^{2}}
 \le  e^{-T_{\eta}}
\|n_{0,\eta}\|_{L^{2}} \le   \eta  
 \le \eta_{0},
\]
\[
\begin{aligned}
M_{3}&\le\norm{\int_{0}^{T_{\eta}}e^{-\lambda_{-}(T_{\eta}-\tau)}|\xi| |\widehat{n \nabla c}|  \,\ud\tau}_{L^{2}(\{|\xi|\le\sqrt{2(A-1)}\}\cup \{|\xi|>\sqrt{2(A-1)}\})}
\\&\le   \frac{ \sqrt{2(A-1)}  C_{d,s}}{\frac{(A-1)^{2}}{4A}\frac{2s-d}{2s}} 3^{\frac{4s-d}{2s}}(2\sqrt{2}C_{0})^{\frac{d}{2s}}\eta_{0}^{2}+9\frac{C_{2}}{\sqrt{e}} C_{1}  \eta_{0}^{2},
\end{aligned}
\] 
\[
\begin{aligned}
M_{4}&\le   \norm{\int_{0}^{T_{**}}e^{-\lambda_{-}(T_{**}-\tau)}  |\xi||\widehat{n \nabla c}| \,\ud\tau}_{L^{2}(\{|\xi|\le\sqrt{2(A-1)}\}\cup \{|\xi|>\sqrt{2(A-1)}\})}
\\&\le  \Biggr{(}  \frac{ \sqrt{2(A-1)}  C_{d,s}}{\frac{(A-1)^{2}}{4A}\frac{2s-d}{2s}} 3^{\frac{4s-d}{2s}}(2\sqrt{2}C_{0})^{\frac{d}{2s}}\eta_{0}^{2}+9\frac{C_{2}}{\sqrt{e}} C_{1}  \eta_{0}^{2}\Biggr{)}.
\end{aligned}
\]
Thus, 
 \[
 \|\widehat{c}(\cdot,T_{\eta})\|_{L^{2}}\ge  \frac{2}{\sqrt{9A^{2}-5}}\bke{1+\frac{9(A^{2}-1)}{16A}}^{-\frac{s}{2}}(C_{0}\eta_0)-\eta_{0}-\frac{1}{3}\eta_{0}\ge\frac{8\eta_{0}}{3}.
 \] 
\subsubsection{Lower Estimate for $\nabla c$.} 
 Last, we show  $\|\nabla  c(\cdot,T_{\eta})\|_{L^{2}}\ge \frac{5\eta_{0}}{6}$.
Note from \eqref{PREdf_u}  that  
$
 \||\xi|\widehat{c}(\cdot,T_{\eta})\|_{L^{2}}\ge  N_{1}-N_{2}-N_{3}-N_{4}$,
 where
  \[
N_{1}:=  \|e^{-\lambda_{-}T_{\eta}}|\xi| \langle \widehat{u_{0,\eta}},a_{-} \rangle \langle b_{-},e_{2} \rangle  \|_{L^{2}},
\qquad
N_{2}:=  \|e^{-\lambda_{+}T_{\eta}} |\xi|\langle \widehat{u_{0,\eta}},a_{+} \rangle \langle b_{+},e_{2} \rangle  \|_{L^{2}},
\]
\[
N_{3}:=\norm{\int_{0}^{T_{\eta}}e^{-\lambda_{-}(T_{\eta}-\tau)} \frac{|\xi|i\xi\cdot\widehat{n \nabla c}}{\sqrt{1+4A|\xi|^{2}}}
\,\ud\tau}_{L^{2}},\qquad 
N_{4}:=\norm{\int_{0}^{T_{\eta}}e^{-\lambda_{+}(T_{\eta}-\tau)} \frac{|\xi|i\xi\cdot\widehat{n \nabla c}}{\sqrt{1+4A|\xi|^{2}}}
\,\ud\tau}_{L^{2}}.
\]
Similarly to $M_{k}$, $k=1,2,3,4$, we have 
\[
 \begin{aligned}
N_{1}&=\norm{ e^{-\lambda_{-}T_{\eta}}\frac{|\xi|}{\sqrt{1+4A|\xi|^{2}}} \eta C_{\theta}\chi_{\Bigr{\{} \abs{ |\xi|- \sqrt{\frac{A^{2}-1}{4A}}  }\le \varepsilon\Bigr{\}}} }_{L^{2}}
\\& \ge
\frac{\sqrt{A-A^{-1}}}{2\sqrt{9A^{2}-5}}
 \eta e^{\theta\frac{(A-1)^{2}}{4A}T_{\eta}}
\Biggr{(}\sigma_{d}\int_{\sqrt{\frac{A^{2}-1}{4A}} -\varepsilon}^{\sqrt{\frac{A^{2}-1}{4A}} +\varepsilon}(1+r^{2})^{s}r^{d-1}\,\ud r\Biggr{)}^{-\frac{1}{2}}\Biggr{(}\sigma_{d}\int_{\sqrt{\frac{A^{2}-1}{4A}} -\varepsilon}^{\sqrt{\frac{A^{2}-1}{4A}} +\varepsilon}r^{d-1}\,\ud r\Biggr{)}^{\frac{1}{2}}
\\& \ge \frac{\sqrt{A-A^{-1}}}{2\sqrt{9A^{2}-5}}\bke{1+\frac{9(A^{2}-1)}{16A}}^{-\frac{s}{2}}(C_{0}\eta_0),
\end{aligned}
\]
\[
N_{2} 
 =\norm{ e^{-\lambda_{+}T_{\eta}}\frac{|\xi|}{\sqrt{1+4A|\xi|^{2}}}\widehat{n_{0,\eta}} }_{L^{2}}
 \le  e^{-T_{\eta}}
\|n_{0,\eta}\|_{H^{s}} \le   \eta  
 \le \eta_{0},
\]
\[
\begin{aligned}
N_{3}&\le\frac{1}{2\sqrt{A}}\norm{\int_{0}^{T_{\eta}}e^{-\lambda_{-}(T_{\eta}-\tau)}|\xi| |\widehat{n \nabla c}|  \,\ud\tau}_{L^{2}(\{|\xi|\le\sqrt{2(A-1)}\}\cup \{|\xi|>\sqrt{2(A-1)}\})}
\\&\le  \frac{1}{2\sqrt{A}} \Biggr{(}\frac{ \sqrt{2(A-1)}  C_{d,s}}{\frac{(A-1)^{2}}{4A}\frac{2s-d}{2s}} 3^{\frac{4s-d}{2s}}(2\sqrt{2}C_{0})^{\frac{d}{2s}}\eta_{0}^{2}+9\frac{C_{2}}{\sqrt{e}} C_{1}  \eta_{0}^{2} \Biggr{)},
\end{aligned}
\] 
\[
\begin{aligned}
N_{4}&\le  \frac{1}{2\sqrt{A}} \norm{\int_{0}^{T_{**}}e^{-\lambda_{-}(T_{**}-\tau)}  |\xi||\widehat{n \nabla c}| \,\ud\tau}_{L^{2}(\{|\xi|\le\sqrt{2(A-1)}\}\cup \{|\xi|>\sqrt{2(A-1)}\})}
\\&\le  \frac{1}{2\sqrt{A}}\Biggr{(}  \frac{ \sqrt{2(A-1)}  C_{d,s}}{\frac{(A-1)^{2}}{4A}\frac{2s-d}{2s}} 3^{\frac{4s-d}{2s}}(2\sqrt{2}C_{0})^{\frac{d}{2s}}\eta_{0}^{2}+9\frac{C_{2}}{\sqrt{e}} C_{1}  \eta_{0}^{2}\Biggr{)}.
\end{aligned}
\]
Thus,
 \[
 \||\xi|\widehat{c}(\cdot,T_{\eta})\|_{L^{2}}\ge  \frac{\sqrt{A-A^{-1}}}{2\sqrt{9A^{2}-5}}\bke{1+\frac{9(A^{2}-1)}{16A}}^{-\frac{s}{2}}(C_{0}\eta_0)-\eta_{0}-\frac{1}{6}\eta_{0}\ge\frac{5\eta_{0}}{6}.
 \]
\qed 
\section{Conclusion}
In this work, we identified the critical threshold $A_{\mathrm{crit}}=1$ for the fully parabolic Keller--Segel system near constant steady states and established a complete dichotomy between the nonlinearly stable ($A \le A_{\mathrm{crit}}$) and unstable ($A > A_{\mathrm{crit}}$) regimes. As a byproduct, we also proved that in the setting $A=0$ and $d=2$, the system with $\gamma>0$ does not possess a critical mass $M_{\mathrm{crit}}$ that forces radial solutions to blow up in finite time, in contrast to the $\gamma=0$ case. Because our analytical approach appears broadly applicable to other variants of the model, we anticipate that these insights into critical parameters will help advance the theoretical understanding of Keller--Segel-type systems.
Future work may address whether the smallness assumptions imposed on the initial perturbations can be weakened, whether sharper asymptotic behavior can be derived, and whether the present stability--instability framework can be extended to bounded or periodic domains, applied near other steady states, or adapted for Keller--Segel systems with more general structures.
\appendix
\section{Appendix}
This appendix contains the proofs of several lemmas deferred from the main text.
\subsection{Proof of Lemma~\ref{REVLEM31}}\label{secA1}
Let $j=0$ or $1$.
To obtain \eqref{est_NABLAn_L2+}--\eqref{est_NABLAc_L2+}, we claim that
\begin{equation}\label{est_n_L2+}
	\begin{aligned}
		&\int_{\R^{d}} \||\xi| \widehat{n} \|_{L^2(0,T)} \,\ud \xi \leq   \| \widehat{n_0} \|_{L^1} +   \int_{\R^{d}}\bigr{(} \| \widehat{n \nabla c} \|_{L^2(0,T)}+A\| |\xi|\widehat{c}  \|_{L^2(0,T)}\bigr{)} \,\ud \xi,\\
		&\int_{\R^{d}} \| |\xi| \widehat{c} \|_{L^2(0,T)} \,\ud \xi \leq    \| |\xi|^{j}\widehat{c_0} \|_{L^1} +   \int_{\R^{d}} \| |\xi|\widehat{n} \|_{L^2(0,T)}\,\ud \xi.
	\end{aligned}
\end{equation}
Indeed, 
using $\eqref{FTROFEQ}_{1}$,  we can compute
\[
	\begin{aligned}
		\| |\xi|\widehat{n} \|_{L^2(0,T)} \leq \| e^{-|\xi|^2 t} |\xi|  \widehat{n_0}  \|_{L^2(0,T)} + \left\| \int_0^t e^{-|\xi|^2(t-\tau)} |\xi|^2 (|\widehat{n \nabla c}|+A|\xi|| \widehat{c} |) \,\ud \tau \right\|_{L^2(0,T)}.
	\end{aligned}
\]
We further compute
the first term on the right-hand side as
\[
\| e^{-|\xi|^2 t} |\xi|  \widehat{n_0}  \|_{L^2(0,T)} \le  |\widehat{n_0}|.
\]
Using Young's convolution inequality, we compute the rightmost term as
\[
 \left\| \int_0^t e^{-|\xi|^2(t-\tau)} |\xi|^2 (|\widehat{n \nabla c}|+A|\xi|| \widehat{c} |) \,\ud \tau \right\|_{L^2(0,T)}
\le   \| \widehat{n \nabla c} \|_{L^2(0,T)}+A\| |\xi|\widehat{c}  \|_{L^2(0,T)}.
 \]
Combining the above estimates and integrating over $\xi$, we have $\eqref{est_n_L2+}_{1}$. Similarly,  from $\eqref{FTROFEQ}_{2}$ we have
\[
	\begin{aligned}
		\| |\xi|\widehat{c} \|_{L^2(0,T)} \leq \| e^{-(1+|\xi|^2) t} |\xi|  \widehat{c_0}  \|_{L^2(0,T)}+ \left\| \int_0^t e^{-(1+|\xi|^2)(t-\tau)}  |\xi||\widehat{n} |  \,\ud \tau \right\|_{L^2(0,T)}.
	\end{aligned}
\]
A direct computation yields  
\[
\| e^{-(1+|\xi|^2) t} |\xi| \widehat{c_0}  \|_{L^2(0,T)}\le 
\left\{
\begin{aligned}  & \| e^{-|\xi|^2 t} |\xi| \widehat{c_0} \|_{L^2(0,T)}\le  |\widehat{c_0}|,\\
& \| e^{-t} |\xi| \widehat{c_0} \|_{L^2(0,T)}\le  |\xi||\widehat{c_0}|,
\end{aligned}
\right.
\]
and Young's convolution inequality gives
\[
\begin{aligned}
 \left\| \int_0^t e^{-(1+|\xi|^2)(t-\tau)}  |\xi||\widehat{n} |  \,\ud \tau \right\|_{L^2(0,T)}  \le   \| e^{-t} \|_{L^1(0,\infty)} \||\xi| \widehat{n} \|_{L^2(0,T)}
 \le  \||\xi| \widehat{n} \|_{L^2(0,T)}.
 \end{aligned}
\]
Combining the above estimates and integrating over $\xi$ shows $\eqref{est_n_L2+}_{2}$. Then,  \eqref{est_n_L2+} gives
\eqref{est_NABLAn_L2+}--\eqref{est_NABLAc_L2+}.

Next, to show \eqref{est_nc_L2+}, we denote  $*$ the convolution with respect to $\xi$, and note that 
\[
\widehat{n \nabla c} = \widehat{n} * (i\xi\widehat{c})=:J_1 + J_2 + J_3 + J_4,
\]
where 
\begin{equation*}
	\begin{aligned}
		J_1 &:= \left( e^{-|\xi|^2 t} \widehat{n_0} \right) * \left( e^{-(1+|\xi|^2) t} i\xi\widehat{c_0} \right), \\
		J_2 &:= \left( e^{-|\xi|^2 t} \widehat{n_0} \right) * \left( \int_0^t e^{-(1+|\xi|^2) (t-\tau)} i\xi\widehat{n} \,\ud \tau \right), \\
		J_3 &:= - \left( \int_0^t e^{-|\xi|^2(t-\tau)} i \xi \cdot (\widehat{n \nabla c}+A\xi\widehat{c}) \,\ud \tau \right) * \left( e^{-(1+|\xi|^2) t}i\xi \widehat{c_0} \right), \\
		J_4 &:= - \left( \int_0^t e^{-|\xi|^2(t-\tau)} i \xi \cdot (\widehat{n \nabla c}+A\xi\widehat{c}) \,\ud \tau \right) * \left( \int_0^t e^{-(1+|\xi|^2) (t-\tau)} i\xi\widehat{n} \,\ud \tau \right).
	\end{aligned}
\end{equation*}
A direct computation shows
\begin{equation*}
	\begin{aligned}
		\| J_1 \|_{L^2(0,T)} 
		&\le |\widehat{n_0}| *  \| e^{-(1+|\xi|^2) t} |\xi| \widehat{c_0}  \|_{L^2(0,T)} \\ 
		&\le  |\widehat{n_0}| * \left\{
\begin{aligned}  & \| e^{-|\xi|^2 t} |\xi| \widehat{c_0} \|_{L^2(0,T)}\le  |\widehat{c_0}|,\\
& \| e^{-t} |\xi| \widehat{c_0} \|_{L^2(0,T)}\le  |\xi||\widehat{c_0}|,
\end{aligned}
\right.
	\end{aligned}
\end{equation*} 
and Young's convolution inequality yields
\begin{equation*}
	\begin{aligned}
		\| J_2 \|_{L^2(0,T)} &\leq |\widehat{n_0}| * \left\| \int_0^t e^{-(t-\tau)} |\xi||\widehat{n}| \,\ud \tau \right\|_{L^2(0,T)} \leq  |\widehat{n_0}| * \| |\xi|\widehat{n} \|_{L^2(0,T)}. 
	\end{aligned}
\end{equation*}
Similarly, we can compute
\begin{equation*}
	\begin{aligned}
		\| J_3 \|_{L^2(0,T)} &\leq \left\| \int_0^t e^{-|\xi|^2(t-\tau)}  |\xi|(|\widehat{n \nabla c}|+A|\xi||\widehat{c}|) \,\ud \tau \right\|_{L^{\infty}(0,T)} *  \| e^{-(1+|\xi|^2) t} |\xi| |\widehat{c_0}|  \|_{L^2(0,T)} \\
		&\leq  (\| \widehat{n \nabla c} \|_{L^2(0,T)}+A\||\xi| \widehat{c} \|_{L^2(0,T)}) * \left\{
\begin{aligned}  & \| e^{-|\xi|^2 t} |\xi| \widehat{c_0} \|_{L^2(0,T)}\le  |\widehat{c_0}|,\\
& \| e^{-t} |\xi| \widehat{c_0} \|_{L^2(0,T)}\le  |\xi||\widehat{c_0}|,
\end{aligned}
\right.
	\end{aligned}
\end{equation*} and 
\begin{equation*}
	\begin{aligned}
		\| J_4 \|_{L^2(0,T)} &\leq \left\| \int_0^t e^{-|\xi|^2(t-\tau)} |\xi|   (|\widehat{n \nabla c}|+A|\xi||\widehat{c}|) \,\ud \tau \right\|_{L^{\infty}(0,T)} * \left\| \int_0^t e^{-(t-\tau)} |\xi||\widehat{n}| \,\ud \tau \right\|_{L^2(0,T)} \\
		&\leq  (\| \widehat{n \nabla c} \|_{L^2(0,T)}+A\| |\xi |\widehat{c}  \|_{L^2(0,T)}) * \| |\xi|\widehat{  n} \|_{L^2(0,T)}. 
	\end{aligned}
\end{equation*}
Combining the above estimates, integrating with respect to $\xi$, and using Young's convolution inequality,
we have \eqref{est_nc_L2+}.

Next, we use $\eqref{FTROFEQ}_{1}$ to see
\[
 \|\widehat{n}(\cdot,t) \|_{L^{1}}
\le \|\widehat{n_0}\|_{L^{1}} + \int_{\R^{d}}\int_0^t e^{-|\xi|^2(t-\tau)}  |\xi|  (|\widehat{n \nabla c}|+A|\xi||\widehat{  c}|) \,\ud \tau \,\ud \xi,
\]
and apply H\"older's inequality to compute the rightmost term as
\[
\begin{aligned}
\int_0^t e^{-|\xi|^2(t-\tau)} & |\xi|  (|\widehat{n \nabla c}|+A|\xi||\widehat{  c}|) \,\ud \tau
\\& \le \|e^{-|\xi|^2(t-\tau)}  |\xi| \|_{L^2(0,t)} ( \| \widehat{n \nabla c} \|_{L^2(0,t)}+A \||\xi| \widehat{c}  \|_{L^2(0,t)})
\\& \le | \widehat{n \nabla c} \|_{L^2(0,t)}+A \||\xi| \widehat{c}  \|_{L^2(0,t)}.
\end{aligned}
\]
Combining the above estimates gives \eqref{est_L1+}.

Next, to show $\eqref{est_L1++}$, from $\eqref{FTROFEQ}_{2}$  we observe that
\[
	\begin{aligned}
 \|\widehat{c}(\cdot,t)\|_{L^{1}} &\le \| e^{-(1+|\xi|^{2})t}\widehat{c_0}\|_{L^{1}} + \int_{\R^{d}}\int_0^t e^{-(1+|\xi|^2)(t-\tau)} |\widehat{ n}| \,\ud \tau \,\ud \xi
\\
&\le \|\widehat{c_0}\|_{L^{1}} + \int_{\R^{d}}\int_0^t e^{-(t-\tau)} |\widehat{ n}| \,\ud \tau \,\ud \xi
\\
 &\le \|\widehat{c_0}\|_{L^{1}} +  \sup_{\tau \le t}\|\widehat{n}(\cdot,\tau) \|_{L^{1}},
	\end{aligned}
\]
where we used Fubini's theorem and H\"older's inequality in the last line. This shows $\eqref{est_L1++}_{1}$. 

Similarly, we use $\eqref{FTROFEQ}_{2}$ to compute
\[
	\begin{aligned}
 \||\xi|\widehat{c}(\cdot,t)\|_{L^{1}} &\le \| e^{-(1+|\xi|^{2})t}|\xi||\widehat{c_0}|\|_{L^{1}} + \int_{\R^{d}}\int_0^t e^{-(1+|\xi|^2)(t-\tau)} |\xi||\widehat{ n}| \,\ud \tau \,\ud \xi
\\
&\le \||\xi|\widehat{c_0}\|_{L^{1}} + \int_{\R^{d}}  \| e^{-(t-\tau)} \|_{L^{2}(0,t)} \| |\xi| \widehat{ n}\|_{L^{2}(0,t)} \,\ud \xi
\\
&\le \||\xi|\widehat{c_0}\|_{L^{1}} + \int_{\R^{d}}  \| |\xi| \widehat{ n}\|_{L^{2}(0,t)} \,\ud \xi,
	\end{aligned}
\]
which yields $\eqref{est_L1++}_{2}$. \qed
\subsection{Proof of Remark~\ref{RMKL1}}\label{secA2}
Let $A=0$, and  $n_0$, $c_0$ are in $L^{1}(\R^{d})$ and nonnegative. Then, $n\ge0$, $c\ge0$ by the maximum principle. The time evolution formula for $\|n(\cdot,t)\|_{L^{1}}$ and $\|c(\cdot,t)\|_{L^{1}}$ are obtained by integrating the $n$- and $c$-equations over $\R^{d}$.
Using $(n,c)\in (L^{\infty}(0,\infty;L^{1}(\R^{d})))^{2}$ and
 $\| D^{m-1} f \|_{L^2}\lesssim \| f \|_{L^1}^{\frac{2}{d+2m}}\| D^{m} f \|_{L^2}^{1-\frac{2}{d+2m}}$ for all $f\in (L^{1}\cap H^{m})(\mathbb{R}^{d})$ instead of  \eqref{SEP121}, we can replace \eqref{SEP122} by $\| D^{m-1} n (\cdot,t)\|_{L^2} + \| D^{m}c (\cdot,t)\|_{L^2} \lesssim t^{-\frac{ 2(m-1)+d}{4}}$ for all $t>1$. Then, in a similar manner as in
  the part of the proof of Theorem~\ref{thm1}, one can obtain the desired result in Remark~\ref{RMKL1}. \qed

\subsection{Proof of Lemma~\ref{LEM42}}\label{secA3}
Let $t\le T<T_{\rm max}$.
Note from \eqref{PREdf_u} that 
\[
\hat{n}(\xi,t)=\sum_{j=+,-}e^{-\lambda_{j}t} \brk{ \widehat{u_0},a_{j} } \brk{ b_{j}, e_{1}}+  \int_0^t e^{-\lambda_{j}(t-\tau)} \brk{ i\xi \cdot\widehat{  (n \nabla c)}e_1, a_{j} } \brk{ b_{j},e_{1}} \,\ud \tau,
\]
where
\[
e^{-\lambda_{\pm}t} \brk{ \widehat{u_0},a_{\pm} } \brk{ b_{\pm}, e_{1}}=e^{-\lambda_{\pm}t} \bke{ \widehat{n_0}\frac{-2}{1\pm\sqrt{1+4|\xi|^{2}}} +\widehat{c_0} }\bke{\frac{\mp|\xi|^{2}}{\sqrt{1+4|\xi|^{2}}} }, 
\]
\[
\begin{aligned}
  \int_0^t e^{-\lambda_{\pm}(t-\tau)} &\brk{ i\xi \cdot\widehat{  (n \nabla c)}e_1, a_{\pm} } \brk{ b_{\pm},e_{1}} \,\ud \tau\\
&= \int_0^t e^{-\lambda_{\pm}(t-\tau)} \bke{ i\xi \cdot\widehat{  (n \nabla c)}\frac{-2}{1\pm\sqrt{1+4|\xi|^{2}}}  }\bke{\frac{\mp|\xi|^{2}}{\sqrt{1+4|\xi|^{2}}} }\,\ud \tau.
\end{aligned}
\]
A direct computation gives
\begin{equation}\label{nL1LABS}
\begin{aligned}
\abs{e^{-\lambda_{\pm}t} \brk{ \widehat{u_0},a_{\pm} } \brk{ b_{\pm}, e_{1}}} &=\abs{e^{-\lambda_{\pm}t} \bke{ \widehat{n_0}\frac{-2}{1\pm\sqrt{1+4|\xi|^{2}}} +\widehat{c_0} }\bke{\frac{\mp|\xi|^{2}}{\sqrt{1+4|\xi|^{2}}} }}
\\
&\lesssim  e^{-\lambda_{\pm}t} \bke{ |\widehat{n_0}| + ||\xi|\widehat{c_0}| },
\end{aligned}
\end{equation}
and
\begin{equation}\label{nL1NLABS}
\begin{aligned}
 &\abs{ \int_0^t e^{-\lambda_{\pm}(t-\tau)} \brk{ i\xi \cdot\widehat{  (n \nabla c)}e_1, a_{\pm} } \brk{ b_{\pm},e_{1}} \,\ud \tau}
 \\&\qquad=
\abs{\int_0^t e^{-\lambda_{\pm}(t-\tau)} \bke{ i\xi \cdot\widehat{  (n \nabla c)}\frac{-2}{1\pm\sqrt{1+4|\xi|^{2}}}  }\bke{\frac{\mp|\xi|^{2}}{\sqrt{1+4|\xi|^{2}}} }\,\ud \tau} 
\\&\qquad \lesssim
 \int_0^t e^{-\lambda_{\pm}(t-\tau)}   |\xi| |\widehat{  (n \nabla c)}|\,\ud \tau.
 \end{aligned}
\end{equation}
Now, we compute the linear part. Due to $\lambda_{\pm}\ge0$, we can see that
\begin{equation}\label{NL1L}
\begin{aligned}
\|e^{-\lambda_{\pm}t} \bke{ |\widehat{n_0}| + ||\xi|\widehat{c_0}| } \|_{L^{1}}
\lesssim \norm{\widehat{n_0}}_{L^{1}}+\norm{|\xi|\widehat{c_0}}_{L^{1}}.
\end{aligned}
\end{equation}
On the other hand, using
\begin{equation}\label{EXPDEC}
e^{-\lambda_{+}t}\le  e^{-\frac{|\xi|^{2}}{3}t},\qquad \quad
e^{-\lambda_{-}t}\le \left\{
\begin{aligned}
e^{-\frac{|\xi|^{4}}{3}t}\quad&\mbox{if}\quad |\xi| < 1,\\
e^{-\frac{|\xi|^{2}}{3}t}\quad&\mbox{if}\quad |\xi| > 1,
\end{aligned}
\right.
\end{equation}
H\"older's inequality and a direct computation, we obtain
\[
\begin{aligned}
\|e^{-\lambda_{\pm}t} \bke{ |\widehat{n_0}| + ||\xi|\widehat{c_0}|  } \|_{L^{1}}
 \lesssim \sum_{k=1,2}\|e^{-\frac{|\xi|^{2k}}{3}t} \bke{ |\widehat{n_0}| + ||\xi|\widehat{c_0}| } \|_{L^{1}}
  \lesssim \sum_{k=1,2}
t^{-\frac{d}{2k}}\bke{\norm{    \widehat{n_0}  }_{L^{\infty}}  + \norm{|\xi|\widehat{c_0} }_{L^{\infty}}}
\end{aligned}
\]
and thus, combining it with \eqref{NL1L}, it follows from \eqref{nL1LABS} that 
\begin{equation}\label{nL1LR}
\|e^{-\lambda_{\pm}t} \brk{ \widehat{u_0},a_{\pm} } \brk{ b_{\pm}, e_{1}}\|_{L^{1}} \lesssim (1+t)^{-\frac{d}{4}}\bke{\norm{    \widehat{n_0}  }_{L^{1}\cap L^{\infty}}  + \norm{|\xi|\widehat{c_0} }_{L^{1}\cap L^{\infty}}}.
\end{equation}
Similarly, we have
\begin{equation}\label{nLINFL}
\begin{aligned}
\|e^{-\lambda_{\pm}t} \brk{ \widehat{u_0},a_{\pm} } \brk{ b_{\pm}, e_{1}}\|_{L^{\infty}} &\lesssim 
\|e^{-\lambda_{\pm}t} \bke{ |\widehat{n_0}| + ||\xi|\widehat{c_0}| } \|_{L^{\infty}}
  \lesssim\bke{\norm{    \widehat{n_0}  }_{ L^{\infty}}  + \norm{|\xi|\widehat{c_0} }_{  L^{\infty}}}.
\end{aligned}
\end{equation}
Next, we compute the nonlinear part. By \eqref{EXPDEC}, we have that 
\[
\begin{aligned}
\norm {\int_0^t e^{-\lambda_{\pm}(t-\tau)}   |\xi| |\widehat{  (n \nabla c)}|\,\ud \tau}_{L^{1}}&\lesssim \sum_{k=1,2}\norm {\int_0^t e^{-\frac{|\xi|^{2k}}{3}(t-\tau)}   |\xi| |\widehat{  (n \nabla c)}|\,\ud \tau}_{L^{1}}.
\end{aligned}
\]
Using  H\"older's inequality, Young's convolution inequality, and a direct computation, we further compute the right-hand side as
\[
\begin{aligned}
 &\sum_{k=1,2}\norm{\int_0^\frac{t}{2} e^{-\frac{|\xi|^{2k}}{3}(t-\tau)}   |\xi| |\widehat{  (n \nabla c)}|\,\ud \tau}_{L^{1}}
\\&\lesssim  \sum_{k=1,2}\int_0^\frac{t}{2} \|e^{-\frac{|\xi|^{2k}}{3}(t-\tau)}   |\xi|\|_{L^{1}} \norm{\hat{n} }_{L^{\infty}}\norm{|\xi| \hat{c}}_{L^{1}}\,\ud \tau
\\&\lesssim \sum_{k=1,2} \norm{\hat{n} }_{L^{\infty}_{\xi,t}}\|(1+t)^{\frac{d+1}{4}}|\xi| \hat{c}(t)\|_{L^{1}_{\xi}L^{\infty}_{t}}\int_0^\frac{t}{2} (t-\tau)^{-\frac{d+1}{2k}}(1+\tau)^{-\frac{d+1}{4}}\,\ud \tau
\\&\lesssim \sum_{k=1,2} t^{-\frac{d}{2k}}\norm{\hat{n} }_{L^{\infty}_{\xi,t}}\|(1+t)^{\frac{d+1}{4}}|\xi| \hat{c}(t)\|_{L^{1}_{\xi}L^{\infty}_{t}}\int_0^\frac{t}{2} (t-\tau)^{-\frac{1}{2k}}(1+\tau)^{-\frac{d+1}{4}}\,\ud \tau
\\&\lesssim  \sum_{k=1,2}  t^{-\frac{d}{2k}}\norm{\hat{n} }_{L^{\infty}_{\xi,t}}\|(1+t)^{\frac{d+1}{4}}|\xi| \hat{c}(t)\|_{L^{1}_{\xi}L^{\infty}_{t}}
\end{aligned}
\]
and
 \[
\begin{aligned}
 &\sum_{k=1,2}\norm{\int_\frac{t}{2}^{t} e^{-\frac{|\xi|^{2k}}{3}(t-\tau)}   |\xi| |\widehat{  (n \nabla c)}|\,\ud \tau}_{L^{1}}
\\&\lesssim  \sum_{k=1,2}\int_\frac{t}{2}^{t} \|e^{-\frac{|\xi|^{2k}}{3}(t-\tau)}   |\xi|\|_{L^{\infty}} \norm{\hat{n} }_{L^{1}}\norm{|\xi| \hat{c}}_{L^{1}}\,\ud \tau
\\&\lesssim  \sum_{k=1,2} \|(1+t)^{\frac{d}{4}} \hat{n}(t)\|_{L^{1}_{\xi}L^{\infty}_{t}}\|(1+t)^{\frac{d+1}{4}}|\xi| \hat{c}(t)\|_{L^{1}_{\xi}L^{\infty}_{t}}\int_\frac{t}{2}^{t} (t-\tau)^{-\frac{1}{2k}}(1+\tau)^{-\frac{2d+1}{4}}\,\ud \tau
\\&\lesssim \sum_{k=1,2} t^{-\frac{d}{4}}\|(1+t)^{\frac{d}{4}} \hat{n}(t)\|_{L^{1}_{\xi}L^{\infty}_{t}}\|(1+t)^{\frac{d+1}{4}}|\xi| \hat{c}(t)\|_{L^{1}_{\xi}L^{\infty}_{t}}\int_\frac{t}{2}^{t} (t-\tau)^{-\frac{1}{2k}}(1+\tau)^{-\frac{d+1}{4}}\,\ud \tau
\\&\lesssim  t^{-\frac{d}{4}}\|(1+t)^{\frac{d}{4}} \hat{n}(t)\|_{L^{1}_{\xi}L^{\infty}_{t}}\|(1+t)^{\frac{d+1}{4}}|\xi| \hat{c}(t)\|_{L^{1}_{\xi}L^{\infty}_{t}}.
\end{aligned}
\]
Combining it with 
\[
\begin{aligned}
 &\sum_{k=1,2}\norm{\int_0^t e^{-\frac{|\xi|^{2k}}{3}(t-\tau)}   |\xi| |\widehat{  (n \nabla c)}|\,\ud \tau}_{L^{1}}
\\&\lesssim  \sum_{k=1,2}\int_0^t \|e^{-\frac{|\xi|^{2k}}{3}(t-\tau)}   |\xi|\|_{L^{\infty}} \norm{\hat{n} }_{L^{1}}\norm{|\xi| \hat{c}}_{L^{1}}\,\ud \tau
\\&\lesssim \sum_{k=1,2} \|(1+t)^{\frac{d}{4}} \hat{n}(t)\|_{L^{1}_{\xi}L^{\infty}_{t}}\|(1+t)^{\frac{d+1}{4}}|\xi| \hat{c}(t)\|_{L^{1}_{\xi}L^{\infty}_{t}}\int_0^t (t-\tau)^{-\frac{1}{2k}}(1+\tau)^{-\frac{2d+1}{4}}\,\ud \tau
\\&\lesssim  \|(1+t)^{\frac{d}{4}} \hat{n}(t)\|_{L^{1}_{\xi}L^{\infty}_{t}}\|(1+t)^{\frac{d+1}{4}}|\xi| \hat{c}(t)\|_{L^{1}_{\xi}L^{\infty}_{t}}
\end{aligned}
\]  
and \eqref{nL1NLABS}, we have
\begin{equation}\label{nL1NL}
\begin{aligned}
&\norm{ \int_0^t e^{-\lambda_{\pm}(t-\tau)} \brk{ i\xi \cdot\widehat{  (n \nabla c)}e_1, a_{\pm} } \brk{ b_{\pm},e_{1}} \,\ud \tau}_{L^{1}} 
\\&\qquad\lesssim (1+t)^{-\frac{d}{4}}\bke{\norm{\hat{n} }_{L^{\infty}_{\xi,t}}+\|(1+t)^{\frac{d}{4}} \hat{n}(t)\|_{L^{1}_{\xi}L^{\infty}_{t}}}\|(1+t)^{\frac{d+1}{4}}|\xi| \hat{c}(t)\|_{L^{1}_{\xi}L^{\infty}_{t}}.
\end{aligned}
\end{equation}
Similarly, we can compute
\[
\begin{aligned}
\norm {\int_0^t e^{-\lambda_{\pm}(t-\tau)}   |\xi| |\widehat{  (n \nabla c)}|\,\ud \tau}_{L^{\infty}}&\lesssim \sum_{k=1,2}\norm {\int_0^t e^{-\frac{|\xi|^{2k}}{3}(t-\tau)}   |\xi| |\widehat{  (n \nabla c)}|\,\ud \tau}_{L^{\infty}},
\end{aligned}
\]
and
\[
\begin{aligned}
 &\sum_{k=1,2}\norm{\int_0^t e^{-\frac{|\xi|^{2k}}{3}(t-\tau)}   |\xi| |\widehat{  (n \nabla c)}|\,\ud \tau}_{L^{\infty}}
\\&\lesssim  \sum_{k=1,2}\int_0^t \|e^{-\frac{|\xi|^{2k}}{3}(t-\tau)}   |\xi|\|_{L^{\infty}} \norm{\hat{n} }_{L^{\infty}}\norm{|\xi| \hat{c}}_{L^{1}}\,\ud \tau
\\&\lesssim \sum_{k=1,2}\norm{\hat{n} }_{L^{\infty}_{\xi,t}}\|(1+t)^{\frac{d+1}{4}}|\xi| \hat{c}(t)\|_{L^{1}_{\xi}L^{\infty}_{t}}\int_0^t (t-\tau)^{-\frac{1}{2k}}(1+\tau)^{-\frac{d+1}{4}} \,\ud \tau 
\\&\lesssim \norm{\hat{n} }_{L^{\infty}_{\xi,t}}\|(1+t)^{\frac{d+1}{4}}|\xi| \hat{c}(t)\|_{L^{1}_{\xi}L^{\infty}_{t}}
\end{aligned}
\]
and thus,
\begin{equation}\label{nLINFNL}
 \norm{ \int_0^t e^{-\lambda_{\pm}(t-\tau)} \brk{ i\xi \cdot\widehat{  (n \nabla c)}e_1, a_{\pm} } \brk{ b_{\pm},e_{1}} \,\ud \tau}_{L^{\infty}}
  \lesssim   \norm{\hat{n} }_{L^{\infty}_{\xi,t}}\|(1+t)^{\frac{d+1}{4}}|\xi| \hat{c}(t)\|_{L^{1}_{\xi}L^{\infty}_{t}}.
\end{equation}
Combining \eqref{nL1LR}, \eqref{nLINFL}, \eqref{nL1NL}, and  \eqref{nLINFNL}, we have the desired results. 
\qed
\subsection{Proof of Lemma~\ref{LEM43}}\label{secA4}
Let $t\le T<T_{\rm max}$.
Note that  
\[
|\xi|\hat{c}(\xi,t)=|\xi|\sum_{j=+,-}\brk{ \hat{u},a_{j} } \brk{ b_{j} , e_{2}},
\]
\[
|\xi|e^{-\lambda_{\pm}t} \brk{ \widehat{u_0},a_{\pm} } \brk{ b_{\pm}, e_{2}}=e^{-\lambda_{\pm}t} \bke{ \widehat{n_0}\frac{-2|\xi|}{1\pm\sqrt{1+4|\xi|^{2}}} +|\xi|\widehat{c_0} }\bke{\frac{1\pm\sqrt{1+4|\xi|^{2}}}{\pm2\sqrt{1+4|\xi|^{2}}} }, 
\]
and
\[
\begin{aligned}
  |\xi|\int_0^t &e^{-\lambda_{\pm}(t-\tau)} \brk{ i\xi \cdot\widehat{  (n \nabla c)}e_1, a_{\pm} } \brk{ b_{\pm},e_{2}} \,\ud \tau\\
&= \int_0^t e^{-\lambda_{\pm}(t-\tau)} |\xi|\bke{ i\xi \cdot\widehat{  (n \nabla c)}\frac{-2}{1\pm\sqrt{1+4|\xi|^{2}}}  }\bke{\frac{1\pm\sqrt{1+4|\xi|^{2}}}{\pm2\sqrt{1+4|\xi|^{2}}} }\,\ud \tau.
\end{aligned}
\]
First, we compute the linear part. Since $\lambda_{\pm}\ge0$, we  have
\begin{equation}\label{CL1L}
 \norm{ e^{-\lambda_{\pm}t} \bke{ \widehat{n_0}\frac{-2|\xi|}{1\pm\sqrt{1+4|\xi|^{2}}} +|\xi|\widehat{c_0} }\bke{\frac{1\pm\sqrt{1+4|\xi|^{2}}}{\pm2\sqrt{1+4|\xi|^{2}}} }}_{L^{1}}
 \lesssim \norm{\widehat{n_0}}_{L^{1}}+\norm{|\xi|\widehat{c_0}}_{L^{1}}.
\end{equation}
Moreover, using H\"older's inequality and \eqref{EXPDEC}, we have
\[
\begin{aligned}
&\norm{ e^{-\lambda_{+}t} \bke{ \widehat{n_0}\frac{-2|\xi|}{1+\sqrt{1+4|\xi|^{2}}} +|\xi|\widehat{c_0} }\bke{\frac{1+\sqrt{1+4|\xi|^{2}}}{2\sqrt{1+4|\xi|^{2}}} }}_{L^{1}}
\\&\qquad\qquad\qquad\qquad\lesssim \|e^{-\frac{|\xi|^{2}}{3}t}\|_{L^{1}}(\norm{\widehat{n_0}}_{L^{\infty}}+\norm{|\xi|\widehat{c_0}}_{L^{\infty}})
\\&\qquad\qquad\qquad\qquad\lesssim t^{-\frac{d}{2}}(\norm{\widehat{n_0}}_{L^{\infty}}+\norm{|\xi|\widehat{c_0}}_{L^{\infty}}),
\end{aligned}
\]
\[
\begin{aligned}
&\norm{ e^{-\lambda_{-}t} \bke{ \widehat{n_0}\frac{-2|\xi|}{1-\sqrt{1+4|\xi|^{2}}} +|\xi|\widehat{c_0} }\bke{\frac{1-\sqrt{1+4|\xi|^{2}}}{-2\sqrt{1+4|\xi|^{2}}} }}_{L^{1}} 
\\&\qquad\qquad\qquad\qquad\lesssim \sum_{k=1,2}\|e^{-\frac{|\xi|^{2k}}{3}t}|\xi|\|_{L^{1}}(\norm{\widehat{n_0}}_{L^{\infty}}+\norm{|\xi|\widehat{c_0}}_{L^{\infty}})
\\&\qquad\qquad\qquad\qquad \lesssim \sum_{k=1,2}t^{-\frac{d+1}{2k}}(\norm{\widehat{n_0}}_{L^{\infty}}+\norm{|\xi|\widehat{c_0}}_{L^{\infty}}).
\end{aligned}
\]
Thus, combining it with \eqref{CL1L}, we have
\begin{equation}\label{CL1LR}
\||\xi|e^{-\lambda_{\pm}t} \brk{ \widehat{u_0},a_{\pm} } \brk{ b_{\pm}, e_{2}}\|_{L^{1}}\lesssim (1+t)^{-\frac{d+1}{4}}(\norm{\widehat{n_0}}_{L^{1}\cap L^{\infty}}+\norm{|\xi|\widehat{c_0}}_{L^{1}\cap L^{\infty}}).
\end{equation}
Next, we consider the nonlinear part. Note that
\[
\begin{aligned}
\abs{\int_0^t e^{-\lambda_{\pm}(t-\tau)} |\xi|\bke{ i\xi \cdot\widehat{  (n \nabla c)}\frac{-2}{1\pm\sqrt{1+4|\xi|^{2}}}  }\bke{\frac{1\pm\sqrt{1+4|\xi|^{2}}}{\pm2\sqrt{1+4|\xi|^{2}}} }\,\ud \tau}
\\\lesssim  \int_0^t e^{-\lambda_{\pm}(t-\tau)}   \frac{|\xi|^{2}}{\sqrt{1+4|\xi|^{2}}} |\widehat{  (n \nabla c)}|\,\ud \tau.
\end{aligned}
\]
 Due to \eqref{EXPDEC}, we have that 
\[
\begin{aligned}
\norm {\int_0^t e^{-\lambda_{\pm}(t-\tau)}   \frac{|\xi|^{2}}{\sqrt{1+4|\xi|^{2}}} |\widehat{  (n \nabla c)}|\,\ud \tau}_{L^{1}}&\lesssim \sum_{k=1,2}\norm {\int_0^t e^{-\frac{|\xi|^{2k}}{3}(t-\tau)} \frac{|\xi|^{2}}{\sqrt{1+4|\xi|^{2}}} |\widehat{  (n \nabla c)}|\,\ud \tau}_{L^{1}}.
\end{aligned}
\]
Using  H\"older's inequality, Young's convolution inequality, and a direct computation, we further compute the right-hand-side as
\[
\begin{aligned}
&\sum_{k=1,2}\norm{\int_0^\frac{t}{2} e^{-\frac{|\xi|^{2k}}{3}(t-\tau)}   \frac{|\xi|^{2}}{\sqrt{1+4|\xi|^{2}}} |\widehat{  (n \nabla c)}|\,\ud \tau}_{L^{1}}
\\&\qquad\lesssim  \sum_{k=1,2}\int_0^\frac{t}{2} \|e^{-\frac{|\xi|^{2k}}{3}(t-\tau)}   |\xi|^{2}\|_{L^{1}} \norm{\hat{n} }_{L^{\infty}}\norm{|\xi| \hat{c}}_{L^{1}}\,\ud \tau
\\&\qquad\lesssim \sum_{k=1,2} \norm{\hat{n} }_{L^{\infty}_{\xi,t}}\|(1+t)^{\frac{d+1}{4}}|\xi| \hat{c}(t)\|_{L^{1}_{\xi}L^{\infty}_{t}}\int_0^\frac{t}{2} (t-\tau)^{-\frac{d+2}{2k}}(1+t)^{-\frac{d+1}{4}}\,\ud \tau
\\&\qquad\lesssim \sum_{k=1,2} t^{-\frac{d+1}{2k}}\norm{\hat{n} }_{L^{\infty}_{\xi,t}}\|(1+t)^{\frac{d+1}{4}}|\xi| \hat{c}(t)\|_{L^{1}_{\xi}L^{\infty}_{t}}\int_0^\frac{t}{2} (t-\tau)^{-\frac{1}{2k}}(1+t)^{-\frac{d+1}{4}}\,\ud \tau
\\&\qquad\lesssim  \sum_{k=1,2}  t^{-\frac{d+1}{2k}}\norm{\hat{n} }_{L^{\infty}_{\xi,t}}\|(1+t)^{\frac{d+1}{4}}|\xi| \hat{c}(t)\|_{L^{1}_{\xi}L^{\infty}_{t}},
\end{aligned}
\]
and 
 \[
\begin{aligned}
&\sum_{k=1,2}\norm{\int_\frac{t}{2}^{t} e^{-\frac{|\xi|^{2k}}{3}(t-\tau)}   \frac{|\xi|^{2}}{\sqrt{1+4|\xi|^{2}}} |\widehat{  (n \nabla c)}|\,\ud \tau}_{L^{1}}
\\&\qquad\lesssim  \sum_{k=1,2}\int_\frac{t}{2}^{t} \|e^{-\frac{|\xi|^{2k}}{3}(t-\tau)}   |\xi|^{k}\|_{L^{\infty}} \norm{\hat{n} }_{L^{1}}\norm{|\xi| \hat{c}}_{L^{1}}\,\ud \tau
\\&\qquad\lesssim    \|(1+t)^{\frac{d}{4}} \hat{n}(t)\|_{L^{1}_{\xi}L^{\infty}_{t}}\|(1+t)^{\frac{d+1}{4}}|\xi| \hat{c}(t)\|_{L^{1}_{\xi}L^{\infty}_{t}}\int_\frac{t}{2}^{t} (t-\tau)^{-\frac{1}{2}}(1+\tau)^{-\frac{2d+1}{4}}\,\ud \tau
\\&\qquad\lesssim   t^{-\frac{d+1}{4}}\|(1+t)^{\frac{d}{4}} \hat{n}(t)\|_{L^{1}_{\xi}L^{\infty}_{t}}\|(1+t)^{\frac{d+1}{4}}|\xi| \hat{c}(t)\|_{L^{1}_{\xi}L^{\infty}_{t}}\int_\frac{t}{2}^{t} (t-\tau)^{-\frac{1}{2}}(1+\tau)^{-\frac{d}{4}}\,\ud \tau
\\&\qquad\lesssim  t^{-\frac{d+1}{4}}\|(1+t)^{\frac{d}{4}} \hat{n}(t)\|_{L^{1}_{\xi}L^{\infty}_{t}}\|(1+t)^{\frac{d+1}{4}}|\xi| \hat{c}(t)\|_{L^{1}_{\xi}L^{\infty}_{t}}.
\end{aligned}
\]
Combining it with 
\[
\begin{aligned}
 &\sum_{k=1,2}\norm{\int_0^t e^{-\frac{|\xi|^{2k}}{3}(t-\tau)}   \frac{|\xi|^{2}}{\sqrt{1+4|\xi|^{2}}} |\widehat{  (n \nabla c)}|\,\ud \tau}_{L^{1}}
\\&\qquad\lesssim  \sum_{k=1,2}\int_0^t \|e^{-\frac{|\xi|^{2k}}{3}(t-\tau)}   |\xi|\|_{L^{\infty}} \norm{\hat{n} }_{L^{1}}\norm{|\xi| \hat{c}}_{L^{1}}\,\ud \tau
\\&\qquad\lesssim \sum_{k=1,2} \|(1+t)^{\frac{d}{4}} \hat{n}(t)\|_{L^{1}_{\xi}L^{\infty}_{t}}\|(1+t)^{\frac{d+1}{4}}|\xi| \hat{c}(t)\|_{L^{1}_{\xi}L^{\infty}_{t}}\int_0^t (t-\tau)^{-\frac{1}{2k}}(1+t)^{-\frac{2d+1}{4}}\,\ud \tau
\\&\qquad\lesssim  \|(1+t)^{\frac{d}{4}} \hat{n}(t)\|_{L^{1}_{\xi}L^{\infty}_{t}}\|(1+t)^{\frac{d+1}{4}}|\xi| \hat{c}(t)\|_{L^{1}_{\xi}L^{\infty}_{t}},
\end{aligned}
\]  
we have
\begin{equation}\label{CL1NL}
\begin{aligned}
&\norm{\int_0^t e^{-\lambda_{\pm}(t-\tau)}   \frac{|\xi|^{2}}{\sqrt{1+4|\xi|^{2}}} |\widehat{  (n \nabla c)}|\,\ud \tau}_{L^{1}} \\
&\qquad\lesssim (1+t)^{-\frac{d+1}{4}}\bke{\norm{\hat{n} }_{L^{\infty}_{\xi,t}}+\|(1+t)^{\frac{d}{4}} \hat{n}(t)\|_{L^{1}_{\xi}L^{\infty}_{t}}}\|(1+t)^{\frac{d+1}{4}}|\xi| \hat{c}(t)\|_{L^{1}_{\xi}L^{\infty}_{t}}.
\end{aligned}
\end{equation}
Therefore, by \eqref{CL1LR} and \eqref{CL1NL},  we can conclude the desired estimate. 
\qed
\subsection{Proof of Lemma~\ref{LININSTA}}\label{secA5}
For given $\eta\in(0,\eta_0)$, we   take  $ \theta\in(0,1)$ close to $1$ so that \eqref{THETATETA} holds. 
Due to the continuity of $\lambda_{-}(|\xi|)$ and \eqref{LAMML}, we can find 
\begin{equation}\label{EPSTHETA}
\varepsilon=\varepsilon(\theta)\in\biggr{(}0, \frac{1}{2}\sqrt{\frac{A^{2}-1}{4A}}\biggr{)}
\end{equation} such that
\begin{equation}\label{LAMMLL}
\lambda_{-}(|\xi|)\le -\theta\frac{ (A-1)^2 }{4A  }\quad\mbox{ if }\quad  \biggr{|}|\xi|- \sqrt{\frac{A^{2}-1}{4A}} \biggr{|}\le \varepsilon.
\end{equation}
We denote $\sigma_{d}=|\partial B_{1}(0)|$.
With such $\varepsilon$, we set 
\[
n_{0,\eta}=\eta C_{\theta}\mathscr{F}^{-1}\biggr{(}\chi_{\Bigr{\{} \abs{ |\xi|- \sqrt{\frac{A^{2}-1}{4A}}  }\le \varepsilon\Bigr{\}}}\biggr{)},\qquad C_{\theta}= \displaystyle\Biggr{(}\sigma_{d}\int_{\sqrt{\frac{A^{2}-1}{4A}} -\varepsilon}^{\sqrt{\frac{A^{2}-1}{4A}} +\varepsilon}(1+r^{2})^{s}r^{d-1}\,\ud r\Biggr{)}^{-\frac{1}{2}},
\]
and 
$c_{0,\eta}\equiv 0$.
Then, 
\begin{equation}\label{NOHSNORM}
\begin{aligned}
\|n_{0,\eta}\|_{H^{s}}&=\eta C_{\theta}\Bigr{(}\int_{\R^{d}}(1+|\xi|^{2})^{s}|\widehat{n_{0,\eta}}(\xi)|^{2}\,\ud\xi\Bigr{)}^{\frac{1}{2}}=\eta,
\end{aligned}
\end{equation}
and $\|c_{0,\eta}\|_{H^{s+1}}=0$. Moreover, we can compute the lower and upper bounds of 
\[
\begin{aligned}
\|e^{-\lambda_{-}t} \langle \widehat{u_{0,\eta}},a_{-} \rangle \langle b_{-},e_{1} \rangle  \|_{L^{2}}&=\norm{ e^{-\lambda_{-}t}\frac{-2}{1-\sqrt{1+4A|\xi|^2{}}}\frac{A|\xi|^{2}}{\sqrt{1+4A|\xi|^{2}}}\eta C_{\theta}\chi_{\Bigr{\{} \abs{ |\xi|- \sqrt{\frac{A^{2}-1}{4A}}  }\le \varepsilon\Bigr{\}}}
 }_{L^{2}}
 \\&=\norm{ e^{-\lambda_{-}t}\frac{1+\sqrt{1+4A|\xi|^{2}}}{2\sqrt{1+4A|\xi|^{2}}}\eta C_{\theta}\chi_{\Bigr{\{} \abs{ |\xi|- \sqrt{\frac{A^{2}-1}{4A}}  }\le \varepsilon\Bigr{\}}}
 }_{L^{2}}
\\
\end{aligned}
\] 
as
\[
\begin{aligned}
e^{\frac{\theta(A-1)^{2}}{4A}}\| \langle \widehat{u_{0,\eta}},a_{-} \rangle \langle b_{-},e_{1} \rangle  \|_{L^{2}}
&=\norm{ e^{\theta\frac{(A-1)^{2}}{4A}t}\frac{1+\sqrt{1+4A|\xi|^{2}}}{2\sqrt{1+4A|\xi|^{2}}}\eta C_{\theta}\chi_{\Bigr{\{} \abs{ |\xi|- \sqrt{\frac{A^{2}-1}{4A}}  }\le \varepsilon\Bigr{\}}}
 }_{L^{2}}
\\& \le \|e^{-\lambda_{-}t} \langle \widehat{u_{0,\eta}},a_{-} \rangle \langle b_{-},e_{1} \rangle  \|_{L^{2}}
\\& \le \norm{ e^{\frac{(A-1)^{2}}{4A}t}\frac{1+\sqrt{1+4A|\xi|^{2}}}{2\sqrt{1+4A|\xi|^{2}}}\eta C_{\theta}\chi_{\Bigr{\{} \abs{ |\xi|- \sqrt{\frac{A^{2}-1}{4A}}  }\le \varepsilon\Bigr{\}}}
 }_{L^{2}}
\\&  = 
 e^{\frac{(A-1)^{2}}{4A}}\| \langle \widehat{u_{0,\eta}},a_{-} \rangle \langle b_{-},e_{1} \rangle  \|_{L^{2}},
 \end{aligned}
\]
where we used \eqref{LAMML} and \eqref{LAMMLL}. This completes the proof.
\qed

\ackn

JA acknowledges support of the National Research Foundation (NRF) of Korea (Grant No. 
RS-2024-00336346 and RS-2025-24523482). The work of JK was supported by the National Research Foundation of Korea(NRF) grant funded by the Korea government(MSIT) (No. RS-2024-00360798).

\end{document}